\def\Sclo{\overrightarrow{\Sigma^*}}
\def\U{\mathcal{U}}
\def\A{\mathcal{A}}
\DeclareMathOperator\pref{pref} 
\DeclareMathOperator\suff{suff} 
\DeclareMathOperator\red{red}
\newcommand{\Mpres}[2]{\operatorname{Mon}\bigl\langle #1\:|\:#2 \bigr\rangle}
\newcommand{\Mgen}[1]{\operatorname{Mon}\bigl\langle #1 \bigr\rangle}
\newcommand{\Ggen}[1]{\operatorname{Gp}\bigl\langle #1 \bigr\rangle}
\newcommand{\Gpres}[2]{\operatorname{Gp}\bigl\langle #1\:|\:#2 \bigr\rangle}
\newcommand{\Ipres}[2]{\operatorname{Inv}\bigl\langle #1\:|\:#2 \bigr\rangle}
\newcommand{\Igen}[1]{\operatorname{Inv}\bigl\langle #1 \bigr\rangle}
\newcommand{\N}{\mathbb{N}}
\newcommand{\dedge}[1]{\ar@{--}[#1]}
\newcommand{\edge}[1]{\ar@{-}[#1]}
\newcommand{\lulab}[1]{\ar@{}[l]_<<{#1}}
\newcommand{\rulab}[1]{\ar@{}[r]^<<{#1}}
\newcommand{\ldlab}[1]{\ar@{}[l]^<<{#1}}
\newcommand{\rdlab}[1]{\ar@{}[r]_<<{#1}}
 \newcommand{\OHare}{\mathcal{O}}
\newtheorem{thm}{Theorem}[section]
\newtheorem{cor}[thm]{Corollary}
\newtheorem{prop}[thm]{Proposition}
\newtheorem{lem}[thm]{Lemma}
\theoremstyle{definition}
\newtheorem{defn}[thm]{Definition}
\newtheorem{remark}[thm]{Remark}
\newtheorem{question}[thm]{Question}
\newtheorem{example}[thm]{Example}
\newcommand{\lb}{\langle}
\newcommand{\rb}{\rangle}
\newcommand{\restr}{\!\restriction}
\begin{document}


\title[Groups of units of one-relator inverse monoids]{
On groups of units of special and \\ one-relator inverse monoids
}


\author{ROBERT D. GRAY}
\email{Robert.D.Gray@uea.ac.uk}

\address{School of Mathematics, University of East Anglia, Norwich NR4 7TJ, England, UK}

\author{NIK RU\v{S}KUC}
\email{nik.ruskuc@st-andrews.ac.uk}

\address{School of Mathematics and Statistics, University of St Andrews, St Andrews KY16 9SS, Scotland, UK.}


\subjclass[2010]{20M05, 20F05, 20M18}

\keywords{One-relator monoid, 
one-relator group, 
inverse monoid, 
special inverse monoid, 
units, right units, coherence.
\newline
\indent This research of R.\ D. Gray was supported by the EPSRC-funded projects 
EP/N033353/1 ``Special inverse monoids: subgroups, structure, geometry, rewriting systems and the word problem'', and 
EP/V032003/1 `Algorithmic, topological and geometric aspects of infinite groups, monoids and inverse semigroups'. 
}


\begin{abstract}
We investigate the groups of units of one-relator and special inverse monoids. These are inverse monoids which are defined by presentations where all the defining relations are of the form $r=1$. We develop new approaches for finding presentations for the group of units of a special inverse monoid, and apply these methods to give conditions under which the group admits a presentation with the same number of defining relations as the monoid. In particular our results give sufficient conditions for the group of units of a one-relator inverse monoid to be a one-relator group. When these conditions are satisfied these results give inverse semigroup theoretic analogues of classical results of Adjan for one-relator monoids, and Makanin for special monoids. In contrast, we show that in general these classical results do not hold for one-relator and special inverse monoids. In particular, we show that there exists a one-relator special inverse monoid whose group of units is not a one-relator group (with respect to any generating set), and we show that there exists a finitely presented special inverse monoid whose group of units is not finitely presented.
\end{abstract}


\maketitle

\section{Introduction and summary of results}
\label{sec:intro}

The purpose of this paper is to undertake a systematic investigation into the groups of units of one-relator and special inverse monoids.
Throughout there is a particular emphasis on comparing and contrasting these groups with groups of units of special monoids,
after the work of Adjan, Makanin, Lallement and Zhang, 
as well as with one-relator groups themselves; see 
\cite{adjan,Lallement1974,makanin,zhang:rewriting}.

The word problem for semigroups defined by a single relation is one of the oldest, most famous and most elementary to state open problems in the broad area of combinatorial algebra.
It asks whether for every monoid of the form $M=\Mpres{A}{u=v}$, i.e. defined by generators $A$ and a single defining relation $u=v$, there is an algorithm which for any two words $w_1,w_2\in A^\ast$ decides whether or not $w_1=w_2$ in $M$.
In his seminal work on the subject in the 1960s and 70s, Adjan proved, among other things, the following:
\begin{enumerate}[leftmargin=10mm,itemsep=1mm,label=\textup{(A\arabic*)}]
\item
\label{it:A1}
The word problem is soluble for every \emph{one-relator} special monoid, i.e. monoid defined by $M=\Mpres{A}{r=1}$;
see \cite{adjan}.
\item
\label{it:A2}
The word problem is soluble for every monoid of the form $M=\Mpres{A}{u=v}$, where $u,v$ are non-empty words whose first 
letters are distinct and last letters are distinct; see \cite{adjan}.
\item
\label{it:A3}
If the word problem is soluble for all monoids of the form $M=\Mpres{A}{au=av}$ where $a\in A$, and the final letters of $u$ and $v$ are distinct, then the word problem for \emph{all} one relation monoids is soluble; 
with Oganesjan \cite{adog78}.
\end{enumerate}

Adjan's proof of \ref{it:A1} and some subsequent developments are of particular relevance for our work, and we outline them here.
The key feature is a focus on the group of units. Specifically, suppose
 $r$ is decomposed as $r\equiv r_1\dots r_k$, where $r_i$ are non-empty and invertible and of shortest possible length subject to this requirement; we will call 
 such $r_i$ the \emph{minimal invertible pieces} of $r$. Then the following holds:

\begin{thm}[Adjan \cite{adjan}]
\label{thm:MonPiecesGenerate}
If $M=\Mpres{A}{r=1}$, and if $r=r_1\dots r_k$ is the decomposition into minimal invertible pieces, then the group of units
$U=U(M)$ of $M$ is generated by $\{r_1,\dots,r_k\}$. Furthermore, if $B$ is an alphabet in one-one correspondence
$r_i\mapsto \overline{r}_i$ with the set $\{r_1,\dots, r_k\}$, then the group presentation
$\Gpres{B}{\overline{r}_1\dots \overline{r}_n=1}$ defines $U$ with respect to this generating set.
\end{thm}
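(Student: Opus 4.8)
The plan is to first pin down the combinatorial meaning of the minimal invertible pieces, then prove generation, and finally upgrade generation to the claimed presentation. Since every defining relation has the form $r=1$, equality in $M$ is the Thue congruence $\thue$ generated by inserting and deleting copies of $r$, and I would begin by analysing the invertible prefixes of $r$. As $\epsilon$ and $r$ both represent $1$, and a prefix $p$ of $r$ with $r \equiv pq$ is invertible if and only if $q$ is (since $pq=1$ in $M$ forces $q=p^{-1}$), the invertible prefixes form a chain $\epsilon = p_0 < p_1 < \dots < p_k = r$ ordered by length, and setting $p_i \equiv p_{i-1} r_i$ recovers exactly the decomposition $r \equiv r_1\cdots r_k$ into minimal invertible pieces. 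Each $r_i = p_{i-1}^{-1}p_i$ is a product of two units, hence a unit, and minimality of $r_i$ is precisely the absence of an invertible prefix of $r$ strictly between $p_{i-1}$ and $p_i$. In particular $r_1\cdots r_k = r = 1$ in $U$, so the assignment $\ol{r}_i \mapsto r_i$ extends to a homomorphism $\phi\colon \Gpres{B}{\ol{r}_1\cdots\ol{r}_k = 1} \to U$, and the whole theorem amounts to showing that $\phi$ is an isomorphism.

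For surjectivity (generation), note that every element of $U$ is represented by a positive word $w\in A^\ast$, so it suffices to show that each such $w$ equals, in $M$, a product of the $r_i^{\pm 1}$. I would argue by induction on the number of relator applications in a shortest derivation $ww' \thue \epsilon$ certifying that $w$ represents a unit (where $w'$ is a positive representative of $w^{-1}$, so that $w'w\thue\epsilon$ as well); equivalently, on the area of a minimal van Kampen-type diagram for this equality. The combinatorial heart is a \emph{cut-point} lemma: in such a minimal certificate each inserted or deleted copy of $r$ meets the factor $w$ along boundaries that are themselves invertible prefixes of $r$, so that after reduction the relator can only ever split $w$ at positions corresponding to the $p_i$. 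This forces $w$ to factor through the pieces, and peeling off one $r_i^{\pm 1}$ strictly reduces the complexity, closing the induction. This is where essentially all the difficulty lies: one must rule out invertibility being ``created internally'' at a non-piece boundary, which is exactly the content of Adjan's overlap analysis and is the main obstacle.

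It remains to prove injectivity, i.e.\ that every relation among the $r_i$ holding in $U$ is already a consequence of $\ol{r}_1\cdots\ol{r}_k = 1$. Given a word $W$ over $B^{\pm 1}$ with $\phi(W)=1$ in $M$, I substitute the representatives $r_i^{\pm 1}$ and take a minimal derivation reducing the resulting word to $\epsilon$. Applying the same cut-point analysis as above, every application of the defining relator along this derivation is aligned with the piece boundaries $p_i$, so each elementary $\thue$-step projects, under $r_i \mapsto \ol{r}_i$, to an application of the single group relator $\ol{r}_1\cdots\ol{r}_k$ (or its inverse or a cyclic conjugate). Reading the derivation back inside $\Gpres{B}{\ol{r}_1\cdots\ol{r}_k = 1}$ then exhibits $W$ as a product of conjugates of $\ol{r}_1\cdots\ol{r}_k$, whence $W=1$ there and $\phi$ is injective. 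Combining the three steps yields that $U$ is generated by $\{r_1,\dots,r_k\}$ and that $U \cong \Gpres{B}{\ol{r}_1\cdots\ol{r}_k = 1}$ with respect to this generating set, as required.
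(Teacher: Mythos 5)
First, note that the paper itself contains no proof of this theorem: it is quoted from Adjan \cite{adjan} (with Zhang's rewriting-system treatment \cite{zhang:rewriting,zhang:short} cited as a later simplification), so your attempt can only be measured against those classical arguments. Your preliminary analysis is correct and matches the standard set-up: the invertible prefixes of $r$ form a chain $\epsilon \equiv p_0 < p_1 < \dots < p_k \equiv r$, the gaps $r_i$ with $p_i \equiv p_{i-1}r_i$ are exactly the minimal invertible pieces, each $r_i$ is a unit, and the whole theorem reduces to showing that the induced homomorphism $\phi\colon \Gpres{B}{\overline{r}_1\cdots\overline{r}_k=1}\to U$ is an isomorphism.

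The difficulty is that your ``cut-point lemma'' is asserted rather than proved, and it is not a routine verification: it \emph{is} the theorem. You acknowledge this, but the gap is worse than a missing technical step, for two reasons. First, as stated the lemma is not even well-formed: in a Thue derivation witnessing $ww'\thue\epsilon$, copies of $r$ may be inserted inside earlier insertions and may overlap one another, so ``where a copy of $r$ meets the factor $w$'' has no meaning without substantial extra machinery (Adjan's cancellation analysis, a van Kampen-type diagram theory for monoids, or Zhang's complete rewriting system, each of which takes real work to set up). What one actually needs, and what Adjan's overlap analysis delivers, is that the minimal pieces form an overlap-free biprefix code and that the positive words representing right-invertible elements of $M$ are exactly the products of prefixes of pieces; nothing in your sketch supplies this, and it is precisely the point where one-relator is essential --- Example~\ref{ex:AdjanFailsForNonOR} in the paper shows the analogous alignment claim fails for general special presentations. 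Second, your injectivity step has an additional unaddressed problem: $r_i^{-1}$ is not a word over $A$, so you must choose positive words $s_i$ representing the inverses of the pieces and run the derivation analysis on words built from the $r_i$ and $s_i$; your cut-point lemma, which speaks only of copies of $r$ meeting a positive word $w$, says nothing about such derivations, and the projection of an elementary $\thue$-step to an application of $\overline{r}_1\cdots\overline{r}_k$ again requires the code property (distinct pieces could a priori coincide in $M$, which is exactly the subtlety separating Adjan's one-relator statement from Makanin's general one, Theorem~\ref{thm:SpecialMakanin}). As written, then, your proposal is a correct reduction of the theorem to its hardest ingredient, not a proof of it.
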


It follows then that the group of units of $M$ is a one-relator group, and hence has a soluble word problem by a classical result of Magnus; see \cite{lyndonschupp, mks}.
It can be shown that the submonoid of right units $R$ of $M$ is the free product of $U$ and a free monoid of finite rank, and hence it is also finitely presented and has a soluble word problem. A further normal form theorem can be proved that reduces the word problem for  
the entire monoid $M$ to those of $U$ and $R$, and thus $M$ has a soluble word problem too.

Makanin \cite{makanin} generalises this approach to special monoids, i.e. monoids defined by presentations of the form
$M=\Mpres{A}{r_i=1\ (i\in I)}$. This time we decompose each $r_i$ into minimal invertible pieces $r_i\equiv r_{i1}\dots r_{ik_i}$.
Furthermore, we assign to each piece $r_{ij}$ a new letter $\overline{r}_{ij}$, but this time in such a way that
$\overline{r}_{ij}=\overline{r}_{lm}$ whenever $r_{ij}=r_{lm}$ \emph{in the monoid} $M$.
Collecting these new letters into the alphabet $B$ we have:

\begin{thm}[Makanin \cite{makanin}]
\label{thm:SpecialMakanin}
The group of units of $M=\Mpres{A}{r_i=1\ (i\in I)}$ is generated by the set
$\{ r_{ij}\::\: i\in I,\ 1\leq j\leq k_i\}$, and, in terms of these generators, is defined by the presentation
$\Gpres{B}{\overline{r}_{i1}\dots\overline{r}_{ik_i}=1\ (i\in I)}$.
\end{thm}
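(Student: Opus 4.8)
The plan is to produce the obvious candidate isomorphism and then verify bijectivity, the three ingredients being (easy) that the stated relations hold, (moderate) that the pieces generate, and (hard) that there are no further relations. Write $[w]$ for the image in $M$ of a word $w\in A^*$, and set $G=\Gpres{B}{\overline{r}_{i1}\cdots\overline{r}_{ik_i}=1\ (i\in I)}$. Since each $r_{ij}$ is by construction an invertible piece, $[r_{ij}]\in U$, so there is a map $\phi\colon F(B)\to U$ sending $\overline{r}_{ij}\mapsto [r_{ij}]$; it descends to $\phi\colon G\to U$ because each defining relator maps to $[r_{i1}]\cdots[r_{ik_i}]=[r_i]=1$, and the convention $\overline{r}_{ij}=\overline{r}_{lm}$ whenever $[r_{ij}]=[r_{lm}]$ makes the assignment on letters consistent. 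It then suffices to show $\phi$ is bijective.

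For surjectivity (the generation statement) I would argue by induction on length that every invertible word is equal in $M$ to a product of minimal invertible pieces and their inverses. Given an invertible $w$, apply a length-reducing complete rewriting system for the special monoid (in the spirit of Zhang \cite{zhang:rewriting}, or the overlap analysis behind \ref{thm:MonPiecesGenerate}); each rewrite replaces an occurrence of some relator $r_i$, and the key inductive claim is that the portions of $w$ consumed in this process are always concatenations of minimal invertible pieces. Minimality is what prevents a rewrite from biting into the interior of a piece, so the factorisation of $w$ into pieces is preserved throughout, and surjectivity of $\phi$ follows.

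Injectivity is the crux: every relation holding among the pieces in $U$ must be a consequence of the relators $\overline{r}_{i1}\cdots\overline{r}_{ik_i}=1$. Take a reduced word $\overline{w}$ in $F(B)$ with $\phi(\overline{w})=1$, so the corresponding product of pieces equals $1$ in $M$, and pass to a van Kampen-type diagram over the monoid presentation whose boundary spells this product and whose two-cells are the relators $r_i=1$. The decisive combinatorial claim is that such a diagram can be \emph{coarsened} to a diagram over the group presentation $G$: because a minimal invertible piece has no nontrivial invertible proper factorisation, the seams along which two-cells are glued, and the subdivision points on the boundary, must fall at piece boundaries rather than cutting through a piece. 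Collapsing each relator two-cell to a single $G$-relation and each maximal piece-segment to a single letter of $B$ then produces a van Kampen diagram witnessing $\overline{w}=1$ in $G$.

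I expect this coarsening step, in the genuinely multi-relator setting, to be the main obstacle. Pieces originating in different relators $r_i$ and $r_l$ may overlap, and one must show that any such overlap forces an honest equality $[r_{ij}]=[r_{lm}]$ in $M$ --- precisely the content of the identification convention built into $B$ --- rather than a partial overlap that would destroy the piece-boundary alignment. Controlling these cross-relator overlaps, and checking that the diagram surgery terminates while respecting piece boundaries at every stage, is where the real effort goes; by comparison the generation statement and the verification that the given relations hold are essentially formal.
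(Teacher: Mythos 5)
A preliminary remark: the paper does not prove Theorem~\ref{thm:SpecialMakanin} at all --- it is quoted from Makanin \cite{makanin}, with Zhang's rewriting treatment \cite{zhang:rewriting,zhang:short} cited as a later simplification --- so there is no in-paper proof to compare against, and your proposal must stand on its own. On its own terms it is a programme rather than a proof, and the two places where you yourself locate ``the real effort'' are genuine gaps, not routine verifications. In the surjectivity step, the claim that ``minimality is what prevents a rewrite from biting into the interior of a piece'' is false at the level of raw derivations: an equality in $M$ is witnessed by elementary steps in \emph{both} directions, and an insertion $1\to r_l$ may occur anywhere, in particular mid-piece. Appealing instead to a length-reducing \emph{complete} rewriting system is circular here: the construction of such a system for a general special monoid (Zhang) is built on top of the very structure theory --- that the words representing right units form a free submonoid on a biprefix code of minimal pieces, i.e.\ the overlap analysis behind property $(\mathcal{A})$ in Section~\ref{sec:Benois} --- that the generation statement encapsulates. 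That lemma is where surjectivity actually lives, and your sketch asserts it rather than proves it.

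The injectivity step has a further, concrete defect beyond the admitted incompleteness of the coarsening claim: the boundary of your diagram does not spell a product of pieces. A reduced word $\overline{w}\in F(B)$ involves inverse letters $\overline{r}_{ij}^{\,-1}$, and in the monoid $M$ the inverse of the unit represented by $r_{ij}$ is represented by some word $s_{ij}\in A^*$ about which the piece-boundary combinatorics says nothing --- $s_{ij}$ need not decompose into pieces, need not avoid overlaps with relators, and is not even canonically chosen. The classical arguments handle this through the structure of the monoids of right and left units and associated normal forms, precisely the machinery your diagram surgery bypasses. Moreover, the ``decisive combinatorial claim'' that seams and boundary subdivision points can always be pushed to piece boundaries is, in effect, equivalent to the theorem itself (it is exactly the statement that all relations among the pieces are consequences of the relators $\overline{r}_{i1}\cdots\overline{r}_{ik_i}=1$), so deferring it to a hoped-for coarsening procedure, with termination and cross-relator overlaps unexamined, leaves the crux unestablished. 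The paper's own Example~\ref{ex:AdjanFailsForNonOR} is a useful warning about how badly na\"{i}ve overlap reasoning can mislead in the multi-relator setting: relators with no word-level overlaps can still conceal invertibility, which is why the identification $\overline{r}_{ij}=\overline{r}_{lm}$ is imposed for equality \emph{in $M$} and cannot be detected by inspecting the diagram's seams alone.
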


It follows that the group of units of a special monoid can be defined by a presentation with no more defining relations than the original presentation for $M$. 

A key property of the decompositions into minimal invertible pieces is that they do not overlap with each other. This in fact yields an algorithm for computing the decomposition into minimal invertible pieces for one-relator monoids; see Lallement \cite{Lallement1974}.
The algorithm proceeds by finding the overlaps of $r$ with itself, forming certain cyclic conjugates of $r$, and then repeating this process.
This algorithm also makes sense in the context of special monoids too, and it computes some decomposition of the relators into invertible pieces. It turns out, however, that these pieces need not be minimal in general (see Example~\ref{ex:AdjanFailsForNonOR}).
All the results surveyed above concerning special monoids were revisited and simplified by Zhang \cite{zhang:rewriting,zhang:short} using the methodology of string rewriting systems.

In a separate strand, the 1990s and early 2000s saw  a dynamic development of the theory of presentations for inverse monoids. The catalyst for this was Stephen's discovery \cite{stephen90} of an algorithmic procedure, akin to the Todd--Coxeter algorithm from combinatorial group theory, which computes the so called Sch\"{u}tzenberger graph of an inverse monoid $M=\Ipres{A}{R}$ based at an arbitrary word $w\in \overline{A}^\ast$. 
Linking this development with \ref{it:A3}, Ivanov, Margolis and Meakin \cite{imm01} showed that
if all one-relator inverse monoids $\Ipres{A}{r=1}$, 
where $r$ is a reduced word, 
have soluble word problems then this would imply the same for the 
monoids of the form $M=\Mpres{A}{au=av}$, and hence for all one-relation monoids.
Relating to this, it was recently shown in \cite{gray:undecidable} that, in general, not 
every one-relator inverse monoid $\Ipres{A}{r=1}$ has soluble word problem. 
However the question of solubility of the word problem in the case that $r$ is a reduced word is still open.  
In their paper \cite{imm01} Ivanov, Margolis and Meakin  
also essentially proved the generation part of the analogue of Theorem \ref{thm:SpecialMakanin} for special inverse monoids:

\begin{thm}[Ivanov, Margolis, Meakin {\cite[Proposition 4.2]{imm01}}]
\label{IMM:gens}
Suppose
\[
M=\Ipres{A}{r_i=1 \ (i\in I)}
\]
 is a special inverse monoid, and that each relator $r_i$ is decomposed into
minimal invertible pieces $r_i\equiv r_{i1}\dots r_{ik_i}$. Then the set $\{ r_{ij}\::\: i\in I,\ 1\leq j\leq k_i\}$ generates the group of units of $M$.
\end{thm}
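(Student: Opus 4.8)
The plan is to prove the two inclusions $\gen{\{r_{ij}\}} \subseteq U(M)$ and $U(M) \subseteq \gen{\{r_{ij}\}}$ separately, where $\gen{\cdot}$ denotes the subgroup of the group of units generated by a subset. The first is immediate: by construction each piece $r_{ij}$ is invertible, so it lies in $U(M)$ and therefore so does the subgroup it generates. The entire content is the reverse inclusion, namely that every unit is a product of pieces and their inverses. Throughout I work in the free inverse monoid $\mathrm{FIM}(A)$ over $\ol A = A\cup A^{-1}$, with $M$ its quotient by the congruence generated by the relations $r_i=1$, and I write $[w]$ for the image in $M$ of a word $w\in\ol A^\ast$. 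A useful preliminary is the routine characterisation that $[w]$ is a unit if and only if $ww^{-1}=1$ \emph{and} $w^{-1}w=1$ in $M$; in particular every unit is both a right and a left unit.

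The first genuinely useful observation is a prefix lemma for a single relator: because the pieces are obtained greedily as shortest invertible prefixes, a prefix $p$ of $r_i$ represents a unit if and only if $p=r_{i1}\cdots r_{ij}$ for some $0\leq j\leq k_i$. This follows by induction on $j$. The empty prefix is the unit $1$; and once $r_{i1}\cdots r_{i,j-1}$ is known to be a unit, any longer prefix $p=(r_{i1}\cdots r_{i,j-1})x$ is a unit if and only if $x$ is, since units form a group. By the minimality built into the decomposition, $r_{ij}$ is the shortest nonempty invertible prefix of $r_{ij}\cdots r_{ik_i}$, so the next invertible prefix after $r_{i1}\cdots r_{i,j-1}$ is exactly $r_{i1}\cdots r_{ij}$, with none in between. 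Consequently each relator, read from left to right, meets the maximal subgroup $\mathscr{H}$-class of $1$ precisely at its piece boundaries, and the factor read between two consecutive boundaries is a single piece.

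To pass from prefixes of relators to an arbitrary unit I would invoke Stephen's Sch\"utzenberger graph $\mathcal S = S\Gamma(1)$ of the identity. Its vertex set is the $\mathscr R$-class of $1$, namely the right units of $M$, with base vertex $\alpha_0=[1]$; Stephen's iterative procedure builds $\mathcal S$ by repeatedly sewing in each relator $r_i$ as a closed path at a vertex (legitimate since $r_i=1$) and folding the result into a deterministic inverse automaton. The group of units is then recovered as the Sch\"utzenberger group of the $\mathscr H$-class of $1$, and the key point is that this group is generated by the loops that the sewn-in relators trace out at $\alpha_0$. Granting this, the prefix lemma of the previous paragraph shows that each relator loop passes through the $\mathscr H$-class of $1$ exactly at its piece boundaries, and hence factors as the product of the piece-moves $r_{i1},\dots,r_{ik_i}$. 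Therefore the pieces generate $U(M)$.

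The main obstacle is precisely this penultimate step: showing that the maximal subgroup at $1$ is generated by the relator-derived loops, and that nothing further is needed. This is a van Kampen / fundamental-groupoid type statement about $\mathcal S$, and it is exactly the point at which the inverse setting is harder than Makanin's monoid argument. In $M$ a word may be rewritten not only by deleting relators but also by the Wagner moves $uu^{-1}u\leftrightarrow u$, so a derivation witnessing that $w$ is a unit need not display any prefix structure at all. The real work is to control the interaction between the relator insertions and these free inverse-monoid foldings, and organising the derivations via the folded Sch\"utzenberger graph, rather than directly on words, is what makes the generation statement tractable.
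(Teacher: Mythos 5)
There is a genuine gap, and you have flagged it yourself: the entire generation statement is reduced to the claim that the maximal subgroup at $1$ is generated by the relator-derived loops, and this claim is explicitly ``granted'' rather than proved. A proof whose central step is an acknowledged obstacle is not a proof. Moreover, your formulation of that missing step is off-target. You ask for a van Kampen / fundamental-groupoid statement about loops traced by relators \emph{at the base vertex} $\alpha_0$, but in Stephen's procedure relators are sewn in at \emph{every} vertex of the approximating graphs, and the Sch\"utzenberger group of the $\mathscr{H}$-class of $1$ is not the fundamental group of $S\Gamma(1)$, so no statement of this shape is what the argument needs. What is actually needed is weaker and elementary: (i) by Stephen's iterative construction, every vertex of $S\Gamma(1)$ arises by attaching a relator loop to a previously constructed vertex, whence the monoid $R(M)$ of right units is generated by the prefixes of the relators $r_i$; and (ii) the complement $R(M)\setminus U(M)$ is a two-sided ideal of $R(M)$ (cf. \cite[Lemma~3.5]{gray:undecidable}, used again in the proof of Theorem~\ref{thm_GeneralConstructionOneSided}\ref{it:constiv}), so if a product of these prefix-generators equals a unit, then every factor is itself a unit. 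Hence $U(M)$ is generated by the \emph{invertible} prefixes of the relators, and your prefix lemma --- the one part of your proposal that is correct and complete --- identifies these with the products $r_{i1}\cdots r_{ij}$, so the pieces generate. No control of ``the interaction between relator insertions and Wagner foldings'' beyond this ideal-complement trick is required.

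It is also worth noting that the paper itself does not reprove this statement: its ``proof'' consists of citing \cite[Proposition~4.2]{imm01} and observing that the hypothesis there that the $r_i$ be cyclically reduced is never used in the Ivanov--Margolis--Meakin argument, so that argument establishes the theorem as stated. Your proposal, by contrast, attempts a self-contained proof; to complete it you would essentially have to reconstruct steps (i) and (ii) above, which is precisely the content of the cited proposition.
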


\begin{proof}
The authors state their theorem under the assumption that all $r_i$ are cyclically reduced words. However, on inspection, this condition is not used in the proof, and hence their proof in fact establishes the theorem as stated here.
\end{proof}

However, there has been no further development along the Adjan/Makanin/Zhang lines, toward establishing presentation properties of the group of units. In fact, partly to hint at the difficulties that such an attempt would entail,
Margolis, Meakin and Stephen \cite{OHarePaper} introduce the following specific one-relator monoid
\[
\OHare=\Ipres{a,b,c,d}{abcdacdadabbcdacd = 1}=\Ipres{a,b,c,d}{r = 1},
\]
which has since come to be known as the O'Hare monoid. It is defined by a single, positive relator $r$ (and hence, in particular, it is $E$-unitary). 
The relator has no overlaps with itself, so the Adjan overlap algorithm would terminate instantly, and return $r$ as a single invertible piece of itself. However, using van Kampen diagrams and Stephen's procedure, 
Margolis and Meakin succeed in showing that $r$ in fact has a finer decomposition, namely 
\[
r\equiv abcd\cdot 
acd\cdot
ad\cdot
abbcd\cdot
acd.
\]
They provide no further information about the group of units.

Motivated by the developments described above, in this paper we investigate the extent to which the Adjan/Makanin/Zhang results for groups of units, and monoids of right units, of one-relator and special monoids are also valid for one-relator and special inverse monoids. 
In a nutshell, we will see that they do often, but not always. Exploration of when they do will lead us to a general theorem, and several applications in concrete situations with extra assumptions. On the negative side, we will see that the results may fail to generalise to inverse semigroups for each of two possible reasons: the group defined by the natural presentation on the minimal invertible pieces may not be isomorphic to the group of units, and there are inherent structural difficulties in attempting to compute the minimal pieces.

To elaborate a bit further, we consider the question of whether the group of units of a one-relator inverse monoid is a one-relator group.
In the positive direction, in Section \ref{sec:Makanin} we prove the following general sufficient condition for this to be the case:

\theoremstyle{plain}
\newtheorem*{thmUcopy}{Theorem \ref{thm:Usandwiched}}

\newcommand{\thmUcopyText}{
Let
$M= \Ipres{A}{r_i=1 \; (i \in I)}$
 be a special inverse monoid, let
$r_{i}\equiv r_{i 1}r_{i 2}\ldots r_{i k_i} $
be a factorisation into invertible pieces for $i\in I$, and
let $H$ be the subgroup of the free group
$F_A$
generated by all the pieces
$r_{ij} \; ( i\in I, 1\leq j\leq k_{i} ) $.
Fix an isomorphism $\phi :F_Y \rightarrow H$ and set
\[
K = \Gpres{Y}{
\phi^{-1}(r_{i 1}) \phi^{-1}(r_{i 2})
\ldots 
\phi^{-1}(r_{i k_i}) = 1 \; (i \in I) 
}.
\]
If $\phi$ induces an embedding of the group $ K$ into the group $\Gpres{A}{r_i=1 \; (i \in I)}$
defined by the same presentation as $M$,
then $\phi $ induces an isomorphism between $K$
and the subgroup of the monoid $M$ generated by the pieces.
}

\begin{thmUcopy}
\label{thm:Usandwiched}
\thmUcopyText
\end{thmUcopy}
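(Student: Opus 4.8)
The plan is to realise $K$ as \emph{sandwiched} between the subgroup of $M$ generated by the pieces and the group $G=\Gpres{A}{r_i=1\ (i\in I)}$, exploiting the fact that both map naturally onto a common subgroup of $G$. Let $\eta\colon\overline{A}^*\to M$ be the canonical homomorphism, let $U=U(M)$ be the group of units, and let $V=\gen{\eta(r_{ij}):i\in I,\,1\le j\le k_i}\le U$ be the subgroup generated by the pieces. Since $G$ is a group satisfying all the relations $r_i=1$, there is a canonical surjective homomorphism $\sigma\colon M\to G$ with $\sigma(\eta(w))=[w]_G$ for every word $w$, and by hypothesis the induced map $\bar\phi\colon K\to G$, $y\mapsto[\phi(y)]_G$, is injective. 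I would then produce a \emph{surjective} homomorphism $\beta\colon K\to V$ satisfying $\sigma|_V\circ\beta=\bar\phi$; since $\bar\phi$ is injective this forces $\beta$ to be injective, hence an isomorphism $K\cong V$, which is exactly the asserted isomorphism induced by $\phi$.

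The construction of $\beta$ is the heart of the matter, and the obstacle is that there is in general no homomorphism $F_A\to M$: a generator $a$ need not be a unit, so $\eta$ does not factor through free reduction. The key technical step, which I expect to be the main difficulty, is therefore a compatibility lemma: the assignment $r_{ij}\mapsto\eta(r_{ij})$ extends to a well-defined group homomorphism $\rho\colon H\to U$ sending a reduced word $h\in H\le F_A$ to $\eta(h)$; equivalently, if a product of pieces and inverse-pieces freely reduces in $F_A$ to a reduced word $h$, then $\eta$ of that product equals $\eta(h)$. I would prove this by reducing the product one junction at a time and showing each block cancellation preserves the value of $\eta$. Concretely, if $\eta(u)$ is a unit and $u=u'Z$, $v=Z^{-1}v'$, then writing $s=\eta(u')$ and $f=\eta(ZZ^{-1})$ one has $sfs^{-1}=1$; as $f\le 1$ this gives $ss^{-1}=1$, whence $sfs^{-1}=ss^{-1}$ yields $f\ge s^{-1}s$ and therefore $sf=ss^{-1}sf=s(s^{-1}s)=s$, so that $\eta(uv)=sf\,\eta(v')=s\,\eta(v')=\eta(u'v')$. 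Since each partial reduced product again has unit image, this induction goes through and yields $\rho$. Throughout I would take the $r_i$, hence the pieces, to be reduced words, so that $\rho(r_{ij})=\eta(r_{ij})$ with no discrepancy between a piece and its free reduction.

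With $\rho$ in hand, set $\beta=\rho\circ\phi$ on generators, $y\mapsto\eta(\phi(y))$; this is a homomorphism into the group $U$. It kills each defining relator of $K$, since $\phi\bigl(\phi^{-1}(r_{i1})\cdots\phi^{-1}(r_{ik_i})\bigr)=r_{i1}\cdots r_{ik_i}$ is literally the word $r_i$, whence $\beta\bigl(\phi^{-1}(r_{i1})\cdots\phi^{-1}(r_{ik_i})\bigr)=\prod_j\eta(r_{ij})=\eta(r_i)=1$ in $M$. Thus $\beta$ descends to $\beta\colon K\to V$, and its image is $\gen{\eta(\phi(y)):y\in Y}=\rho(H)=V$, so $\beta$ is onto. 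Finally $\sigma\circ\beta$ and $\bar\phi$ are homomorphisms $K\to G$ agreeing on the generators $Y$, because $\sigma(\beta(y))=\sigma(\eta(\phi(y)))=[\phi(y)]_G=\bar\phi(y)$; hence $\sigma|_V\circ\beta=\bar\phi$, and the sandwich argument of the first paragraph completes the proof. The only genuinely delicate point is the cancellation lemma of the second paragraph, where the inverse-monoid structure (the behaviour of the idempotents $\eta(ZZ^{-1})$ relative to a unit image) must be used; the remainder is a formal diagram chase.
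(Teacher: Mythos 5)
Your proposal is correct and follows essentially the same route as the paper's proof: the same commuting triangle of canonical maps $\overline{A}^\ast \to M \to G$, an epimorphism from $K$ onto the subgroup of $M$ generated by the pieces (the paper's $\eta$, your $\beta$), and injectivity forced by the hypothesised embedding of $K$ into $G$; moreover your compatibility lemma $\rho\colon H \to U$, including the idempotent computation with $f=\eta(ZZ^{-1})$ (which is sound, using only right invertibility of the prefix), is precisely the content of the paper's Lemma \ref{lem_good}, which the paper cites as known rather than re-deriving. The one deviation is your standing assumption that the $r_i$, hence the pieces, are reduced words: the theorem makes no such assumption (the paper explicitly needs it for non-reduced pieces such as $xx^{-1}$), so you should instead, as the paper does, take $H=\Ggen{\red(r_{ij})}$ and use Lemma \ref{lem_good}\ref{it:lg3} --- or your own one-step cancellation argument, which applies verbatim to reductions internal to a single piece --- to get $\eta(r_{ij})=\eta(\red(r_{ij}))$, after which your argument goes through unchanged; as a minor point in your favour, working directly with the subgroup $V$ generated by the pieces matches the statement exactly without the appeal the paper makes to Theorem \ref{IMM:gens}.
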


We note that this result is valid for special inverse monoids in general, and that it enables one to reduce the original question to a question purely referring to the corresponding groups and their subgroups. 
In particular, our questions concerning one-relator inverse monoids are reduced to questions about subgroups of one-relator groups.
We therefore go on to prove a number of results in which we identify situations
where the conditions of the theorem are met. As a consequence, in Section \ref{sec:Examples} we exhibit several families of one-relator inverse monoids whose groups of units are all one-relator.  In particular, we  are able to prove that the group of units of the O'Hare monoid $\OHare$ is the free group of rank $2$.

It is natural to ask whether the assumptions of Theorem \ref{thm:Usandwiched} are perhaps always satisfied, or at least always in the case of one-relator presentations.
The answer to this is negative, and in order to show this we give a general construction in Section \ref{sec:Construction}
which we then apply in 
Section \ref{sec:applications} to demonstrate the following:
\begin{itemize}[leftmargin=5mm,itemsep=1mm]
\item
There exists a one-relator special inverse monoid whose group of units is not one-relator (with respect to any generating set).
\item
There exists a finitely presented special inverse monoid whose group of units is not finitely presented.
\item
There exists a one-relator special inverse monoid with finitely presented group of units, and finitely generated but non-finitely presented submonoid of right units. 
\item 
There exists a one-relator special inverse monoid whose submonoid of right units is not a free product of the groups of units and a free monoid (this follows from the previous point). 
\end{itemize}
Also, using these results, at the end of  
Section~\ref{sec:applications} we will present several results which  
show the close relationship between 
the question of 
finite presentability of the groups of units units of special one-relator inverse monoids, and an open problem of Baumslag \cite[page 76]{Baumslag73}
which asks whether every one-relator group is coherent. 
In particular we show that the problem of proving coherence of all one-relator groups is equivalent to the problem of showing that the group of units of  
an $E$-unitary one-relator special inverse monoid is always finitely presented. 

Throughout the paper, the decompositions of relators into invertible pieces play a pivotal role, and we discuss them in their own right in Section \ref{sec:Benois}. Applying a result of Narendran et al. \cite{Narendran91} we observe that, unlike the situation in one-relator monoids, for general special monoids there is no algorithm to compute the decomposition into minimal invertible pieces. The existence of such an algorithm for one-relator special inverse monoids remains open. However, we present an algorithm which does compute a decomposition into invertible pieces. The decomposition computed by this algorithm is always at least as fine as that computed by the Adjan overlap algorithm, and, in fact, in all the examples known to us in the one-relator case, including the O'Hare monoid $\OHare$, it computes the decomposition into minimal invertible pieces.

\section{Preliminaries}
\label{sec:prelim}

For an alphabet $A$, we will denote the \emph{free monoid} over $A$ by $A^\ast$.
It consists of all words over $A$, including the empty word $\epsilon=\epsilon_A$.
Since throughout words will play a somewhat ambivalent role of elements in a free monoid, as well as those in a monoid defined by a presentation, we will denote equality of words in $A^\ast$ by $u\equiv v$.

A \emph{monoid presentation} is a pair $\Mpres{A}{R}$ where $A$ is an alphabet and $R\subseteq A^\ast\times A^\ast$ is a set of pairs of words. This presentation defines the monoid $A^\ast/\rho$, where $\rho$ is the congruence
on $A^\ast$ generated by $R$. We will write $M=\Mpres{A}{R}$.
A typical relation $(u,v)\in R$ is usually written $u=v$, and we extend this to pairs in
$\rho$. Thus, in the context of the monoid defined by a presentation, $u=v$ means that $u$ and $v$,
interpreted as products of generators, represent the same element of $M$.
The monoid $M$ is a natural homomorphic image of the free monoid $A^\ast$ via the mapping $w\mapsto w/\rho$,
called the \emph{natural homomorphism}.
The \emph{group of units} $U(M)$ of a monoid $M$ consists of all elements $x\in M$ which are (two-sided) invertible,
i.e. for which there exists $y\in M$ such that $xy=yx=1_M$.
A presentation $\Mpres{A}{R}$ is called \emph{special} if every defining relation has the form $u=1$.
The word $u$ in such a relation is often called a \emph{relator}.
A \emph{one-relator monoid} is a monoid defined by a single relator, i.e. $M=\Mpres{A}{u=1}$.

For an alphabet $A$, let $A^{-1}=\{ a^{-1}\::\: a\in A\}$, be another alphabet disjoint from $A$ and in 1-1 correspondence with $A$. 
Further, let $\overline{A}=A\cup A^{-1}$, and extend the bijection $a\mapsto a^{-1}$ first to $\overline{A}$ by
$(a^{-1})^{-1}\equiv a$, and then entire $\overline{A}^\ast$ by
$(a_1^{\epsilon_1}a_2^{\epsilon_2}\dots a_n^{\epsilon_n})^{-1}
=a_n^{-\epsilon_1}\dots a_2^{-\epsilon_2}a_1^{-\epsilon_1}$
for $a_i\in A$, $\epsilon_i=\pm 1$.
The \emph{free group} $F_A$ over $A$ consists of all \emph{reduced} words from $\overline{A}^\ast$,
i.e. words containing no subword of the form $aa^{-1}$ with $a\in \overline{A}$, under multiplication
$u\cdot v=\red(uv)$. Here $\red(u)$ denotes the \emph{free reduction} of $u$, i.e. the result of successively deleting
all pairs $aa^{-1}$ in $u$. It is well known that the order of performing these deletions is inconsequential.
Sometimes we will write $\red_A$ for $\red$ when we want to work with free reductions over different alphabets.
We will also extend the use of $\red$ to apply to sets of words:
$\red(W)=\{ \red(w)\::\: w\in W\}$.

A \emph{group presentation} is a pair $\Gpres{A}{R}$, where $A$ is an alphabet, and $R\subseteq \overline{A}^\ast\times\{1\}$. The group defined by presentation
is $G=\Gpres{A}{R}=\Mpres{\overline{A}}{R,\ aa^{-1}=a^{-1}a=1\ (a\in A)}$. Alternatively, $G$ can be viewed as the quotient $F_A/N$, where
$N$ is the normal subgroup of $F_A$ generated by all $u$, where $(u,1)\in R$.
Again, $G$ is a homomorphic image of $F_A$ via the natural homomorphism $w\mapsto wN$.

The \emph{free inverse monoid} over a set $A$ will be denoted by $FI_A$. It can be viewed as the monoid
\[
\Mpres{\overline{A}}{uu^{-1}u=u,\ uu^{-1}vv^{-1}=vv^{-1}uu^{-1}\ (u,v\in\overline{A}^\ast)}.
\]

An inverse monoid presentation is a pair $\Ipres{A}{R}$ where $R\subseteq \overline{A}^\ast\times\overline{A}^\ast$.
It defines the quotient $FI_A/\rho$ where $\rho $ is the congruence generated by $R$,
which is once again a natural homomorphic image of $FI_A$ via $w\mapsto w/\rho$.
If all the relations in $R$ are of the form $u=1$, we say that $\Ipres{A}{R}$ is a \emph{special} inverse monoid.
A special inverse monoid $\Ipres{A}{u=1}$ with a single relator is called a \emph{one-relator inverse monoid}.

Every inverse monoid has a (unique) maximal group homomorphic image. For the free inverse monoid $FI_A$ this is the free group $F_A$, while for a monoid $\Ipres{A}{R}$ it is the group
$\Gpres{A}{R}$ defined by the same presentation.

For a monoid (respectively, inverse monoid or group) $M$ and a subset $X\subseteq M$, we will denote
the submonoid (resp. inverse submonoid, subgroup) that $X$ generates by $\Mgen{X}$
(resp. $\Igen{X}$, $\Ggen{X}$).

For a word $w \in A^*$ we use $\pref(w)$ to denote the set of all prefixes of $w$, and $\suff(w)$ to denote the set of suffixes.

Let $X = \{x_1, x_2, \ldots, x_n\}$ be an alphabet. We often use the notation $w(x_1, \ldots, x_n)$ to denote a word from $\overline{X}^*$ where we want to stress the fact that each letter of this word belongs to $X$ or to $X^{-1}$. Given such a word $w(x_1, \ldots, x_n)$ and given a sequence of words $p_1, \ldots, p_n$ from $\overline{Y}^*$ we use $w(p_1, \ldots, p_n)$ to denote the word from $\overline{Y}^*$ obtained by replacing each letter $x_i^\epsilon$ ($\epsilon=\pm 1$) in  the word $w(x_1, \ldots, x_n)$ by the word $p_i^\epsilon$. 

The following concept is of pivotal importance for the material in this paper:

\begin{defn}
Let $M$ be the inverse monoid defined by the presentation
\[
\Ipres{A}{r_i = 1\ (i \in I)}. 
\]
A \emph{set of invertible pieces} (or often we just say a \emph{set of pieces}) for this presentation is a collection of words $p_1, \ldots, p_k \in \overline{A}^*$ which are invertible in $M$ and satisfy $r_i \in \Mgen{p_1, \ldots, p_k} \leq \overline{A}^*$ for all $i \in \{1,2,\ldots, n\}$. 
A \emph{factorisation} of the relators $r_i$ with respect to the pieces $p_1, \ldots, p_k$ is a collection of words $r_i'$ $(i = 1, \ldots, n)$ over $k$ letters such that 
\[
r_i \equiv r_i'(p_1, \ldots, p_k) \
(i = 1, \ldots, n).
\]
For $i \in I$ by the \emph{decomposition into minimal invertible pieces} we mean the unique factorisation 
\[
r_{i}\equiv r_{i 1}r_{i 2}\ldots r_{i k_i} 
\]
with the property that each $r_{i j}$ is a non-empty word in $\overline{A}^*$ which represents an invertible element of $M$ and which has no proper non-empty invertible prefixes. 
\end{defn} 

The following facts about cancellation in an inverse semigroup will also be used throughout the paper.
They are well known and easy to prove, see \cite[Lemma 3.1, Corollary 3.2]{gray:undecidable}:

\begin{lem}
\label{lem_good}
The following hold for any inverse monoid $M$:
\begin{enumerate}[leftmargin=8mm,itemsep=1mm,label=\textup{(\roman*)}]
\item \label{it:lg1}
If $s,x\in M$ are such that $sx$ is right invertible then $sxx^{-1}=s$.
\item \label{it:lg2}
If $s,t,x\in M$ are such that $sxx^{-1}t$ is right invertible then $sxx^{-1}t=st$.
\item \label{it:lg3}
If $M=\Igen{A}$ and $w\in \overline{A}^\ast$ represents a right invertible element in $M$ then
$w=\red(w)$ in $M$.
\end{enumerate}
\end{lem}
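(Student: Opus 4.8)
The plan is to derive all three parts from a single structural fact: in any inverse monoid, an element $a$ is right invertible if and only if $aa^{-1}=1$. I would first record this. If $ab=1$ for some $b$, then using the defining identity $aa^{-1}a=a$ we get $aa^{-1}=aa^{-1}\cdot 1=aa^{-1}(ab)=(aa^{-1}a)b=ab=1$; the converse is immediate, since $a^{-1}$ is then a right inverse of $a$. Throughout I would use without comment the standard identities of inverse monoids: unique inverses with $aa^{-1}a=a$ and $a^{-1}aa^{-1}=a^{-1}$, the rule $(ab)^{-1}=b^{-1}a^{-1}$, the fact that every element of the form $aa^{-1}$ or $a^{-1}a$ is idempotent, and the commutativity of idempotents.

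For part \ref{it:lg1}, I apply the structural fact to $a=sx$. Right invertibility of $sx$ gives $(sx)(sx)^{-1}=sxx^{-1}s^{-1}=1$. Multiplying on the right by $s$ yields $sxx^{-1}s^{-1}s=s$. Here $xx^{-1}$ and $s^{-1}s$ are commuting idempotents, so $sxx^{-1}s^{-1}s=s(xx^{-1})(s^{-1}s)=s(s^{-1}s)(xx^{-1})=(ss^{-1}s)xx^{-1}=sxx^{-1}$, whence $sxx^{-1}=s$. Part \ref{it:lg2} then reduces to part \ref{it:lg1}: if $sxx^{-1}t$ is right invertible, say $sxx^{-1}tv=1$, then rewriting this as $sx\cdot(x^{-1}tv)=1$ shows that $sx$ is itself right invertible, so part \ref{it:lg1} gives $sxx^{-1}=s$, and multiplying on the right by $t$ yields $sxx^{-1}t=st$.

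For part \ref{it:lg3}, I would argue by induction on the length of $w$. Since $M=\Igen{A}$ is a homomorphic image of the free inverse monoid $FI_A$, for every $a\in\overline{A}$ the letter $a^{-1}$ represents in $M$ the inverse of the element represented by $a$. If $w$ is already reduced there is nothing to prove, since then $\red(w)\equiv w$. Otherwise write $w\equiv u\,aa^{-1}\,v$ for some $a\in\overline{A}$ and $u,v\in\overline{A}^*$. Because $w$ represents a right invertible element, part \ref{it:lg2}, applied to the elements represented by $u$, $a$ and $v$, gives $w=uv$ in $M$. Thus $uv$ represents the same element as $w$, which is therefore still right invertible, and $uv$ is strictly shorter, so by the inductive hypothesis $uv=\red(uv)$ in $M$. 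Since free reduction yields a unique reduced form and $uv$ is obtained from $w$ by a single deletion, $\red(uv)\equiv\red(w)$, and combining the equalities gives $w=\red(w)$ in $M$.

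This lemma is essentially routine, so I do not expect a serious obstacle; the only genuinely computational point is the idempotent manipulation in part \ref{it:lg1}, and everything else is bookkeeping once the structural fact is in place. The one subtlety worth flagging in part \ref{it:lg3} is that each deletion step must be verified to preserve right invertibility, so that the inductive hypothesis genuinely applies at the shorter word $uv$; this is exactly what the equality $w=uv$ in $M$ delivers, and it is also what makes the independence of the reduction order harmless.
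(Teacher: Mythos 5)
Your proof is correct and complete. Note, however, that the paper itself gives no proof of Lemma \ref{lem_good}: it records these facts as ``well known and easy to prove'' and cites \cite[Lemma 3.1, Corollary 3.2]{gray:undecidable}, so there is no in-paper argument to compare against. Your route is the standard one and all the delicate points are handled: the structural fact that $a$ is right invertible if and only if $aa^{-1}=1$, combined with commutativity of the idempotents $xx^{-1}$ and $s^{-1}s$, gives \ref{it:lg1}; the re-bracketing $sxx^{-1}tv=(sx)(x^{-1}tv)=1$ correctly reduces \ref{it:lg2} to \ref{it:lg1}; and in the induction for \ref{it:lg3} you rightly verify that each deletion preserves the element of $M$ (hence its right invertibility) via \ref{it:lg2}, so the inductive hypothesis applies to the shorter word, while $\red(uv)\equiv\red(w)$ follows from the uniqueness of reduced forms, which the paper itself notes in Section \ref{sec:prelim}. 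The only implicit step worth making explicit is that for $a\in\overline{A}$ the letter $a^{-1}$ represents the inverse in $M$ of the element represented by $a$ (covering both patterns $bb^{-1}$ and $b^{-1}b$ for $b\in A$, since $a$ ranges over $\overline{A}$), and you do flag exactly this.
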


\section{Makanin-style presentation theorems for special inverse monoids}
\label{sec:Makanin}

As discussed in the introduction, given a special inverse monoid
$M=\Ipres{A}{r_i=1\ (i\in I)}$, and decomposition of each $r_i$
into invertible minimal pieces, one may ask whether turning distinct pieces into distinct letters
yields a presentation for the group of units of $M$, as it does in the monoid case.
It is easy to see that this is not true in general. Consider for instance the one-relator inverse monoid
\[
M=\Ipres{a,b,c}{abc^2b^{-1}abc^3b^{-1}abc^2b^{-1}a=1}.
\]
We claim that the minimal invertible pieces are $a$, $bc^2b^{-1}$, $bc^{3}b^{-1}$.

To see that these three words are invertible in $M$, we can argue as follows.
First note that since $a$ is both a prefix and suffix of the defining relator it follows that $a$ is both left and right invertible, and hence is invertible. Multiplying the defining relator on the left and right by the inverse of $a$ then implies that $bc^2b^{-1}abc^3b^{-1}abc^2b^{-1}$ is invertible. Since this invertible word has $bc^2b^{-1}$ both as a prefix and as a suffix it follows that $bc^2b^{-1}$ is invertible. Finally multiplying on the left and right by the inverse of $bc^2b^{-1}$ and then the inverse of $a$ we deduce that $bc^3b^{-1}$ is invertible. A more general form or reasoning like this to obtain invertible pieces, called the Adjan overlap algorithm, will be discussed in Section~\ref{sec:Benois}.     

On the other hand $M$ is not a group, and $b$ is not invertible, because of the homomorphism
$M\rightarrow B$ onto the bicyclic monoid 
$B = \Ipres{b}{bb^{-1}=1}$ 
given by $a,c\mapsto 1$, $b\mapsto b$.
Since $b$ is not invertible but $bc^2b^{-1}$ is, it follows that $bc$ is not invertible.
Similarly, $bc^2$ and $bc^3$ are not invertible.
Therefore $a$, $bc^2b^{-1}$, $bc^{3}b^{-1}$ are indeed the minimal invertible pieces, as claimed.

Replacing these pieces by letters $x,y,z$ respectively yields the presentation
\[
G=\Gpres{x,y,z}{xyxzxyx=1} \cong F_{x,y}.
\]
In the monoid $M$ we have
\[
bcb^{-1}=(bc^3b^{-1})(bc^2b^{-1})^{-1},
\]
so the elements $a$ and $t=bcb^{-1}$ form a generating set for the group of units $U=U(M)$ of $M$.
These generators satisfy the relation
$xt^2xt^3xt^2x=1$.
Since $U$ is not free with respect to $\{x,t\}$ and $G$ is free of rank $2$ it follows that $U\not\cong G$.
In fact, $U$ is defined by the presentation $\Gpres{x,t}{xt^2xt^3xt^2x=1}$.
This will follow from the more general results we will prove below (see Corollary \ref{cor:ourMakanin}).
A key feature of this example is that $\{a,bc^2b^{-1},bc^3b^{-1}\}$ is not a basis for
$\Ggen{a,bc^2b^{-1},bc^3b^{-1}}\leq F_{a,b,c}$.
We now prove a result that gives an approach to studying groups of units of special inverse monoids which deals with this obstacle.

\begin{thm}
\label{thm:Usandwiched}
\thmUcopyText
\end{thm}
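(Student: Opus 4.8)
Write $G=\Gpres{A}{r_i=1\ (i\in I)}$ for the maximal group image of $M$, with natural surjection $\sigma\colon M\to G$, and for a word $w\in\overline{A}^{\,*}$ write $[w]_M$ (respectively $[w]_G$) for the element it represents in $M$ (respectively $G$). Let $P=\Ggen{r_{ij}:i\in I,\ 1\le j\le k_i}$ be the subgroup of the group of units of $M$ generated by the pieces; this is the group whose isomorphism with $K$ we must establish. The hypothesis supplies an embedding $\bar\phi\colon K\hookrightarrow G$, namely the map induced by $y\mapsto[\phi(y)]_G$, which is well defined because it sends each defining relator $\phi^{-1}(r_i)$ of $K$ to $[r_i]_G=1$. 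The plan is to construct a surjection $\bar\psi\colon K\twoheadrightarrow P$, again induced by $\phi$, fitting into a commuting triangle $\sigma|_P\circ\bar\psi=\bar\phi$; injectivity of $\bar\psi$ will then be forced by injectivity of $\bar\phi$.

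First I would construct a homomorphism $\psi\colon F_Y\to P$. For each $y\in Y$ the element $\phi(y)$ lies in $H\le F_A$, so as an element of the free group it can be written as a product $\phi(y)=v_y(r_{ij})$ of the pieces and their inverses. The crucial — and technically most delicate — point is to verify that reading the literal reduced word $\phi(y)$ directly in $M$ yields exactly this element of $P$. This is precisely where Lemma~\ref{lem_good}\ref{it:lg3} is needed: the concatenation $v_y(r_{ij})$ is a product of invertible elements of $M$, hence invertible, hence equal in $M$ to its own free reduction $\red(v_y(r_{ij}))=\phi(y)$; so the word $\phi(y)$ and the product $v_y(r_{ij})$ represent the same element of $P$. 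Setting $\psi(y)=[\phi(y)]_M$ therefore gives a homomorphism into $P$, and it is surjective since $\psi(\phi^{-1}(r_{ij}))=[r_{ij}]_M$ recovers each generator of $P$. Finally, as $\phi^{-1}(r_i)=\phi^{-1}(r_{i1})\cdots\phi^{-1}(r_{ik_i})$ holds in $F_Y$, we get $\psi(\phi^{-1}(r_i))=[r_{i1}\cdots r_{ik_i}]_M=[r_i]_M=1$, so $\psi$ annihilates all defining relators of $K$ and descends to a surjection $\bar\psi\colon K\twoheadrightarrow P$.

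It remains to compare $\bar\psi$ with the given embedding. Applying $\sigma$ to $\bar\psi(y)=[\phi(y)]_M$ gives $\sigma(\bar\psi(y))=[\phi(y)]_G=\bar\phi(y)$, so $\sigma|_P\circ\bar\psi=\bar\phi$ on generators and hence everywhere. Since $\bar\phi$ is injective by hypothesis and factors through $\bar\psi$, the map $\bar\psi$ has trivial kernel; being also surjective, it is an isomorphism $K\to P$ induced by $\phi$, as required. I expect the only genuine obstacle to lie in the second paragraph, in reconciling the free-group expression for $\phi(y)$ in terms of the pieces with the literal word $\phi(y)$ read in $M$; once Lemma~\ref{lem_good}\ref{it:lg3} bridges free reduction in $F_A$ and equality in $M$, the remainder is formal diagram chasing.
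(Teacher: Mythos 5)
Your proposal is correct and takes essentially the same approach as the paper: the same use of Lemma \ref{lem_good}\ref{it:lg3} to pass between free reduction in $F_A$ and equality in $M$, the same induced epimorphism from $K$ onto the subgroup generated by the pieces (the paper's $\eta$), and the same exploitation of the hypothesised embedding into $G=\Gpres{A}{r_i=1\ (i\in I)}$, with your kernel argument via the factorisation $\sigma\restr_P\circ\bar\psi=\bar\phi$ being a compressed form of the paper's explicit check that $\eta$ and $\zeta^{-1}\psi\restr_U$ are mutually inverse. A minor point in your favour: you target the subgroup $P$ generated by the pieces directly, exactly as the statement is phrased, whereas the paper identifies this subgroup with the group of units via Theorem \ref{IMM:gens}, which strictly requires the pieces to be minimal.
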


\begin{figure}

\begin{center}
{\small
\begin{tikzcd}[column sep=small]
\overline{A}^\ast \arrow[r,twoheadrightarrow,"\red",yshift =2pt] \arrow[d,twoheadrightarrow,"\pi_M"]
& 
F_A \arrow[d,twoheadrightarrow,"\pi_G"] \arrow[l,hookrightarrow,"\supseteq",yshift=-2pt] \arrow[r,hookleftarrow,"\supseteq"]
&
\begin{array}{c}
H =\Ggen{\red(r_{ij})} \\ =\Ggen{s_l} 
\end{array}
\arrow[d,twoheadrightarrow,"\pi_G\restr_H"]  
&
F_Y  \arrow[d,two heads,"\pi_K"] \arrow[l,hook',two heads,"\phi"']
\\
\begin{array}{cc}
M  \\ =\Ipres{A}{r_i=1}
\end{array}
\arrow[r,twoheadrightarrow,"\psi"] 
& 
\begin{array}{c}
G \\ =\Gpres{A}{r_i=1}
\end{array}
 \arrow[r,hookleftarrow,"\supseteq"]
&
\widetilde{H} 
&
\begin{array}{c}
K \\ =\Gpres{Y}{r_i^\prime=1}
\end{array}
 \arrow[l,two heads,"\zeta"'] \arrow[dlll,two heads,"\eta"]
\\
\begin{array}{c}
U \\ =\Igen{\pi_M(r_{ij})}\\ =\Igen{\pi_M(s_l)}
\end{array}
\arrow[u,hook,"\rotatebox{90}{$\subseteq$}"]
&&&
\end{tikzcd}
}

\caption{Monoids, groups and homomorphisms in the proof of Theorem \ref{thm:Usandwiched}.
Throughout, indexing is understood as follows: $i\in I$, $1\leq j\leq k_i$, $l\in L$.} \label{fig1}
\end{center}
\end{figure}
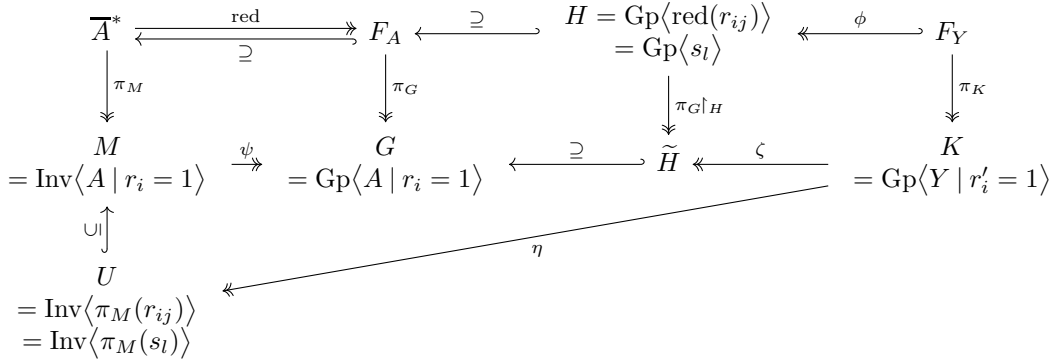

\begin{proof}
We begin working towards the diagram shown in Figure \ref{fig1}.
Start from the free monoid $\overline{A}^\ast=(A\cup A^{-1})^\ast$.
The monoid $M=\Ipres{A}{r_i=1 \; (i \in I)}$ is a natural homomorphic image of it, and we denote
by $\pi_M : \overline{A}^\ast\rightarrow M$ the natural epimorphism.

Let $F_A$ denote the free group on $A$. Recall that we regard $F_A$ as consisting of freely reduced words over $A$, and so $F_A\subseteq \overline{A}^\ast$. However, note that this inclusion is \emph{not} a homomorphism.
The group $F_A$ is a homomorphic image of $\overline{A}^\ast$, via the free reduction homomorphism
$\red:\overline{A}^\ast\rightarrow F_A$.
The group $G=\Gpres{A}{r_i=1 \; (i \in I)}$ is a natural homomorphic image of $F_A$ via
$\pi_G:F_A\rightarrow G$.
Since $G$ is also an inverse monoid, and since $M$ is \emph{defined} by $r_i=1$, $i\in I$, it follows that there exists an epimorphism 
$\psi:M\rightarrow G$ such that
\begin{equation}
\label{eq1}
\psi\pi_M=\pi_G\red.
\end{equation} 

Next, let
\[
H=\Ggen{\{ \red(r_{ij}) \::\: i\in I,\ 1\leq j\leq k_i\} }\leq F_A,
\]
and let
\[
\widetilde{H}=\pi_G(H)\leq G.
\]
As a subgroup of a free group, $H$ itself must be free, i.e. $H\cong F_Y$ for some alphabet $Y=\{ y_l\::\: l\in L\}$ as in the statement,
with an isomorphism
\begin{equation}
\label{eq5}
\phi:F_Y\rightarrow H,\ y_l\mapsto s_l\ (l\in L).
\end{equation}

Since $\red(r_{ij})\in H$, there exist $r_{ij}^\prime\in F_Y$ such that
\begin{equation}
\label{eq2}
\phi (r_{ij}^\prime)\equiv\red(r_{ij})\ (i\in I,\ 1\leq j\leq k_i).
\end{equation}
Let
\begin{equation}
\label{eq2a}
r_i^\prime \equiv \red_Y (r_{i1}^\prime r_{i2}^\prime\dots r_{ik_i}^\prime)\in F_Y\subseteq \overline{Y}^\ast,
\end{equation}
where $\red_Y$ stands for the free reduction over the alphabet $Y$ (and is not shown in the diagram).
Let
\begin{equation}
\label{eq3}
K=\Gpres{Y}{r_i^\prime =1\; (i\in I)}
=\Gpres{Y}{
\phi^{-1}(r_{i 1}) \phi^{-1}(r_{i 2})
\ldots 
\phi^{-1}(r_{i k_i}) = 1 \; (i \in I) 
},
\end{equation}
with the natural epimorphism $\pi_K : F_Y\rightarrow K$.

Next we claim that the generators $\{\pi_G(s_l)\::\: l\in L\}$ of $\widetilde{H}$ satisfy the relations
$r_i^\prime=1$, $i\in I$.
Indeed
\begin{align*}
\pi_G\phi (r_i^\prime) &= \pi_G\phi \red_Y (r_{i1}^\prime\dots r_{ik_i}^\prime) && \text{(by \eqref{eq2a})}
\\
&=\pi_G( \phi (r_{1i}^\prime)\dots \phi (r_{ik_i}^\prime)) && \text{($\phi$ is a homomorphism)}
\\
&=\pi_G(\red(r_{i1})\dots\red(r_{ik_i})) && \text{(by \eqref{eq2})}
\\
&=\pi_G\red(r_{i1}\dots r_{ik_i}) && \text{($\red$ is a homomorphism)}
\\
&=\pi_G\red(r)=1 .&&
\end{align*}
Therefore, there exists an epimorphism
\begin{equation}
\label{eq4}
\zeta:K\rightarrow \overline{H},\ \pi_K(y_l)\mapsto \pi_G(s_l)\ (l\in L).
\end{equation}

To complete the set-up, we turn our attention to the subgroup $U$ of $M$ generated by $\{ \pi_M(r_{ij})\::\: i\in I,\ 1\leq j\leq k_i\}$.

We claim that
\begin{equation}
\label{eq11}
\pi_M(s_l)\in U \quad \text{for all } l\in L.
\end{equation}
Since $s_l\in H=\Ggen{\{ \red(r_{ij})\: :\: i\in I,\ 1\leq i\leq k_i\}    }$, it follows that
\[
s_l\equiv\red(\red(r_{i_1j_1})^{\epsilon_1}\dots \red(r_{i_m j_m})^{\epsilon_m})\equiv
\red(r_{i_1 j_1}^{\epsilon_1}\dots r_{i_m j_m}^{\epsilon_m}).
\]
But all $\pi_M(r_{ij})$ are invertible in $M$ by assumption, and so
\begin{align*}
U&\ni \pi_M(r_{i_1 j_1})^{\epsilon_1}\dots \pi_M(r_{i_m j_m})^{\epsilon_m} &&\\
&=\pi_M (r_{i_1 j_1}^{\epsilon_1}\dots r_{i_m}^{\epsilon_m j_m} ) &&  \text{($\pi_M$ is  a homomorphism)}\\
&=\pi_M(\red(r_{i_1 j_1}^{\epsilon_1}\dots r_{i_m j_m}^{\epsilon_m})) && 
\text{(by Lemma \ref{lem_good}\ref{it:lg3})}\\
&=\pi_M(s_l),
\end{align*}
as required.

Next we claim that $U$ is actually generated by $\{ \pi_M(s_l)\::\: l\in L\}$.
This is proved by a very similar argument to the one in the previous paragraph, starting from the fact that 
$\{s_l\::\: l\in L\}$ is a generating set for $H$, expressing the generators $\red(r_{ij})$ in terms of the $s_l$, and using \eqref{eq11}.
Furthermore, the generators $\{ \pi_M(s_l)\::\: l\in L\}$ satisfy the relations $r_i^\prime=1$. Indeed
\begin{align*}
\pi_M\phi(r_i^\prime) &= \pi_M\phi\red_Y(r_{i1}^\prime\dots r_{ik_i}^\prime)
&& \text{(by \eqref{eq2a})}
\\
&= \pi_M(\phi(r_{i1}^\prime)\dots \phi(r_{ik_i}^\prime))
&&  \text{($\phi$ is a homomorphism)}
\\
&=\pi_M(\red(r_{i1})\dots \red(r_{ik_i})) 
&& \text{(by \eqref{eq2})}
\\
&= \pi_M\red(r_{i1})\dots \pi_M\red(r_{ik_i}) 
&&  \text{($\pi_M$ is a homomorphism)}
\\
&=\pi_M(r_{i1})\dots \pi_M(r_{ik_i})
&& \text{(by Lemma \ref{lem_good}\ref{it:lg3})}
\\
&=\pi_M(r_{i1}\dots r_{ik_i})
&& \text{($ \pi_M$ is a homomorphism)}
\\
&=\pi_M(r)=1.
\end{align*}
Therefore, there exists an epimorphism
\begin{equation}
\label{eq13}
\eta : K\rightarrow U,\ \pi_K(y_l)\mapsto \pi_M(s_l)\ (l\in L).
\end{equation}

With the foregoing set-up, the assertion of our theorem can be stated as follows:
\begin{quotation}
\textit{If $\zeta$ is an isomorphism between $K$ and $\widetilde{H}$, then $\eta$ is an isomorphism between $K$ and $U$.}
\end{quotation}
This is now actually easy to prove: it is sufficient to show that $\eta$ and 
$\zeta^{-1}\psi\restr_U$ are mutually inverse homomorphisms. To do so, in turn, it is sufficient to verify that
their two compositions act as the identity on the generating sets of $U$ and $K$ respectively.
Indeed, we have:
\begin{align*}
\eta\zeta^{-1}\psi\restr_U\pi_M(s_l) 
&= \eta\zeta^{-1}\pi_G\red (s_l) && \text{(by \eqref{eq1})}
\\
&=\eta\zeta^{-1}\pi_G(s_l) && \text{($s_l$ is reduced)}
\\
&=\eta\pi_K(y_l) && \text{(by \eqref{eq4})}
\\
&=\pi_M(s_l) && \text{(by \eqref{eq13})},
\end{align*}
and
\begin{align*}
\zeta^{-1}\psi\restr_U\eta\pi_K(y_l)
&= \zeta^{-1} \psi \pi_M(s_l) && \text{(by \eqref{eq13})}
\\
&=\zeta^{-1}\pi_G\red(s_l) && \text{(by \eqref{eq1})}
\\
&=\zeta^{-1}\pi_G(s_l) && \text{($s_l$ is reduced)}
\\
&=\pi_K(y_l) && \text{(by \eqref{eq4})},
\end{align*}
completing the proof of the theorem.
\end{proof}

It is reasonable to ask whether if one translates the presentation for $K$ given in Theorem \ref{thm:Usandwiched}
back into the alphabet $A$ the resulting presentation defines $M$.
The following theorem shows that this indeed is the case when the pieces are reduced words.

\begin{thm}
\label{thm:change-presn}
Let
$M= \Ipres{A}{r_i=1 \; (i \in I)}$
 be a special inverse monoid, let
$r_{i}\equiv r_{i 1}r_{i 2}\ldots r_{i k_i} $
be a factorisation into invertible pieces for $i\in I$, 
and suppose that all pieces are reduced words.
Let $\phi$ be an epimorphism from some free group $F_Y$ onto the subgroup of $F_A$ generated by the pieces,
and for every piece $r_{ij}$ pick a pre-image $r_{ij}^\prime\in F_Y$ under $\phi$.
Finally, let  
$\pi:\overline{Y}^* \rightarrow \overline{A}^*$ be the homomorphism extending $x \mapsto \phi(x)$ $(x \in \overline{X})$. Then 
\begin{equation}
\label{eq20}
M=\Ipres{A}{r_i=1\ (i \in I)}
=
\Ipres{A }{\pi(r_{i 1}^\prime) \ldots \pi(r_{i k_i}^\prime)=1 \ (i \in I)}
\end{equation}
and $\{ \pi(y): y \in \overline{Y} \}$ is a set of invertible pieces for $\Ipres{A }{\pi(r_{i 1}^\prime) \ldots \pi(r_{i k_i}^\prime)=1 \; (i \in I)}$ which generates the same subgroup of $M$ as the original pieces $\{ r_{i j} \::\: i \in I,\  1 \leq j \leq k_i\}$. 
\end{thm}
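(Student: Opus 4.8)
The plan is to set $M' := \Ipres{A}{\pi(r_{i1}^\prime)\ldots\pi(r_{ik_i}^\prime)=1\ (i\in I)}$ and prove \eqref{eq20} by showing that $M$ and $M'$ are quotients of $FI_A$ by the \emph{same} congruence. Since both presentations are over the same alphabet $A$, it suffices to verify that the defining relators of each presentation are trivial in the monoid defined by the other; this produces mutually inverse epimorphisms fixing $\overline{A}$, and hence $M=M'$. Throughout I would rely on the single computational identity $\red_A(\pi(w))=\phi(\red_Y(w))$ for $w\in\overline{Y}^\ast$, whose special case $\red_A(\pi(r_{ij}^\prime))=\phi(r_{ij}^\prime)=r_{ij}$ uses the hypothesis that the pieces are reduced words.

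For the easy inclusion I would show that each new relator $R_i:=\pi(r_{i1}^\prime)\ldots\pi(r_{ik_i}^\prime)$ equals $1$ in $M$. Each $\pi(y)$ (for $y\in\overline{Y}$) is a word $\phi(y)$ lying in the subgroup $H\leq F_A$ generated by the pieces, i.e. a free reduction of some product of the $r_{ij}^{\pm1}$; since the pieces are invertible in $M$ that product is invertible, so Lemma \ref{lem_good}\ref{it:lg3} shows $\pi(y)$ represents the same invertible element. Hence each $\pi(r_{ij}^\prime)$, being a product of the $\pi(y)^{\pm1}$, is invertible in $M$, and a second application of Lemma \ref{lem_good}\ref{it:lg3} gives $\pi(r_{ij}^\prime)=\red_A(\pi(r_{ij}^\prime))=r_{ij}$ in $M$. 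Multiplying over $j$ yields $R_i=r_{i1}\ldots r_{ik_i}=r_i=1$ in $M$, and hence an epimorphism $M'\twoheadrightarrow M$.

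The substance lies in the reverse inclusion: that each original relator $r_i$ is trivial in $M'$. Here $R_i=1$ holds by definition in $M'$, so every prefix $Q_j:=\pi(r_{i1}^\prime\ldots r_{ij}^\prime)$ of $R_i$ has the complementary suffix as a right inverse and is therefore right invertible; by Lemma \ref{lem_good}\ref{it:lg3} it equals its reduction $q_j:=\red_A(r_{i1}\ldots r_{ij})$ in $M'$. I would then prove by induction on $j$ that the \emph{unreduced} prefix $r_{i1}\ldots r_{ij}$ also equals $q_j$ (and so is right invertible) in $M'$. Given this for $j-1$, I would use the prefix identity $Q_{j-1}=q_{j-1}$ to write $q_{j-1}\cdot\pi(r_{ij}^\prime)=Q_j=q_j$, which is right invertible; since $\pi(r_{ij}^\prime)$ freely reduces to the reduced piece $r_{ij}$, I would strip its cancelling pairs $xx^{-1}$ one at a time using Lemma \ref{lem_good}\ref{it:lg2}, each intermediate product remaining right invertible because it equals $q_j$, until reaching $q_{j-1}\cdot r_{ij}=q_j$. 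Combined with the inductive hypothesis $r_{i1}\ldots r_{i,j-1}=q_{j-1}$, this gives $r_{i1}\ldots r_{ij}=q_j$. Taking $j=k_i$ yields $r_i=q_{k_i}=Q_{k_i}=R_i=1$ in $M'$, and hence an epimorphism $M\twoheadrightarrow M'$. I expect this inductive stripping — keeping right invertibility at every step so that Lemma \ref{lem_good}\ref{it:lg2} continues to apply — to be the main obstacle; everything else is routine bookkeeping with free reduction.

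Finally, with $M=M'$ established, the remaining assertions follow quickly. Each $\pi(y)$ is invertible in $M'=M$ by the computation used in the easy inclusion, and every relator $R_i$ is by construction a product of the $\pi(y)$, so $\{\pi(y):y\in\overline{Y}\}$ is a set of invertible pieces for the new presentation. The subgroup of $M$ they generate coincides with that generated by the original pieces, since $\{\phi(y)\}$ and $\{r_{ij}\}$ generate the same subgroup $H$ of $F_A$ and, all the relevant words being invertible, Lemma \ref{lem_good}\ref{it:lg3} lets one pass freely between products of pieces and their free reductions.
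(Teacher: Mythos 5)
Your proposal is correct and follows essentially the same route as the paper: both directions of \eqref{eq20} are obtained by showing that each presentation's relators hold in the monoid defined by the other, using the key identity $\red_A\pi(r_{ij}^\prime)\equiv r_{ij}$ (which is where reducedness of the pieces enters) together with Lemma \ref{lem_good}, and the invertibility of each $\pi(y)$ and the pieces/subgroup assertions are argued exactly as in the paper. The only difference is one of detail: where the paper dispatches the reverse inclusion with a terse citation of Lemma \ref{lem_good}\ref{it:lg3}, you expand it into an induction along the right-invertible prefixes $Q_j$ with cancelling pairs stripped one at a time via Lemma \ref{lem_good}\ref{it:lg2} --- a legitimate, and if anything more careful, unpacking of that step, since applying Lemma \ref{lem_good}\ref{it:lg3} factor-by-factor would presuppose right invertibility of each $\pi(r_{ij}^\prime)$ in the new monoid, which is not available at that stage.
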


\begin{proof}
Denoting by $\red_A$ the free reduction over the alphabet $A$, we have
\[
\red_A\pi(w)\equiv \phi(w) \quad \text{for all } w\in \overline{Y}^\ast,
\]
and, in particular,
\begin{equation}
\label{eq21}
\red_A\pi(r_{ij}^\prime)\equiv r_{ij} \quad (i\in I,\ 1\leq j\leq k_i)
\end{equation}
because all $r_{ij}$ are reduced.

Now, we show that all the relations $\pi(r_{i 1}^\prime) \ldots \pi(r_{i k_i}^\prime)=1$ hold in the monoid $M$.
Indeed, denoting by $\pi_M:\overline{A}^\ast\rightarrow M$ the natural epimorphism, we have
\begin{align*}
\pi_M(\pi(r_{i1}^\prime)\dots \pi(r_{ik_i}^\prime))
&=
\pi_M(\red_A\pi(r_{i1}^\prime)\dots \red_A\pi(r_{ik_i}^\prime))
&& \text{(by Lemma Lemma \ref{lem_good}\ref{it:lg3})}
\\
&=
\pi_M(r_{i1}\dots r_{ik_i}) 
&& \text{(by \eqref{eq21})}
\\
&=\pi_M(r_i)=1_M.&&
\end{align*}
Furthermore, these relations imply the original ones. To see this, let $M^\prime$ be the monoid
defined by
$\Ipres{A }{\pi(r_{i 1}^\prime) \ldots \pi(r_{i k_i}^\prime)=1 \; (i \in I)}$,
and let $\pi_{M^\prime}:\overline{A}^\ast\rightarrow M^\prime$ be the associated natural epimorphism, and then
\begin{align*}
1_{M^\prime} &=
\pi_{M^\prime} (\pi(r_{i1}^\prime)\dots\pi(r_{ik_i}^\prime)) &&
\\
&=\pi_{M^\prime}(\red_A\pi(r_{i1}^\prime)\dots\red_A\pi(r_{ik_i}^\prime))
&& \text{(by Lemma \ref{lem_good}\ref{it:lg3})}
\\
&=\pi_{M^\prime}(r_{i1}\dots r_{ik_i})
&& \text{(by \eqref{eq21})}
\\
&=\pi_{M^\prime}(r_i). &&
\end{align*}
This proves \eqref{eq20}.

To prove that each $\pi(y)$ is invertible in $M$, note that by the assumptions of the theorem
$\phi(y)$ belongs to the subgroup of the free group $F_A$ generated by
$\{ r_{ij}\::\: i\in I,\ 1\leq j\leq k_i\}$,
and that $\pi(y)\equiv \phi(y)$. Hence we can write
\begin{equation}
\label{eq22}
\pi(y)\equiv \red_A(r_{i_1j_1}\dots r_{i_kj_k}).
\end{equation}
Since all $r_{ij}$ are units in $M$, by Lemma \ref{lem_good}\ref{it:lg3} we have
\begin{equation}
\label{eq23}
\pi_M(r_{i_1j_1}\dots r_{i_kj_k})=\pi_M\red_A(r_{i_1j_1}\dots r_{i_kj_k}).
\end{equation}
Combining \eqref{eq22} and \eqref{eq23} we obtain:
\[
\pi_M\pi(y) =
\pi_M \red_A(r_{i_1j_1}\dots r_{i_kj_k})=\pi_M(r_{i_1j_1}\dots r_{i_kj_k})
= \pi_M(r_{i_1j_1})\dots \pi_M(r_{i_kj_k}),
\]
which is clearly a unit.

Finally, since each relator $\pi(r_{i 1}^\prime) \ldots \pi(r_{i k_i}^\prime)$ is clearly a product of the $\pi(y)$,
it follows that $\{ \pi(y)\::\: y\in \overline{Y}\}$ is indeed a set of pieces for the presentation
$\Ipres{A }{\pi(r_{i 1}^\prime) \ldots \pi(r_{i k_i}^\prime)=1 \; (i \in I)}$.
\end{proof}

\begin{remark}
Both Theorems \ref{thm:Usandwiched} and \ref{thm:change-presn} of course apply when the pieces $r_{ij}$ are minimal. In that case, the subgroup $U$ generated by them is the group of units of $M$ by Theorem~\ref{IMM:gens}.
\end{remark}

\begin{remark}
In the statement of Theorem~\ref{thm:change-presn}, some $\pi(y)$ may actually not occur in  $\pi(r_{i 1}^\prime) \ldots \pi(r_{i k_i}^\prime)$.  
Also, we note that the assumptions in Theorem~\ref{thm:change-presn} can be weakened a little further: instead of the original pieces being reduced, one could require that $r_{ij}$ can be obtained from $\pi(r_{ij}^\prime)$ by using reductions.
\end{remark}

\begin{remark}
The statement of Theorem~\ref{thm:change-presn} is technical. 
When Theorem~\ref{thm:change-presn} is applied in the particular case where the factorizations of the defining relators are all into \emph{minimal} invertible pieces then,  
expressed in a slightly less technical way, this theorem tells us that given  
$M = \Ipres{A}{r_i=1 \; (i \in I)}$, 
then for any basis $B \subseteq F_A$ of the subgroup of $F_A$ generated by the minimal invertible pieces of the relators of this presentation, there is a presentation $\Ipres{A}{s_i=1 \; (i \in I)}$ for $M$ for which $B$ is a set of invertible pieces and also generates the group of units of $M$. 
As we shall see below, changing the presentation for $M$ in this way will often be a useful first step when analysing examples.   
\end{remark}

We note that the assumption that the pieces are reduced is necessary in Theorem \ref{thm:change-presn}.
Indeed, in the monoid $\Ipres{x,y}{xx^{-1}yy^{-1}xx^{-1}=1}$, the minimal invertible pieces are $xx^{-1}$ and $yy^{-1}$.
In $F_{x,y}$ they generate the trivial subgroup. However, the original monoid is \emph{not} isomorphic
to the free inverse monoid $\Ipres{x,y}{}$.

Motivated by Theorem \ref{thm:Usandwiched} we introduce the following concept:

\begin{defn}
\label{defn:freefor}
A finite set of words $w_1, \ldots, w_k \in \overline{A}^*$ is \emph{free for substitution
in a group presentation}
\[
\Gpres{x_1,\dots,x_k}{r_i(x_1, \ldots, x_k) = 1\ (i\in I)}
\]
 if  the subgroup of 
\[
\Gpres{A}{r_i(w_1, \ldots, w_k)=1\ (i\in I)}
\] 
generated by $w_1, \ldots, w_k$ is isomorphic to $\Gpres{x_1,\dots,x_k}{r_i(x_1, \ldots, x_k) = 1\ (i\in I)}$ via the map $x_i \mapsto w_i$. 
\end{defn}

\begin{cor}
\label{cor:freefor}
Let $A$ be an alphabet, and suppose that $r_1,\dots, r_k\in\overline{A}^\ast$ are such that the free group
$H=\Ggen{r_1,\dots,r_k}\leq F_A$ has a basis which is free for substitution into any one-relator presentation.
Then the group $U$ of units of any
one-relator inverse monoid $M=\Ipres{A}{r=1}$, where the minimal invertible pieces of $r$ are precisely
$ r_1,\dots, r_k$, is again a one-relator group.
Specifically, if $\{ p_1,\dots, p_n\}$ is a basis for $H$, if each $r_j$ is expressed in terms of this basis
as $r_j=r_j^\prime(p_1,\dots,p_n)$, where $r_j^\prime\in \overline{\{x_1,\dots,x_n\}}^\ast$, and if
$r=r^\prime(r_1,\dots, r_k)$ then $U$ is defined by
\[
U=\Gpres{x_1,\dots,x_n}{r^\prime(r_1^\prime,\dots,r_k^\prime)=1}.
\]
\end{cor}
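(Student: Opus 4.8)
The plan is to obtain this as a direct specialisation of Theorem~\ref{thm:Usandwiched} to the one-relator case, where the given basis furnishes the isomorphism $\phi$ and the free-for-substitution hypothesis supplies precisely the embedding condition that the theorem requires. So first I would set up the data of Theorem~\ref{thm:Usandwiched} for $M=\Ipres{A}{r=1}$ (here the index set is a singleton), with invertible pieces $r_1,\dots,r_k$ generating the subgroup $H=\Ggen{r_1,\dots,r_k}\leq F_A$. Taking the alphabet to be $\{x_1,\dots,x_n\}$ and defining $\phi\colon F_{\{x_1,\dots,x_n\}}\to H$ by $x_l\mapsto p_l$ gives the required isomorphism, since $\{p_1,\dots,p_n\}$ is a basis of $H$.

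Next I would identify the group $K$ produced by Theorem~\ref{thm:Usandwiched}. Since $r_j=r_j'(p_1,\dots,p_n)$ in $F_A$, we have $\phi^{-1}(r_j)=r_j'(x_1,\dots,x_n)$; and since $r=r'(r_1,\dots,r_k)$, the relator $\phi^{-1}(r_1)\cdots\phi^{-1}(r_k)$ of $K$ is exactly $r'(r_1',\dots,r_k')$. Hence $K=\Gpres{x_1,\dots,x_n}{r'(r_1',\dots,r_k')=1}$, which is precisely the one-relator presentation claimed for $U$. (The free reduction $\red_Y$ occurring in the proof of Theorem~\ref{thm:Usandwiched} is harmless, as reducing a relator does not change the group it defines.)

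The heart of the argument is verifying the hypothesis of Theorem~\ref{thm:Usandwiched}, namely that $\phi$ induces an embedding of $K$ into $G=\Gpres{A}{r=1}$, and this is exactly where the free-for-substitution assumption is used, specialised to the one-relator presentation $\Gpres{x_1,\dots,x_n}{r'(r_1',\dots,r_k')=1}$. Substituting $p_i$ for $x_i$ in the relator $r'(r_1',\dots,r_k')$ yields a word that evaluates to $r$ in $F_A$ (since each $r_j'$ evaluates to $r_j$ and $r=r'(r_1,\dots,r_k)$), so $\Gpres{A}{r'(r_1',\dots,r_k')(p_1,\dots,p_n)=1}$ is precisely $G$. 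By Definition~\ref{defn:freefor} the subgroup of $G$ generated by $p_1,\dots,p_n$ is therefore isomorphic to $K$ via $x_i\mapsto p_i$. That subgroup is exactly $\widetilde{H}=\pi_G(H)$, and the asserted isomorphism is the map $\zeta$ appearing in the proof of Theorem~\ref{thm:Usandwiched}; thus $\zeta$ is an isomorphism, which is the required embedding.

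Finally I would invoke the conclusion of Theorem~\ref{thm:Usandwiched}: $\phi$ induces an isomorphism between $K$ and the subgroup of $M$ generated by the pieces $r_1,\dots,r_k$. By Theorem~\ref{IMM:gens} that subgroup is the full group of units $U$, so $U\cong K=\Gpres{x_1,\dots,x_n}{r'(r_1',\dots,r_k')=1}$ is a one-relator group, as claimed. Since the corollary is essentially a repackaging of Theorem~\ref{thm:Usandwiched}, I expect the only real obstacle to be bookkeeping: keeping the two layers of substitution ($x_i\mapsto p_i$ and $x_i\mapsto r_i$, through the intermediate words $r_j'$ and $r'$) and the various free reductions straight, and checking carefully that the free-for-substitution property for this specific one-relator presentation is literally the embedding hypothesis of Theorem~\ref{thm:Usandwiched}.
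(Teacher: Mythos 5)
Your proposal is correct and is exactly the paper's argument: the paper proves this corollary by noting it ``follows immediately from Definition~\ref{defn:freefor} and Theorem~\ref{thm:Usandwiched}'', and your write-up simply makes that deduction explicit --- taking $\phi\colon x_l\mapsto p_l$, identifying $K$ with $\Gpres{x_1,\dots,x_n}{r'(r_1',\dots,r_k')=1}$, observing that the substituted relator freely reduces to $\red(r)$ so the ambient group is $G$, and using free-for-substitution to verify the embedding hypothesis (that $\zeta$ is an isomorphism onto $\widetilde{H}$), with Theorem~\ref{IMM:gens} identifying the subgroup generated by the minimal pieces with the full group of units. The bookkeeping points you flag (the harmlessness of $\red_Y$, and that substitution yields a word equal to $r$ in $F_A$ rather than literally) are handled correctly.
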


\begin{proof}
This follows immediately from Definition \ref{defn:freefor} and Theorem \ref{thm:Usandwiched}.
\end{proof}

It is therefore of interest to investigate sets that are free for substitutions into one-relator presentations, or indeed, into arbitrary presentations.
For instance, we can prove the following:

\begin{thm} \label{thm:markers}
Let $A$ be an alphabet, let  $p_1, \ldots, p_n \in F_A$, and denote by $A_i$ the set of letters of $A$ that occur in $p_i$. 
If 
\[
A_i \not\subseteq 
\bigcup_{j \neq i}{A_j} \quad
\text{for all } i = 1, \ldots, n
\]
 then $\{p_1,\dots, p_n\}$ is free for substitutions into any one-relator presentation.
\end{thm}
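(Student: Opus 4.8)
The plan is to reduce the statement to the injectivity of a single naturally-occurring homomorphism, and then to realise the ambient group as built up from $K$ by a sequence of extensions to which a Freiheitssatz applies. Fix the one-relator presentation and write $K=\Gpres{x_1,\dots,x_n}{r(x_1,\dots,x_n)=1}$ and $G=\Gpres{A}{r(p_1,\dots,p_n)=1}$. First I would note that $x_i\mapsto p_i$ defines a homomorphism $\eta\colon K\to G$, well defined because $r(p_1,\dots,p_n)=1$ is a defining relator of $G$, with image exactly $\Ggen{p_1,\dots,p_n}\le G$. Thus $\eta$ is automatically onto the relevant subgroup, and being free for substitution amounts precisely to $\eta$ being \emph{injective}; the whole theorem reduces to this. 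I would then recast the problem by a Tietze transformation: adjoining fresh generators $x_1,\dots,x_n$ with relations $x_i=p_i$ and rewriting the relator gives
\[
G\;\cong\;\widehat{G}:=\Gpres{A,x_1,\dots,x_n}{r(x_1,\dots,x_n)=1,\ x_i=p_i\ (1\le i\le n)},
\]
the isomorphism eliminating the $x_i$. Under this identification $\eta$ is induced by $x_i\mapsto x_i$, so injectivity of $\eta$ is equivalent to $\Ggen{x_1,\dots,x_n}\le\widehat G$ being isomorphic to $K$.

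The idea is now to build $\widehat G$ out of $K$ in stages, using the markers. Let $a_i\in A_i\smallsetminus\bigcup_{j\ne i}A_j$ be a private letter of $p_i$ and let $C$ be the remaining (non-marker) letters of $A$. Starting from $K$, first form the free product $K*F_C$, and then adjoin the markers $a_1,\dots,a_n$ one at a time, the $i$-th step adding the generator $a_i$ together with the single relation $x_i=p_i$. The key point, guaranteed by the marker hypothesis, is that $a_i$ occurs in this new relator but in none of the earlier ones (nor in $r$); hence the $i$-th step is a one-relator extension of the previous group $B_{i-1}$ by the free factor $\Ggen{a_i}\cong\Z$, with relator $w_i:=x_i^{-1}p_i$ genuinely involving $a_i$.

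The heart of the argument is to show each step is injective on the base, so that $K$ embeds all the way up into $\widehat G\cong G$. When $r$ is not a proper power I would argue as follows. Every group in the tower is locally indicable: $K$ is (Brodskii's theorem, as $r$ is not a proper power), free products preserve local indicability, and a one-relator product of locally indicable groups by a relator that is not a proper power is again locally indicable. Moreover each $w_i$ is cyclically reduced of free-product length at least $2$, is not conjugate into a factor (it involves both $a_i$ and $x_i$), and is not a proper power (the $K$-syllable $x_i^{-1}$ occurs only once in its normal form; the degenerate cases where $x_i=1$ in $K$ collapse to a Tietze elimination). The Freiheitssatz for one-relator products of locally indicable groups (Brodskii, Howie) then yields $B_{i-1}\hookrightarrow B_i$ at every stage, and composing gives $K\hookrightarrow\widehat G\cong G$. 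Since $p_i=x_i$ in $G$, this gives $\Ggen{p_1,\dots,p_n}=\Ggen{x_1,\dots,x_n}\cong K$. (Running the same marker tower over the free group $F_{x_1,\dots,x_n}$, i.e.\ omitting the relator $r$, shows as a byproduct that $\{p_1,\dots,p_n\}$ freely generates $\Ggen{p_1,\dots,p_n}\le F_A$.)

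The main obstacle is the case in which $r$ is a proper power $v^m$, $m\ge 2$. Then $K$ has torsion and is not locally indicable, so the locally indicable Freiheitssatz no longer governs the marker steps — indeed the corresponding embedding statement for a non-proper-power relator over an arbitrary base is exactly the open Kervaire--Laudenbach problem. To deal with this I would reorganise the construction: run the torsion-free marker tower over $F_{x_1,\dots,x_n}$ to establish that $\{p_1,\dots,p_n\}$ is a free basis of $H=\Ggen{p_1,\dots,p_n}\le F_A$, and only then impose the single high-powered relation, invoking the Freiheitssatz for one-relator products with a high-powered relator. Arranging this final step compatibly with the (in general non-free-factor) position of $H$ inside $F_A$, and verifying the relevant hypotheses through the markers, is where I expect the genuine difficulty of the proof to lie.
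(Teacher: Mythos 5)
You correctly reduce the theorem to injectivity of $\eta\colon K\to G$, and your tower argument for the case where $r$ is not a proper power is plausible (modulo the degenerate-syllable details you wave at), but the proposal has a genuine gap exactly where you flag it: the case $r\equiv v^m$, $m\geq 2$. The theorem imposes no hypothesis on $r$, so this case must be handled, and your proposed reorganisation does not repair it. Once $K$ has torsion, local indicability fails and the Brodskii--Howie Freiheitssatz no longer powers the marker tower; and your fallback --- first showing $\{p_1,\dots,p_n\}$ is a basis of $H=\Ggen{p_1,\dots,p_n}\leq F_A$ and then imposing the single high-powered relator --- founders on the point you yourself concede: the quotient $F_A/\langle\!\langle r(p_1,\dots,p_n)\rangle\!\rangle$ is \emph{not} a one-relator product with $H$ as a factor, since $H$ is in general not a free factor of $F_A$, so there is no free-product decomposition for Howie's high-power Freiheitssatz to act on (and that result would in any event leave $m=2,3$ uncovered). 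Working over an arbitrary base group is precisely the open Kervaire--Laudenbach/one-relator-product territory, so no amount of care in ``verifying the relevant hypotheses through the markers'' will close this by the route you sketch. A secondary soft spot: to keep the tower's bases locally indicable you need each intermediate relator $w_i$ not to be a proper power, and a single degenerate step (e.g.\ one producing a finite-order marker) poisons all subsequent steps.

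The idea you missed is that one never needs to work over the base group $K$ at all. The paper's proof inducts on the length of the substituted relator $r(p_1,\dots,p_n)$: introduce a single new generator $x_n$ with relator $p_nx_n^{-1}$, obtaining a two-relator presentation whose relators contain disjoint private letters --- $a_n$ occurs in $p_nx_n^{-1}$ only, while $a_1$ survives in the cyclic reduction of $r(p_1,\dots,p_{n-1},x_n)$ by Lemma \ref{lem:lettersappear}. Lemma \ref{lem:2rel} then splits this two-relator group as an amalgamated free product of two honest one-relator groups over subgroups generated by subsets of the generators, which are free by the \emph{classical} Magnus Freiheitssatz --- valid for every relator, proper powers included. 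Hence $\Gpres{A\setminus\{a_n\},x_n}{\overline{r}=1}$ embeds, the tuple $p_1,\dots,p_{n-1},x_n$ again satisfies the marker hypothesis, and induction on length finishes uniformly in $r$. So your torsion-free branch is a genuinely different (and much heavier) route to a weaker statement, but the uniform treatment of proper powers is exactly what your plan lacks and what the paper's elementary amalgam-plus-induction argument supplies.
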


The following two lemmas will be used in the proof:

\begin{lem}
\label{lem:2rel}
If $G = \Gpres{A}{r_1=1, r_2=1}$ where each $r_i$ is cyclically reduced and contains a letter $a_i$ not contained by the other $r_j$, then the one-relator groups $G_1=\Gpres{A \setminus \{a_2\}}{r_1=1}$ and $G_2=\Gpres{A \setminus \{a_1\}}{r_2=1}$ both embed naturally into $G$.  
\end{lem}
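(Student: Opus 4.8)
The plan is to realise $G$ as an amalgamated free product of $G_1$ and $G_2$ over a common free subgroup, and then invoke the standard fact that the factors of an amalgamated free product embed. Write $A_0 = A \setminus \{a_1, a_2\}$; note first that $a_1 \neq a_2$, since $a_1$ occurs in $r_1$ but not in $r_2$ whereas $a_2$ occurs in $r_2$, so $A = A_0 \sqcup \{a_1, a_2\}$. By hypothesis $r_1$ is a word over $A_0 \cup \{a_1\}$ (it avoids $a_2$) and $r_2$ is a word over $A_0 \cup \{a_2\}$ (it avoids $a_1$); thus $G_1 = \Gpres{A_0 \cup \{a_1\}}{r_1=1}$ and $G_2 = \Gpres{A_0 \cup \{a_2\}}{r_2=1}$ are genuinely one-relator groups on these alphabets.

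The crucial input is Magnus's \Fz. Applied to $G_1$: since $r_1$ is cyclically reduced and involves the generator $a_1$, the remaining generators $A_0$ freely generate a free subgroup $C_1 \leq G_1$, so the natural map $F_{A_0} \to G_1$ sending each letter of $A_0$ to itself is injective with image $C_1 \cong F_{A_0}$. Symmetrically, the natural map $F_{A_0} \to G_2$ is injective with image $C_2 \cong F_{A_0}$. Hence one may form the amalgamated free product $P = G_1 *_{F_{A_0}} G_2$, identifying $C_1$ with $C_2$ via the isomorphism that is the identity on $A_0$.

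Next I would identify $P$ with $G$. Taking the presentation of $P$ obtained by combining the presentations of $G_1$ and $G_2$ on disjoint copies of $A_0$ and adjoining the relations identifying the two copies, a sequence of Tietze transformations eliminating the duplicate copy of $A_0$ yields exactly $\Gpres{A_0 \cup \{a_1, a_2\}}{r_1=1,\ r_2=1} = G$; moreover these transformations fix every generator in $A_0 \cup \{a_1, a_2\}$, so the resulting isomorphism $P \cong G$ is the identity on generators. Finally, by the normal form theorem for amalgamated free products (see \cite{lyndonschupp}), the canonical maps $G_1 \to P$ and $G_2 \to P$ are injective; composing with $P \cong G$ shows that the natural homomorphisms $G_1 \to G$ and $G_2 \to G$ induced by the inclusions of generators are embeddings, as required.

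I expect the only non-routine step to be the application of the \Fz to secure the embeddings $F_{A_0} \hookrightarrow G_1$ and $F_{A_0} \hookrightarrow G_2$; once these are in place, both the amalgam decomposition and the Tietze identification are purely formal. One should also check the degenerate case $A_0 = \emptyset$, in which the amalgamation is along the trivial group and $P = G_1 * G_2$ is an ordinary free product, where the factors still embed.
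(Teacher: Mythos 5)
Your proposal is correct and follows essentially the same route as the paper's proof: both invoke the Freiheitssatz (using that each $r_i$ is cyclically reduced and contains $a_i$) to get free subgroups on $A\setminus\{a_1,a_2\}$ in $G_1$ and $G_2$, form the free product with amalgamation over this common free subgroup, identify the amalgam with $G$ by Tietze transformations eliminating the duplicated generators, and conclude from the standard embedding of the factors into an amalgamated free product. Your explicit treatment of the degenerate case $A\setminus\{a_1,a_2\}=\emptyset$ is a minor extra care that the paper leaves implicit.
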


\begin{proof}
The subgroup of $G_1$ generated by $A\setminus\{ a_1,a_2\}$ is free on that set by Freiheitsatz, since the letter $a_1$ occurs in $r_1$.
Likewise, writing $G_2=\Gpres{A^\prime \setminus \{a_1^\prime\}}{r_2^\prime=1}$
over a disjoint copy of $A$, the
subgroup generated by $A^\prime\setminus\{a_1^\prime,a_2^\prime\}$ is free.
So we can form the free product with amalgamation
\[
\Gpres{(A\setminus\{ a_2\})\cup (A^\prime\setminus\{ a_1^\prime\})}{
r_1=1,\ r_2^\prime=1,\ a=a^\prime\ (a\in A\setminus\{a_1,a_2\})}
\]
into which $G_1$ and $G_2$ will embed naturally.
Now perform Tietze transformations to remove the generators $a^\prime$ with
$a\in A\setminus\{a_1,a_2\}$, rename $a_2^\prime $ into $a_2$, 
and we obtain $\Gpres{A}{r_1=1,\ r_2=1}=G$.
\end{proof}

\begin{lem}
\label{lem:lettersappear}
Let $A$ and $X = \{ x_1, \ldots, x_n \}$ be two alphabets, and 
let $r = r(x_1, \ldots, x_n) \in F_X$ be a cyclically reduced word in which all of the letters 
$x_1, \ldots, x_n$ appear.
Further, let $p_1, \ldots, p_n \in F_A$, and denote by $A_i$ the set of letters of $A$ that appear in $p_i$. Suppose that  
\[
A_i \not\subseteq 
\bigcup_{j \neq i}{A_j} 
\quad
\text{for all } i = 1, \ldots, n,
\]
with $a_i\in A_i \setminus \bigcup_{j \neq i}{A_j}$. 
Then each of the letters $a_i$ $(i = 1, \ldots, n)$ appears in any cyclic reduction
of the word
$r(p_1, \ldots, p_n)$.
\end{lem}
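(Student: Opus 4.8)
The plan is to track the distinguished letter $a_i$ through free and cyclic reduction, exploiting that $a_i$ is confined to the blocks coming from $p_i$. Write $r\equiv x_{i_1}^{\epsilon_1}\dots x_{i_m}^{\epsilon_m}$ in cyclically reduced form, so that $r(p_1,\dots,p_n)\equiv p_{i_1}^{\epsilon_1}\dots p_{i_m}^{\epsilon_m}$ is a concatenation of \emph{blocks} $p_{i_k}^{\epsilon_k}$. The key observation is that, since $a_i\in A_i\setminus\bigcup_{j\ne i}A_j$, the only blocks containing $a_i^{\pm1}$ are those with $i_k=i$. I would first dispose of the easy case: the exponent sum of $a_i$ in $r(p_1,\dots,p_n)$ equals the product of the exponent sum of $a_i$ in $p_i$ with the exponent sum of $x_i$ in $r$; if this is non-zero then, since exponent sum is invariant under free reduction and under conjugation (hence under cyclic reduction), $a_i$ must survive. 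The substantial case is therefore when this product vanishes.

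For the general case I would argue that the occurrences of $a_i$ cannot all cancel. Model the reductions of the cyclic word by the usual non-crossing system of cancellation arcs; since $a_i$ can cancel only against $a_i^{-1}$, if every $a_i$ cancelled then the arcs would restrict to a non-crossing perfect matching of the set of $a_i$-positions. No such arc can join two positions inside the same block, as a block is a reduced word and so its $a_i$'s are separated by letters that genuinely survive (if two $a_i$-letters were adjacent they would carry equal, not inverse, signs). Hence an innermost arc joins an $a_i$ in one block $B=p_i^{\pm1}$ to one in a \emph{different} block $B'=p_i^{\pm1}$, with no intervening $p_i$-block and with the entire enclosed factor reducing to the empty word. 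That factor has the shape $t\,(p_{j_1}^{\delta_1}\dots p_{j_s}^{\delta_s})\,t'$, where $t,t'$ are the marker-free tail of $B$ and head of $B'$, and $x_{j_1}^{\delta_1}\dots x_{j_s}^{\delta_s}$ is the reduced, $x_i$-free, part of $r$ lying between the two occurrences of $x_i^{\pm1}$.

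The heart of the argument is to show that this intervening factor cannot reduce to the empty word. When $s\ge 1$ this should follow, by induction on word length, from a suitable (two-sided) strengthening of the statement applied to $x_{j_1}^{\delta_1}\dots x_{j_s}^{\delta_s}$: its reduced image retains some marker $a_{j_l}$, and --- crucially --- this marker lies in neither of the flanking pieces $t,t'$, which are parts of $p_i$ and hence avoid $a_{j_l}$ by marker uniqueness ($a_{j_l}\notin A_i$). Thus $a_{j_l}$ cannot be cancelled, the factor is non-trivial, and we reach a contradiction. Finally, cyclic reducedness of $r$ is what controls the seam of the cyclic word: it forbids adjacencies of the form $p_j^{\pm1}\dots p_j^{\mp1}$, so no cancellation can ``wrap around'' to eliminate a marker that the interior analysis has preserved.

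The step I expect to be the main obstacle is precisely the bookkeeping in this cancellation analysis: formulating and proving the two-sided inductive invariant that makes rigorous the claim that cancellation between consecutive blocks can never eat through a block's private markers, together with the degenerate adjacency case $s=0$ (forcing $x_i^{\epsilon}x_i^{\epsilon}$ in $r$, where some markers legitimately cancel but not all) and the seam. All of these rest on the single combinatorial fact that each $p_j$ carries a private letter $a_j$ that is absent from every other piece, which is exactly the hypothesis $A_i\not\subseteq\bigcup_{j\ne i}A_j$; without it (for instance for a single piece that is not cyclically reduced) the markers genuinely can be unwound, so this hypothesis must be used in an essential way.
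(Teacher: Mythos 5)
Your overall strategy is genuinely the paper's: track the private markers $a_i$, observe that two like markers can cancel only if the factor strictly between them freely reduces to the empty word, and derive a contradiction by trapping a foreign marker in that factor. But there is a concrete gap, and it sits exactly where you flag it. Because your blocks are single pieces $p_{i_k}^{\epsilon_k}$ (you do not collect powers), the degenerate case $s=0$ really occurs, and there your innermost-arc step yields \emph{no} contradiction rather than a messy one: if $r$ contains $x_i^{\epsilon}x_i^{\epsilon}$ and $p_i$ is not cyclically reduced, the enclosed factor $tt'$ between two like markers in adjacent $p_i$-blocks can legitimately reduce to the empty word. For instance with $p_i \equiv cbc^{-1}$ and marker $c$, in $p_ip_i$ the innermost pair $c^{-1}c$ cancels with empty enclosed factor; one must then argue separately that the \emph{outer} occurrences survive, which your framework does not do. Likewise the ``two-sided inductive invariant'' that carries the $s\geq 1$ case is announced but never formulated; since the lemma's entire content lives in these two steps, the proposal as written is a correct plan, not a proof.

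The paper closes both holes with two devices you could adopt. First, it collects powers: after a cyclic permutation (harmless, since the conclusion concerns cyclic reductions) it writes $r\equiv x_{i_1}^{\alpha_1}\cdots x_{i_k}^{\alpha_k}$ with $i_1\neq i_2\neq\cdots\neq i_k\neq i_1$, and inside each syllable tracks only the occurrences of $a_{i_j}$ that survive in $\red(p_{i_j}^{\alpha_j})$ --- at least one exists because $p_{i_j}$ is reduced, so $\red(p_{i_j}^{\alpha_j})$ contains every letter of $p_{i_j}$. This makes your $s=0$ case impossible by construction. Second, in place of induction on length it chooses, among \emph{all} cancelling pairs of like markers, one lying in syllables $l<m$ with $m-l$ minimal; then $m\neq l$ and $m\neq l+1$ (the latter because $i_{l+1}\neq i_l$), so $m\geq l+2$, and for the pair to cancel, every occurrence of $a_{i_{l+1}}$ in $p_{i_{l+1}}^{\alpha_{l+1}}$ must cancel against a like marker inside $p_{i_{l+1}}^{\alpha_{l+1}}\cdots p_{i_m}^{\alpha_m}$ (as $p_{i_l}$ contains no $a_{i_{l+1}}$), i.e.\ at gap $<m-l$: a contradiction needing no flanking tails $t,t'$ and no strengthened hypothesis. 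Finally the seam is handled not by your adjacency remark but by noting that a letter cancelled during cyclic reduction would be cancelled during free reduction of one of the two explicit rotations of $p_{i_1}^{\alpha_1}\cdots p_{i_k}^{\alpha_k}$, whose underlying $x$-words satisfy the same hypotheses, so the linear argument applies verbatim. Your exponent-sum observation is correct but superfluous once this is in place.
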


\begin{proof}
Since $r$ is cyclically reduced, and since we are concerned with cyclically reduced forms
of $r(p_1,\dots,p_n)$, we may without loss of generality assume that $r$ has the form
\[
r\equiv x_{i_1}^{\alpha_1} x_{i_2}^{\alpha_2}\dots x_{i_k}^{\alpha_k}
\]
where $ i_j\in\{1,\dots,n\}$ and $ \alpha_j\in\mathbb{Z}$ for $ j=1,\dots, k$, and
\[
i_1\neq i_2\neq i_3\dots \neq i_k\neq i_1.
\]
Consider the word
\[
r(p_1,\dots, p_n)\equiv p_{i_1}^{\alpha_1} p_{i_2}^{\alpha_2}\dots p_{i_k}^{\alpha_k},
\]
and inside it, for every $j=1,\dots, k$, consider all the occurrences of the letter 
$a_{i_j}$ 
appearing in the reduced word $\red(p_{i_j}^{\alpha_j})$.
Notice that there must be at least one such occurrence, since $p_{i_j}$ is reduced,
and hence the set of letters appearing in $p_{i_j}$ is the same as the set of
letters appearing in  $\red(p_{i_j}^{\alpha_j})$.
We claim that none of these occurrences are cancelled in the process of free reduction
of $r(p_1,\dots, p_n)$.

To see this, suppose to the contrary that two occurrences of some $a_i$ cancel each other.
Let those two occurrences appear in $p_{i_l}^{\alpha_l}$ and $p_{i_m}^{\alpha_m}$, where
$i=i_l=i_m$ and $l\leq m$. Furthermore, choose $i$, $l$ and $m$ so that $m-l$ is as small as possible.
By assumption, the occurrence of $a_i$ in $p_{i_l}$ is not cancelled when reducing to
$\red(p_{i_l}^{\alpha_l})$, and
hence we cannot have $m=l$.
Also, since $i_{l+1}\neq i_l$, the word $p_{i_{l+1}}$ contains no occurrences of $a_i=a_{i_l}$, and so
$m\neq l+1$. Therefore we must have $m\geq l+2$.
But now, in order for the two occurrences of $a_i$ in $p_{i_l}^{\alpha_l}$ and $p_{i_m}^{\alpha_m}$
to cancel each other, any occurrence of $a_{i_{l+1}}$ in
$p_{i_{l+1}}^{\alpha_{l+1}}$ must also be cancelled. Since there are no occurrences of $a_{i_{l+1}}$
in $p_{i_l}^{\alpha_l}$, it follows that these occurrences must cancel within
$p_{i_{l+1}}^{\alpha_{l+1}}\dots p_{i_m}^{\alpha_m}$, and this contradicts the minimality of $m-l$.
This proves the claim that no two occurrences of some $a_i$ within 
some $p_{i_l}^{\alpha_l}$ and $p_{i_m}^{\alpha_m}$ ($i_l=i_m=i$)
cancel each other
in the process of free reduction of $r(p_1,\dots,p_n)$.

Now suppose that a cancellation of some $a_{i_l}$ appearing in some $p_{i_l}^{\alpha_l}$
can happen during the cyclic reduction of $r(p_1,\dots,p_n)$.
Then this occurrence of $a_{i_l}$ would be cancelled during the free reduction of at least one of the words
\[
p_{i_l}^{\alpha_l} \dots p_{i_{k}}^{\alpha_{k}} p_{i_{1}}^{\alpha_{1}}\dots p_{i_{l-1}}^{\alpha_{l-1}}
\quad\text{or}\quad
p_{i_{l+1}}^{\alpha_{l+1}}\dots p_{i_{k}}^{\alpha_{k}} p_{i_{1}}^{\alpha_{1}}\dots p_{i_l}^{\alpha_l}.
\]
However, the words
\[
x_{i_l}^{\alpha_l} \dots x_{i_{k}}^{\alpha_{k}} x_{i_{1}}^{\alpha_{1}}\dots x_{i_{l-1}}^{\alpha_{l-1}}
\quad\text{and}\quad
x_{i_{l+1}}^{\alpha_{l+1}}\dots x_{i_{k}}^{\alpha_{k}} x_{i_{1}}^{\alpha_{1}}\dots x_{i_l}^{\alpha_l}
\]
satisfy the original assumptions made about $r$, and hence such cancellation cannot take place by the argument from the previous paragraph.
This completes the proof of the lemma.
\end{proof}

\begin{proof}[Proof of Theorem \ref{thm:markers}]
Let $X=\{ x_1,\dots, x_n\}$. Suppose $p_1,\dots, p_n\in\overline{A}^\ast$,
and let $A_i$ be the set of letters appearing in $p_i$. For every $i=1,\dots, n$ pick
$a_i\in A_i\setminus\bigcup_{j\neq i} A_j$.
We need to show that for every $r\in \overline{X}^\ast$ the subgroup
of
\begin{equation}
\label{eq31}
G=\Gpres{A}{r(p_1,\dots, p_n)=1}
\end{equation}
generated by $p_1,\dots, p_n$ is naturally isomorphic to 
$\Gpres{X}{r=1}$.
Clearly, we may assume without loss of generality that $r$ is cyclically reduced.

We will prove the assertion by induction on the length of the word $r(p_1,\dots,p_n)$.
If $|r(p_1,\dots,p_n)|=0$, Lemma \ref{lem:lettersappear} gives $|r|=0$, and the assertion is obvious.
It is also obvious when $|p_i|=1$ for all $i=1,\dots,n$.
So, suppose that some $p_i$ has length greater than $1$.
Without loss we may assume $|p_n|>1$.
In the presentation \eqref{eq31} for $G$ introduce a new generator $x_n$ satisfying $x_n=p_n$:
\begin{align}
\label{eq33a}
G&=\Gpres{A,x_n}{p_nx_n^{-1}=1,\ r(p_1,\dots,p_{n-1},x_n)=1}
\\
\label{eq33}
&=\Gpres{A,x_n}{p_nx_n^{-1}=1,\ \overline{r}=1},
\end{align}
where $\overline{r}$ denotes a cyclically reduced form of $r(p_1,\dots,p_{n-1},x_n)$.
We claim that the presentation \eqref{eq33} satisfies the assumptions of Lemma \ref{lem:2rel}.
Indeed:
\begin{itemize}[leftmargin=5mm,itemsep=1mm]
\item
The word $p_nx_n^{-1}$ is cyclically reduced because $p_n$ is a reduced word over $\overline{A}$ and $x_n\not\in\overline{A}$.
\item
The word $\overline{r}$ is cyclically reduced by assumption.
\item
The letter $a_n$ appears in $p_n$ and in none of $p_1,\dots,p_{n-1}$;
hence $a_n$ will appear in $p_nx_n^{-1}$ but not in $r(p_1,\dots,p_{n-1},x_n)$,
and hence not in $\overline{r}$ either.
\item
The letter $a_1$ appears in $p_1$, but not in any other $p_2,\dots,p_n$;
hence it will appear in $r(p_1,\dots,p_{n-1},x_n)$ by Lemma \ref{lem:lettersappear},
but not in $p_nx_n^{-1}$.
\end{itemize}
By Lemma \ref{lem:2rel}, the group
\begin{equation}
\label{eq34}
\Gpres{A\setminus\{a_n\},x_n}{\overline{r}=1}=\Gpres{A\setminus\{a_n\},x_n}{
r(p_1,\dots,p_{n-1},x_n)}
\end{equation}
embeds naturally into $G$ as defined by \eqref{eq33}.
Note that $|r(p_1,\dots,p_{n-1},x_n)|<|r(p_1,\dots,p_n)|$ because
$|p_n|\geq 2$.
Furthermore, note that the words $p_1,\dots,p_{n-1},x_n$ satisfy all the original assumptions about $p_1,\dots, p_n$ -- they are reduced, and each contains a letter not appearing in any of the others.
Therefore, the inductive hypothesis applies, and the subgroup of the group \eqref{eq34} generated
by $p_1,\dots, p_{n-1},x_n$ is naturally isomorphic to $\Gpres{X}{r=1}$.
But, inside $G$, this subgroup coincides with the subgroup generated by $p_1,\dots,p_n$,
and the theorem is proved.
\end{proof}

We can generalise the above condition a bit further:

\begin{thm}
\label{thm:posets}
Let  $A=B\cup C$ where $B$ and $C$ are disjoint alphabets,
and let $p_1,\dots,p_n\subseteq  F_A$. 
Denoting by $c(p_i)$ the set of letters from $C$ that appear in $p_i$, 
suppose that there is a bijection $\mu : \{p_1,\dots,p_n\}\rightarrow C$ such that at least one of the following two conditions holds for every $i=1,\dots, n$:
\begin{enumerate}[leftmargin=8mm,itemsep=1mm,label=\textup{(\roman*)}]
\item \label{it:pos1}
$c(p_i) = \{ \mu(p_i) \}$; or 
\item \label{it:pos2}
$\mu(p_i)$ occurs precisely once in $p_i$ and 
\[
c(p_i) \setminus \bigcup_{1 \leq j \leq i-1}c(p_j) = \{ \mu(p_i) \}.
\]
\end{enumerate}
Then $p_1,\dots, p_m$ are free for substitutions into arbitrary one-relator presentations. 
\end{thm}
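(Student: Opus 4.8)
The plan is to induct on the number $N$ of pieces satisfying condition \ref{it:pos2} but not condition \ref{it:pos1}; I will call these the \emph{type II} pieces and those satisfying \ref{it:pos1} the \emph{type I} pieces. Writing $c_i := \mu(p_i)$, so that $C = \{c_1,\dots,c_n\}$, the combinatorial backbone is the observation that $c(p_j)\subseteq\{c_1,\dots,c_j\}$ for every $j$: this is immediate for type I pieces, and for type II pieces it is exactly what \ref{it:pos2} gives. Hence $c_i\notin c(p_j)$ whenever $j<i$. In the base case $N=0$ every piece is type I, so $c(p_i)=\{c_i\}$, and $c_i$ occurs in $p_i$ but, by the previous sentence together with the type I condition, in no other $p_j$. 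Thus each $p_i$ carries a private letter and the hypothesis of Theorem \ref{thm:markers} is met, giving the conclusion at once.

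For the inductive step, suppose $N>0$ and let $m$ be the \emph{largest} index of a type II piece. Choosing $m$ maximal is the crucial point: every $p_j$ with $j>m$ is then type I, so $c_m\notin c(p_j)$ for such $j$, while $c_m\notin c(p_j)$ for $j<m$ by the backbone observation; combined with the fact that $c_m$ occurs exactly once in $p_m$ (from \ref{it:pos2}), this shows that $c_m$ occurs exactly once in the whole list $p_1,\dots,p_n$, namely once inside $p_m$. Fix an arbitrary one-relator presentation $\Gpres{X}{r=1}$ and set $G=\Gpres{A}{r(p_1,\dots,p_n)=1}$. I would now run a short sequence of Tietze transformations: adjoin a generator $x_m$ with the relation $x_m=p_m$; rewrite the relator as $r(p_1,\dots,p_{m-1},x_m,p_{m+1},\dots,p_n)$ (legitimate, since the two words agree modulo $x_m=p_m$); and finally, writing $p_m\equiv u_m c_m^{\eta} v_m$ with the displayed $c_m$ its unique occurrence in $p_m$, use $x_m=p_m$ to solve $c_m=(u_m^{-1}x_m v_m^{-1})^{\eta}$ and eliminate the generator $c_m$. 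Because $c_m$ occurs nowhere else in the presentation, this elimination leaves the rewritten relator untouched and yields
\[
G=\Gpres{(A\setminus\{c_m\})\cup\{x_m\}}{r(q_1,\dots,q_n)=1},\qquad q_i\equiv p_i\ (i\neq m),\quad q_m\equiv x_m .
\]

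It remains to verify that the new list $q_1,\dots,q_n$ over $A'=B\cup(C\setminus\{c_m\})\cup\{x_m\}$ again satisfies the hypotheses of the theorem, now with only $N-1$ type II pieces. I would keep the same ordering and set $\mu'(q_i)=c_i$ for $i\neq m$ and $\mu'(q_m)=x_m$, regarding $x_m$ as a new letter of the $C$-alphabet $C'=(C\setminus\{c_m\})\cup\{x_m\}$. The new piece $q_m=x_m$ now satisfies \ref{it:pos1}; each type I piece with index $>m$ is unchanged and still satisfies \ref{it:pos1}; and each piece with index $<m$ is literally unchanged and its defining condition refers only to markers $c_1,\dots,c_{i-1}$, none of which is $c_m$. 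Hence all conditions persist and the count of type II pieces drops by one, so the inductive hypothesis applies at this same $r$: the subgroup $\Ggen{q_1,\dots,q_n}$ of $G$ is isomorphic to $\Gpres{X}{r=1}$ via $x_i\mapsto q_i$. Since $x_m=p_m$ in $G$, we have $\Ggen{q_1,\dots,q_n}=\Ggen{p_1,\dots,p_n}$ with $q_i\leftrightarrow p_i$, which is exactly the required conclusion. The step I would be most careful about is the bookkeeping of this last paragraph confirming that the poset conditions transfer under the relabelling; the reason the argument stays clean is that, by \emph{eliminating} $c_m$ rather than tracking it, I sidestep any survival-of-markers analysis of the sort needed in Lemma \ref{lem:lettersappear}, and I never need to assume $r$ reduced.
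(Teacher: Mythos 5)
Your proof is correct and follows essentially the same strategy as the paper's: both arguments anchor at Theorem~\ref{thm:markers} and dispose of the type~\ref{it:pos2} pieces one at a time via a Tietze pair that exchanges the uniquely-occurring marker letter for a fresh single-letter generator. The only difference is direction — the paper reorders the pieces and substitutes them bottom-up in place of dummy letters $d_{m+1},\dots,d_n$, whereas you eliminate the marker $c_m$ of the \emph{last} type~\ref{it:pos2} piece and induct downward on their number — a mirror image of the same argument.
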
 

\begin{remark}
Theorem \ref{thm:markers} is indeed a special case of Theorem \ref{thm:posets},
corresponding to the case where condition \ref{it:pos1} is satisfied for every $i=1,\dots,n$.
\end{remark}

\begin{proof}
We may reorder $p_1,\dots, p_n$ so that $p_1,\dots, p_m$ satisfy \ref{it:pos1} and
$p_{m+1},\dots p_n$ satisfy \ref{it:pos2} for some $1\leq m\leq n$.
For $i=1,\dots ,n$ let $c_i=\mu(p_i)$ so that $C=\{c_1,\dots,c_n\}$, and let $x_1,\dots, x_n,d_{m+1},\dots, d_n$
be new letters, not in $A$.

We need to prove that for every word $r=r(x_1,\dots,x_n)$ over $\overline{\{x_1,\dots,x_n\}}$
the subgroup of
\begin{equation}
\label{eq:pos2}
\Gpres{A}{r(p_1,\dots,p_n)=1}
\end{equation}
generated by $p_1,\dots,p_n$ is naturally isomorphic to $\Gpres{X}{r=1}$, where $X = \{x_1, \ldots, x_n \}$.
To begin with, by Theorem \ref{thm:markers}, the subgroup of
\[
\Gpres{B,c_1,\dots,c_m,d_{m+1},\dots, d_n}{r(p_1,\dots,p_m,d_{m+1},\dots, d_n)=1}
\]
generated by $p_1,\dots,p_m,d_{m+1},\dots, d_n$ is naturally isomorphic to
$\Gpres{X}{r=1}$. We treat this as the inductive anchor for 
the claim that for every $k=0,\dots, n-m$, the subgroup of
\begin{equation}
\label{eq:pos1}
\Gpres{B,c_1,\dots,c_{m+k},d_{m+k+1},\dots, d_n}{
r(p_1,\dots, p_{m+k},d_{m+k+1},\dots, d_n)=1}
\end{equation}
generated by $p_1,\dots,p_{m+k},d_{m+k+1},\dots, d_n$ is naturally isomorphic to
$\Gpres{X}{r=1}$.
Suppose the claim is true for some $k$, and we prove it for $k+1$.
First we write $p_{m+k+1}\equiv qc_{m+k+1}r$, where
$q,r\in \overline{B\cup\{c_1,\dots, c_{m+k}\}}^\ast$, because of \ref{it:pos2}.
Then we introduce a new generator $c_{m+k+1}$ into \eqref{eq:pos1}, via the relation
$c_{m+k+1}=q^{-1}d_{m+k+1} r^{-1}$. Then we use this relation to eliminate 
the generator $d_{m+k+1}$ via $d_{m+k+1}=qc_{m+k+1}r=p_{m+k+1}$.
Thus we obtain the presentation
\[
\Gpres{B,c_1,\dots,c_{m+k},c_{m+k+1},d_{m+k+2},\dots, d_n}{
r(p_1,\dots, p_{m+k},p_{m+k+1},d_{m+k+2},\dots, d_n)=1}
\]
which defines a group naturally isomorphic to the group defined by \eqref{eq:pos1}.
The subgroup of this group generated by $p_1,\dots,p_{m+k+1},d_{m+k+2},\dots,d_n$
is isomorphic to the subgroup of \eqref{eq:pos1} generated by $p_1,\dots,p_{m+k},d_{m+k+1},\dots,d_n$, which, in turn, is isomorphic to $\Gpres{X}{r=1}$ by induction.
This completes the inductive proof. Putting $k=n-m$ we obtain that the subgroup of
\eqref{eq:pos2} generated by $p_1,\dots,p_n$ is naturally isomorphic to
$\Gpres{X}{r=1}$, and this completes the proof of the theorem.
\end{proof}

\begin{cor}
\label{cor:ourMakanin}
Let $M = \Ipres{A}{r=1}$ and let $r \equiv r_1 \ldots r_k$ be the factorisation of $r$ into minimal invertible pieces. If there is a basis $p_1, \ldots, p_n$ of $\Ggen{r_1, \ldots, r_k} \leq F_A$ satisfying the condition in the 
statement of Theorem~\ref{thm:posets}  then the group of units of $M$ is a one-relator group. 
Specifically, expressing each $r_j=r_j^\prime(p_1,\dots, p_n)$ in $F_A$, 
where $r_j^\prime\in \overline{\{x_1,\dots,x_n\}}^\ast$, the group of
units of $M$ is defined by
\[
\Gpres{x_1,\dots,x_n}{r_1^\prime \dots r_k^\prime=1}.
\]
\end{cor}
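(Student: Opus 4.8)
The plan is to obtain this as a direct combination of Theorem~\ref{thm:posets} and Corollary~\ref{cor:freefor}, both of which already carry out the substantive work. First I would record that, by hypothesis, the free subgroup $H=\Ggen{r_1,\dots,r_k}\leq F_A$ has a basis $p_1,\dots,p_n$ meeting the conditions of Theorem~\ref{thm:posets}. Feeding this basis into Theorem~\ref{thm:posets} yields immediately that $\{p_1,\dots,p_n\}$ is free for substitutions into arbitrary one-relator presentations. In particular $H$ has a basis which is free for substitution into any one-relator presentation, which is precisely the hypothesis of Corollary~\ref{cor:freefor}.

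Next I would apply Corollary~\ref{cor:freefor} to $M=\Ipres{A}{r=1}$, whose minimal invertible pieces are precisely $r_1,\dots,r_k$. This gives at once that the group of units $U$ of $M$ is a one-relator group, presented by $\Gpres{x_1,\dots,x_n}{r^\prime(r_1^\prime,\dots,r_k^\prime)=1}$, where $r\equiv r^\prime(r_1,\dots,r_k)$ and each piece is written in the chosen basis as $r_j=r_j^\prime(p_1,\dots,p_n)$ with $r_j^\prime\in\overline{\{x_1,\dots,x_n\}}^\ast$.

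It only remains to put the relator into the claimed form. Since $r\equiv r_1 r_2\dots r_k$ is, by definition, the ordered factorisation of $r$ into minimal invertible pieces, the connecting word $r^\prime$ over the $k$ piece-variables is simply the positive concatenation of those variables. Substituting the words $r_j^\prime$ therefore gives $r^\prime(r_1^\prime,\dots,r_k^\prime)\equiv r_1^\prime r_2^\prime\dots r_k^\prime$, so that $U=\Gpres{x_1,\dots,x_n}{r_1^\prime\dots r_k^\prime=1}$, as required. No genuine obstacle arises, the result being an immediate specialisation; the only point deserving a word of care is the identification of $r^\prime$ with the trivial concatenation, which is exactly what it means for $r_1,\dots,r_k$ to be the ordered minimal invertible factorisation of $r$.
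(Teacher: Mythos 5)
Your proposal is correct and matches the paper's own proof, which likewise derives the corollary as an immediate combination of Theorem~\ref{thm:posets} and Corollary~\ref{cor:freefor}. Your added remark identifying the connecting word $r^\prime$ with the plain concatenation $x_1\dots x_k$ (since $r\equiv r_1\dots r_k$ is the ordered minimal invertible factorisation) is a correct, if implicit in the paper, bookkeeping step.
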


\begin{proof}
This follows immediately from Theorem \ref{thm:posets} and Corollary \ref{cor:freefor}. 
\end{proof}

As a special case of Theorem \ref{thm:posets} we see that any sets $\{p_1,\dots,p_n\}\subseteq F_A$ of the following types are free for substitutions into any one-relator presentations:
\begin{enumerate}[leftmargin=10mm,itemsep=1mm,label=\textup{(F\arabic*)}]
\item
\label{it:F1}
$p_i$s are powers of distinct generators from $A$
(this is Corollary 4.10.2 in \cite{mks});
\item
\label{it:F2}
$p_i$s are words over disjoint subalphabets of $A$;
\item
\label{it:F3}
for each $p_i$ there exists a letter $a_i\in A$ which appears in $p_i$ precisely once, and does not appear in any $p_j$ with $j<i$.
\end{enumerate}
We do not know whether the assumptions in  Theorems \ref{thm:posets} or \ref{thm:markers} actually imply that the sets in question are free for substitutions in \emph{any} presentations. However, this is the case for the above-listed special instances as we shall see in the next theorem.

At this point it is very natural to wonder whether for a set of pieces to be free for substitutions it might suffice just to assume that the set of words is Nielsen reduced. 
We will see later in Subsection~\ref{subsection_Higman} that the answer to this question is no; see Proposition~\ref{thm_Higman}. 
\begin{thm}
\label{thm:FreeInAny}
If $p_1,\dots, p_n\in F_A$ satisfy any of 
\ref{it:F1}, \ref{it:F2} or \ref{it:F3}
then $\{p_1,\dots, p_n\}$ is free for substitution into any presentation.
\end{thm}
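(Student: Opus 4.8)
The plan is to show, in each of the three cases, that the natural surjection from $K=\Gpres{X}{r_j=1\ (j\in I)}$ onto the subgroup $\Ggen{p_1,\dots,p_n}$ of $G=\Gpres{A}{r_j(p_1,\dots,p_n)=1\ (j\in I)}$ induced by $x_i\mapsto p_i$ is injective; this surjection is always well defined since the words $r_j(p_1,\dots,p_n)$ are relators of $G$. Letters of $A$ occurring in no $p_i$ contribute only a free factor to $G$ and may be discarded, so I assume throughout that every letter of $A$ occurs in some $p_i$. Since \ref{it:F1} is the instance of \ref{it:F2} in which each $p_i=a_i^{m_i}$ is a word over the singleton alphabet $\{a_i\}$, it suffices to treat \ref{it:F2} and \ref{it:F3}.

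For \ref{it:F3} I would argue by Tietze transformations. Adjoin to the presentation of $G$ the generators $x_1,\dots,x_n$ together with the relations $x_i=p_i$, noting that $\Ggen{p_1,\dots,p_n}$ then coincides with $\Ggen{x_1,\dots,x_n}$. The marker letters $a_1,\dots,a_n$ are pairwise distinct, and by the hypothesis of \ref{it:F3} the only markers occurring in $p_i$ have index at most $i$. Hence, eliminating $a_1,\dots,a_n$ in this order, the relation $x_i=p_i$ (in which $a_i$ occurs exactly once) can at each stage be solved for $a_i$, expressing it as a word in $x_1,\dots,x_i$ and the non-marker letters. After all eliminations the only surviving relations are $r_j(x_1,\dots,x_n)=1$, so $G$ is presented as a free product of $K$ with the free group on the non-marker letters; in particular $\Ggen{x_1,\dots,x_n}\cong K$, as required.

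For \ref{it:F2} I would set up an inductive chain of amalgamated free products. Writing $A=A_1\sqcup\cdots\sqcup A_n$ with $p_i\in F_{A_i}$, define for $0\le i\le n$ the group $K_i=\Gpres{A_1,\dots,A_i,x_{i+1},\dots,x_n}{r_j(p_1,\dots,p_i,x_{i+1},\dots,x_n)=1\ (j\in I)}$, so that $K_0=K$ and $K_n=G$, and let $d_i$ be the order of $x_i$ in $K$ (equivalently, by induction, in $K_{i-1}$, since an element's order is the same in any subgroup containing it). The passage from $K_{i-1}$ to $K_i$ amounts to imposing the single relation $x_i=p_i$ between $x_i\in K_{i-1}$ and $p_i\in F_{A_i}$; this is the pushout of the homomorphisms $\Z\to K_{i-1}$, $1\mapsto x_i$, and $\Z\to F_{A_i}$, $1\mapsto p_i$, which I claim equals the amalgamated free product $K_{i-1}*_{\Z/d_i}\bigl(F_{A_i}/\langle\langle p_i^{d_i}\rangle\rangle\bigr)$, the amalgamated subgroup being $\Ggen{x_i}\cong\Z/d_i$ on one side and $\Ggen{p_i}$ on the other. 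Granting this, both factors embed, so $K_{i-1}\hookrightarrow K_i$ carries $\Ggen{p_1,\dots,p_{i-1},x_i,\dots,x_n}$ isomorphically onto $\Ggen{p_1,\dots,p_i,x_{i+1},\dots,x_n}$ (here $x_i$ is identified with $p_i$), so the inductive hypothesis $\Ggen{p_1,\dots,p_{i-1},x_i,\dots,x_n}\cong K$ is preserved; taking $i=n$ gives $\Ggen{p_1,\dots,p_n}\cong K$.

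The main obstacle is the claim just granted, and specifically the torsion case $d_i<\infty$. One cannot amalgamate $K_{i-1}$ with $F_{A_i}$ directly over $\Z$, because $x_i$ has finite order in $K_{i-1}$ whereas $p_i$ has infinite order in $F_{A_i}$; the pushout forces $p_i^{d_i}=1$, and the construction yields a genuine amalgam over $\Z/d_i$ only if $p_i$ has order \emph{exactly} $d_i$ in the one-relator group $F_{A_i}/\langle\langle p_i^{d_i}\rangle\rangle$. Writing $p_i=q^{e}$ with $q$ not a proper power, this follows from the classical description of torsion in one-relator groups \cite{mks}: in $\Gpres{A_i}{q^{e d_i}=1}$ the element $q$ has order exactly $e d_i$, whence $p_i=q^{e}$ has order $d_i$. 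The case $d_i=\infty$ is an honest amalgam over $\Z$ and is immediate, while $d_i=1$ degenerates to a free product, so the torsion analysis is the only genuinely delicate point.
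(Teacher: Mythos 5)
Your proposal is correct and takes essentially the same route as the paper's proof: \ref{it:F1} is absorbed into \ref{it:F2}; case \ref{it:F2} is handled by the same inductive chain of groups and free products amalgamated over cyclic subgroups, with the Magnus--Karrass--Solitar torsion theorem (the paper cites \cite[Corollary 4.4.11]{mks}) supplying the key fact that $p_i$ has order exactly $d_i$ in $\Gpres{A_i}{p_i^{d_i}=1}$; and case \ref{it:F3} is the same Tietze-transformation argument, merely run in the opposite direction (the paper introduces the marker letters as redundant generators into $K\ast F_{A'}$ and eliminates the $x_j$, while you adjoin the $x_i$ to $G$ and eliminate the markers). If anything, your ordered elimination of $a_1,\dots,a_n$ in case \ref{it:F3} is slightly more careful than the paper's simultaneous introduction, since under \ref{it:F3} a marker $a_i$ may still occur in pieces $p_j$ with $j>i$.
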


\begin{proof}
\ref{it:F1} is a special case of \ref{it:F2}, so there is no need to prove it separately. Suppose that \ref{it:F2} is satisfied.
We need to prove that for any group
$G=\Gpres{X}{r_i=1\ (i\in I)}$, with $X=\{x_1,\dots,x_n\}$, it naturally embeds
into
\begin{equation}
\label{eq:FIA1}
\Gpres{A}{r_i(p_1,\dots,p_n)=1\ (i\in I)}.
\end{equation}
To this end, for $k=1,\dots,n$, let $A_k\subseteq A$ be the set of letters that appear in $p_i$,
let $A^\prime=A\setminus(A_1\cup\dots\cup A_n)$, and let
\[
G_k=\Gpres{A_1,\dots,A_{k-1},A^\prime,x_k,\dots, x_n}{
r_i( p_1,\dots,p_{k-1},x_k,\dots,x_n)=1\ (i\in I  )}.
\]
We claim that for all $k=1,\dots,n-1$, the group $G_k$ embeds naturally into $G_{k+1}$ via
\begin{alignat*}{2}
a &\mapsto a &\hspace{10mm}& (a\in A_1\cup\dots\cup A_{k-1}\cup A^\prime),\\
x_k&\mapsto p_k, &&\\
x_j&\mapsto x_j &&(j=k+1,\dots,n).
\end{alignat*}
Once the claim is proved, composing all these embeddings together with the obvious embedding
of $\Gpres{X}{r_i=1\ (i\in I)}$ into
\[
G_1=\Gpres{A^\prime,x_1,\dots,x_n}{r_i(x_1,\dots,x_n)=1\ (i\in I)}
=\Gpres{X}{r_i=1\ (i\in I)}\ast F_{A^\prime}
\]
gives the desired embedding of $\Gpres{X}{r_i=1\ (i\in I)}$ into
\eqref{eq:FIA1}.

To prove the claim, let $m$ be the order of $x_k$ in $G_k$ if this is finite, and otherwise let $m=0$.
Note that by \cite[Corollary 4.4.11]{mks}, the element $p_k$ in the group $\Gpres{A_k}{p_k^m=1}$ has finite order if and only if $m>0$, and that in this case the order is precisely $m$.
It follows that we can form the free product with amalgamation
of the group $G_k$ with the group  $\Gpres{A_k}{p_k^m=1}$ amalgamating
the subgroup $\Ggen{x_k}$ of the former with $\Ggen{p_k}$ of the latter.
This group naturally embeds $G_k$, and has the presentation
\[
\Gpres{A_1,\dots,A_k,A^\prime,x_k,\dots,x_n}{
r_i(p_1,\dots,p_{k-1},x_k,\dots,x_n)=1\ (i\in I),\ 
p_k^m=1,\ x_k=p_k}.
\]
Eliminate $x_k$ using the last relation in this presentation, to obtain
\[
\Gpres{A_1,\dots,A_k,A^\prime,x_{k+1},\dots,x_n}{
r_i(p_1,\dots,p_{k-1},p_k,\dots,x_n)=1\ (i\in I),\ 
p_k^m=1}.
\]
Notice that the specified mapping is certainly a homomorphism, and hence
$p_k^m$ is a consequence of the 
relations $r_i(p_1,\dots,p_k,x_{k+1},\dots,x_n)=1$ ($i\in I$).
Eliminating it yields the presentation for $G_{k+1}$ and proves the claim,
and hence this case of the theorem.

Suppose now that (F3) holds.
Write
\[
p_j\equiv p_j^\prime a_j^{\epsilon_j}p_j^{\prime\prime},\quad
j=1,\dots,n,
\]
and let $A^\prime=A\setminus\{a_1,\dots,a_n\}$.
Let $X=\{x_1,\dots,x_n\}$ and consider arbitrary
$r_i=r_i(x_1,\dots,x_n)\in\overline{X}^\ast$ ($i\in I$).
We need to show that the group
\begin{equation}
\label{eq:FIA12}
\Gpres{X}{r_i(x_1,\dots,x_n)=1\ (i\in I)}
\end{equation}
naturally embeds into
\begin{equation}
\label{eq:FIA13}
\Gpres{A}{r_i(p_1,\dots,p_n)=1\ (i\in I)}.
\end{equation}
It is clear that \eqref{eq:FIA12} embeds into
\begin{equation}
\label{eq:FIA11}
\Gpres{A^\prime,X}{r_i(x_1,\dots,x_n)=1\ (i\in I)}=
\Gpres{X}{r_i(x_1,\dots,x_n)=1\ (i\in I)}\ast F_{A^\prime}.
\end{equation}
By assumption $p_j^\prime,p_j^{\prime\prime}\in \overline{A^\prime}^\ast$, so 
in \eqref{eq:FIA11} we can introduce redundant generators
$a_1,\dots,a_n$ as follows:
\begin{align*}
&\Gpres{A^\prime,a_1,\dots,a_n,X}{r_i(x_1,\dots,x_n)=1\ (i\in I),\ 
a_j^{\epsilon_j}=(p_j^\prime)^{-1}x_j(p_j^{\prime\prime})^{-1}\ (j=1,\dots,n)}
\\
=&
\Gpres{A,X}{r_i(x_1,\dots,x_n)\ (i\in I),\ 
x_j=p_j\ (j=1,\dots,n)}.
\end{align*}
Eliminating the $x_j$ from the last presentation yields the group \eqref{eq:FIA13}, showing that
it is isomorphic to \eqref{eq:FIA11}, and thus naturally embedding \eqref{eq:FIA12}, as required.
\end{proof}

\begin{cor}
\label{cor:ourMakanin1}
Let $M = \Ipres{A}{r_i=1\ (i\in I)}$ be any special inverse monoid, and let $r_i \equiv r_{i1} \ldots r_{ik_i}$ be the factorisation of $r_i$ into minimal invertible pieces. If there is a basis $p_1, \ldots, p_n$ of $\Ggen{\{ r_{ij}\::\: i\in I,\ 1\leq j\leq k_i\}} \leq F_A$ satisfying any of 
\ref{it:F1}, \ref{it:F2} or \ref{it:F3},  then the group of units $U$ of $M$ can be defined by a presentation with generators $A$ and $|I|$ many defining relations.
Specifically, if each $r_{ij}$ is written as $r_{ij}=r_{ij}^\prime(p_1,\dots,p_n)$, where
$r_{ij}^\prime\in\overline{\{x_1,\dots,x_n\}}^\ast$ then $U$ is defined by
\[
U=\Gpres{x_1,\dots,x_n}{r_{i1}^\prime\dots r_{ik_i}^\prime=1\ (i\in I)}.
\]
\end{cor}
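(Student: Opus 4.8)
The plan is to deduce this directly from Theorem \ref{thm:FreeInAny} and Theorem \ref{thm:Usandwiched}, mirroring the way Corollary \ref{cor:ourMakanin} was obtained in the one-relator case from Theorem \ref{thm:posets} and Corollary \ref{cor:freefor}, but now permitting the index set $I$ to be arbitrary. First I would assemble the data required by Theorem \ref{thm:Usandwiched}. Put $Y=\{x_1,\dots,x_n\}$ and let $\phi\colon F_Y\to H$ be the map $x_i\mapsto p_i$, where $H=\Ggen{\{r_{ij}\::\:i\in I,\ 1\leq j\leq k_i\}}\leq F_A$. Since $p_1,\dots,p_n$ is a \emph{basis} of the free group $H$, the map $\phi$ is an isomorphism, exactly as the theorem demands. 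Writing each $r_{ij}=r_{ij}^\prime(p_1,\dots,p_n)$ in $F_A$ gives $\phi^{-1}(r_{ij})=r_{ij}^\prime$, so the group $K$ produced by Theorem \ref{thm:Usandwiched} is precisely $\Gpres{x_1,\dots,x_n}{r_{i1}^\prime\dots r_{ik_i}^\prime=1\ (i\in I)}$, the presentation claimed in the statement.

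The key step is to verify the hypothesis of Theorem \ref{thm:Usandwiched}, namely that $\phi$ induces an embedding of $K$ into $G=\Gpres{A}{r_i=1\ (i\in I)}$. By Theorem \ref{thm:FreeInAny}, since $\{p_1,\dots,p_n\}$ satisfies one of \ref{it:F1}, \ref{it:F2}, \ref{it:F3}, it is free for substitution into \emph{any} presentation, in particular into the presentation defining $K$. By Definition \ref{defn:freefor} this says exactly that the subgroup generated by $p_1,\dots,p_n$ inside $\Gpres{A}{(r_{i1}^\prime\dots r_{ik_i}^\prime)(p_1,\dots,p_n)=1\ (i\in I)}$ is isomorphic to $K$ via $x_i\mapsto p_i$. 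I would then match this ambient group with $G$: the substituted relator $(r_{i1}^\prime\dots r_{ik_i}^\prime)(p_1,\dots,p_n)$ freely reduces, using $r_{ij}^\prime(p_1,\dots,p_n)=r_{ij}$ in $F_A$ and $r_i\equiv r_{i1}\dots r_{ik_i}$, to $\red(r_{i1}\dots r_{ik_i})=\red(r_i)$; since a group presentation is unchanged by replacing a relator with its free reduction, this presentation defines the same group as $\Gpres{A}{r_i=1}=G$. Hence the subgroup of $G$ generated by $p_1,\dots,p_n$ is isomorphic to $K$, which is precisely the embedding required.

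Granted this embedding, Theorem \ref{thm:Usandwiched} yields that $\phi$ induces an isomorphism between $K$ and the subgroup of $M$ generated by the pieces; by Lemma \ref{IMM:gens} the latter subgroup is exactly the group of units $U$. This gives $U\cong\Gpres{x_1,\dots,x_n}{r_{i1}^\prime\dots r_{ik_i}^\prime=1\ (i\in I)}$, a presentation on $n$ generators (note $n\leq|A|$ in each of cases \ref{it:F1}--\ref{it:F3}) with exactly $|I|$ defining relations, as required. I do not expect a serious obstacle here, as the result is essentially an application of the machinery already in place; the only point needing genuine care — the main, if minor, difficulty — is the bookkeeping in the previous paragraph identifying the ambient group $\Gpres{A}{(r_{i1}^\prime\dots r_{ik_i}^\prime)(p_1,\dots,p_n)=1\ (i\in I)}$ of Definition \ref{defn:freefor} with $G$, i.e.\ confirming that substituting the basis $p_1,\dots,p_n$ into the abstract relators and freely reducing recovers precisely $\red(r_i)$.
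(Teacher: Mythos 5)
Your proposal is correct and follows essentially the same route as the paper: the paper proves this corollary by the same argument as Corollary \ref{cor:ourMakanin}, i.e.\ by combining the free-for-substitution property (here supplied by Theorem \ref{thm:FreeInAny} for conditions \ref{it:F1}--\ref{it:F3}) with Definition \ref{defn:freefor} and Theorem \ref{thm:Usandwiched}. Your careful verification that the substituted relators freely reduce to $\red(r_i)$, so that the ambient group of Definition \ref{defn:freefor} is indeed $G=\Gpres{A}{r_i=1\ (i\in I)}$, is exactly the bookkeeping the paper leaves implicit.
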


\begin{proof}
This analogous to Corollary \ref{cor:ourMakanin}.
\end{proof}

\section{An approach to computing pieces via regular languages}
\label{sec:Benois}

There already is in the literature an algorithm for computing a decomposition into invertible pieces. It was originally introduced by Adjan, and modified by Zhang. It 
was designed for the setting of special monoid presentations, rather than inverse monoid presentations, but in fact remains valid for the latter as well. 

The description we now give of the Adjan overlap algorithm follows 
that given by Lallement in \cite{Lallement1974}, who was chiefly concerned with
 one relator monoids $\Mpres{A}{r=1}$.  
Here we shall explain the natural generalisation of that algorithm to arbitrary finitely presented special monoids 
$\Mpres{A}{r_i \; (i \in I)}$. 
In general, this algorithm will compute a decomposition of the relators into pieces, but  
these will not in general be the minimal pieces; see Example~\ref{ex:AdjanFailsForNonOR} below. 
However, in the particular case of one-relator monoids $\Mpres{A}{r=1}$ an important theorem of Adjan \cite{adjan} shows that this algorithm does compute the minimal invertible pieces of the defining relator $r$.

Let $M= \Mpres{A}{r_i=1 \; (i \in I)}$ be a special monoid and set $R = \{r_i \::\: i \in I\}$. 
We say that a submonoid $T$ of the free monoid $A^*$ has \emph{property $(\A)$} if    

\begin{enumerate}[leftmargin=10mm,itemsep=1mm,label=\textup{($\A$)}]
\item For all $\alpha, \beta, \gamma \in A^*$, if $\alpha\beta \in T$ and $\beta\gamma \in T$ then $\{\alpha,\beta,\gamma\} \subseteq T$.      
\end{enumerate}

Let $U(R)$ be the smallest submonoid of $A^*$ such that $R \subseteq U(R)$ and $U(R)$ satisfies ($\mathcal{A}$).

To see that the monoid $U(R)$ exists note that the set of submonoids of $A^*$ containing $R$ and satisfying ($\A$) is non-empty (since e.g. $A^*$ satisfies these properties), and it follows from the definitions that the intersection of any family of submonoids of $A^*$ with these properties again gives a submonoid of $A^*$ with these properties.  

Recall that a subset $C$ of $A^+$ is called a \emph{code} if $C$ is a set of free generators for the submonoid $\Mgen{C} \leq A^+$.     A subset $C$ of $A^+$ is said to have the \emph{prefix property} if $CA^+ \cap C = \varnothing$.  Similarly, we say $C$ has the \emph{suffix property} if $C \cap A^+C = \varnothing$.   Any subset $C$ of $A^+$ with the prefix property is a code. We call these \emph{prefix codes}.  Similarly, if $C$ has the suffix property, then $C$ is a code called a \emph{suffix code}.  If $C$ is both a prefix and a suffix code, then we call $C$ a \emph{biprefix code}.

We now make some observations about the set $U(R)$.    
\begin{enumerate}
\item[(a)] Every word $w \in U(R)$ represents and invertible element of the monoid $M$.   
\item[(b)] $U(R)$ is a free monoid which is freely generated by a finite biprefix code $B(R)$. 
\item[(c)] There is an algorithm which takes any finite set $R$ of words as input and computes the biprefix code $B(R)$.  
\end{enumerate}
For proofs of these facts we refer the reader to \cite[Section~1]{Lallement1974}.  
The idea behind the proof of (a) is to observe that if $\gamma$ and $\delta$ are words that represent invertible elements of $M$ then so does their product, and also if $\alpha \beta$ and $\beta \gamma$ are words that represent invertible elements of $M$, then the words $\alpha$, $\beta$ and $\gamma$ all also represent invertible elements of $M$.     
The starting point for the algorithm is provided by the members of $R$, which, being equal to $1$ in $M$, certainly represent invertible elements.
Since $U(R)$ is obtained from $R$ by closing under taking products and under adding triples of words $\alpha, \beta$ and $\gamma$ to ensure condition ($\A$), this can be used to prove (a). 

Regarding (a), it is important to note that even for one relator monoids we are not claiming that every word which represents an invertible element of $M$ necessarily belongs to the set $U(R)$. Indeed if one considers the simple example of the bicyclic monoid $\Mpres{b,c}{bc=1}$ then clearly $U(R) = \{bc\}^*$ is the smallest submonoid of $\{b,c\}^*$ which contains $\{bc\}$ and has property ($\A$). But the word $bbcc$ is equal to $1$ in the bicyclic monoid, so certainly represents an invertible element, while $bbcc \not\in \{bc\}^* = U(R)$.

Even though we do not need it in this paper, for the sake of completeness we explain here an algorithm which computes the finite biprefix code $B(R)$ that generates the free monoid $U(R)$. 
This description follows \cite[p372]{Lallement1974} (with a small correction we have made to a typo in (i) on \cite[p372]{Lallement1974}).  

For $i \in \mathbb{N}$ we define the set $W_i$ inductively as follows. 
First set $W_0 = R = \{r_i \::\: i \in I \}$.  
Then for $i>0$ we define $W_i$ inductively in the following way.   
For $u \in A^*$ we have $u \in W_{i+1}$ if and only if one of the following conditions holds:
\begin{enumerate}
\item[(i)] $u \in W_i$ and $v \not\in \pref(u)$ and $v \not\in \suff(u)$ for all $v \in W_i \setminus \{ u \}$; 
\item[(ii)] there exist $v, v' \in X^*$ not both equal to $\epsilon$ such that $uv \in W_i$ and $v'u \in W_i$;
\item[(iii)] there exist $v, v' \in X^*$ with $v \neq \epsilon$ such that $uv \in W_i$ and $vv' \in W_i$; 
\item[(iv)] there exist $v,v' \in X$ with $v' \neq \epsilon$ such that $v'u \in W_i$ and $vv' \in W_i$. 
\end{enumerate} 
It may be shown that there is a number $n$ such that $W_n = W_{n'}$ for all $n' \geq n$ i.e. this process eventually stabilises. 
Then $B(R) = W_n$ is a biprefix code that freely generates $B(R)$; see \cite[Theorem~1(i)]{Lallement1974}.   
Roughly speaking, the idea behind the above algorithm is that steps (ii)-(iv) are designed to ensure that the set we construct does satisfy property ($\A$), while step (i) ensures that the set constructed has the prefix and suffix properties. 
Since $B(R)$ is a code which generates the free monoid $U(R)$ it follows that every word $w \in U(R)$ can be written uniquely as $w \equiv w_1 \ldots w_k$ where $w_i \in B(R)$ for $1 \leq i \leq k$. 
Since $R \subseteq U(R)$ it follows that in particular every relator word $r_i$ with $i \in I$ decomposes uniquely as a product of words from $B(R)$.  
We call this the decomposition of relators into invertible pieces determined by 
the Adjan algorithm.
The key result showing the importance of this algorithm is that for special one-relator monoids this algorithm actually computes the decomposition of the relator into minimal invertible pieces. 
We have already seen that the presentation determined by these pieces is then a presentation for the group of units of the monoid (see Theorem~\ref{thm:SpecialMakanin}).  
This gives the following result of Adjan \cite{adjan}. 

\begin{thm}
[Adjan {\cite[Chapter III, Theorem 7]{adjan}\label{thm:adjan:main}}]
For any one-relator special monoid $\Mpres{A}{r=1}$ the Adjan algorithm computes the decomposition of $r$ into minimal invertible pieces. 
More precisely, the biprefix code $B(\{r\})$ computed by the Adjan algorithm 
is equal to the set of minimal invertible pieces of the relator $r$.   
Hence there is an algorithm that takes any special one-relator monoid as input, and computes a one-relator presentation which is isomorphic to the group of units of the monoid. 
  \end{thm}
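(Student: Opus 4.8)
Since the statement is Adjan's, the plan is to deduce it from the three facts (a)--(c) about $U(\{r\})$ recorded above (see \cite{Lallement1974}), together with one structural input that is genuinely special to the one-relator case. Write $B=B(\{r\})$ for the biprefix code produced by the algorithm and $r\equiv r_1r_2\cdots r_k$ for the decomposition into minimal invertible pieces, and set $M_r=\{r_1,\dots,r_k\}$. First I would observe that $M_r$ is a biprefix code: if some piece were a proper nonempty prefix (or suffix) of another piece, then, both being units of $M$, that prefix would be a proper nonempty invertible prefix of a minimal piece, contradicting minimality. Hence $M_r^{\ast}$ is free on $M_r$, while $U(\{r\})=B^{\ast}$ is free on $B$ by fact (b). Since a free submonoid of $A^{\ast}$ has a unique base, the whole theorem reduces to the single assertion $U(\{r\})=M_r^{\ast}$ as submonoids of $A^{\ast}$; the two closing sentences then follow by feeding $B=M_r$ into Theorem~\ref{thm:MonPiecesGenerate} (which turns the pieces into a one-relator presentation of the group of units) and invoking fact (c) for computability.

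The inclusion $U(\{r\})\subseteq M_r^{\ast}$ I expect to be routine. The key preliminary is a characterisation of the invertible prefixes of $r$: a prefix of $r$ represents a unit of $M$ if and only if it equals $r_1\cdots r_i$ for some $0\le i\le k$. One direction is immediate since each $r_j$ is a unit; for the converse, if an invertible prefix ended strictly inside $r_{i+1}$, say $p\equiv(r_1\cdots r_i)s$ with $s$ a nonempty proper prefix of $r_{i+1}$, then $s=(r_1\cdots r_i)^{-1}p$ would be a unit, contradicting minimality of $r_{i+1}$. Now by fact (a) every partial product $b_1\cdots b_j$ of code words is a unit, and it is a prefix of $r$, hence of the form $r_1\cdots r_i$; thus every boundary between consecutive code words falls on a minimal-piece boundary. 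Consequently each $b_j$ is a product of consecutive minimal pieces, so $B\subseteq M_r^{\ast}$ and therefore $U(\{r\})=B^{\ast}\subseteq M_r^{\ast}$.

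The reverse inclusion $M_r^{\ast}\subseteq U(\{r\})$, i.e.\ that each minimal piece $r_i$ already lies in $U(\{r\})$, is the substantive content and the main obstacle. Equivalently, one must show that closing $\{r\}$ under products and under the overlap operations (ii)--(iv) splits $r$ all the way down to its minimal pieces, rather than stopping at some coarser factorisation. The engine here is the non-overlap property of minimal pieces: no nonempty word can be simultaneously a proper suffix of one minimal piece and a proper prefix of another, because the invertible-overlap principle behind fact (a) would then force that word to be a unit, violating minimality. This is exactly the configuration that the overlap steps (ii)--(iv) detect, and it is precisely this property that is special to one relator and fails in general (cf.\ Example~\ref{ex:AdjanFailsForNonOR}). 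Following Adjan \cite{adjan} and Lallement \cite{Lallement1974}, I would prove that every break between consecutive minimal pieces of $r$ is witnessed by a self-overlap of $r$ with itself, so that the corresponding shorter invertible factors are produced by the closure, and then induct (on the number of pieces, or on $|r|$) to conclude $r_i\in U(\{r\})$ for all $i$. With both inclusions in hand, $U(\{r\})=M_r^{\ast}$, uniqueness of the base gives $B=M_r$, and the theorem follows.
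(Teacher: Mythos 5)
First, a point of comparison: the paper does not prove this statement at all --- it is quoted, with attribution, from Adjan \cite[Chapter III, Theorem 7]{adjan} --- so your proposal has to stand on its own, and it does not quite. Your superstructure is the right one and is largely correct: the set $M_r=\{r_1,\dots,r_k\}$ of minimal invertible pieces is a biprefix code (your suffix argument via the unit $s=r_jr_i^{-1}$ is fine), free submonoids of $A^\ast$ have unique bases, so the theorem is equivalent to the single equality $U(\{r\})=M_r^\ast$, and the last sentence of the statement then follows from Theorem~\ref{thm:MonPiecesGenerate} together with computability of $B(R)$. But even your ``routine'' inclusion has a slip: the boundary argument applies only to the code words occurring in the canonical $B$-factorisation of $r$ itself (an arbitrary partial product $b_1\cdots b_j$ of code words is not a prefix of $r$), and at that stage nothing guarantees that every element of $B$ occurs in that factorisation --- this is true only a posteriori. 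The clean repair is to verify that $M_r^\ast$ is a submonoid of $A^\ast$ containing $r$ and satisfying property~($\mathcal{A}$): if $\alpha\beta,\beta\gamma\in M_r^\ast$ then $\alpha,\beta,\gamma$ all represent units of $\Mpres{A}{r=1}$, and a cut falling strictly inside a piece of the $M_r$-factorisation would exhibit a proper nonempty invertible prefix of a minimal piece; minimality of $U(\{r\})$ then yields $U(\{r\})\subseteq M_r^\ast$, hence $B\subseteq M_r^\ast$, in one stroke.

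The genuine gap is the reverse inclusion $M_r\subseteq U(\{r\})$, which is not a technical obstacle to be handled by the pattern you describe --- it \emph{is} Adjan's theorem, and your proposal contains no argument for it. The slogan ``every break between consecutive minimal pieces of $r$ is witnessed by a self-overlap of $r$ with itself'' is not literally true: a piece boundary need not appear in any direct overlap of $r$ with itself and may emerge only after several rounds of the closure applied to previously produced words, which is exactly why the algorithm iterates steps (i)--(iv) to a fixed point and why Lallement's description involves repeatedly passing to overlaps and cyclic conjugates. Once corrected to ``every boundary is eventually witnessed by the iterated closure,'' the assertion is a restatement of the theorem, not a proof of it. Moreover, the ``engine'' you propose is misidentified: the non-overlap property of minimal pieces (no nonempty word is simultaneously a proper suffix of one piece and a proper prefix of another) holds for \emph{every} special presentation, not just one-relator ones --- in Example~\ref{ex:AdjanFailsForNonOR} the minimal pieces are the individual letters, which satisfy your non-overlap property vacuously, yet the algorithm fails there --- so this property cannot be what makes the one-relator case work; it runs in the soundness direction (every cut the algorithm makes lands on a piece boundary, re-proving $U(\{r\})\subseteq M_r^\ast$) and gives no lower bound on which boundaries the closure actually finds. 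Adjan's completeness argument is a long divisibility analysis special to a single defining relation (cf.\ also \cite{Lallement1974,zhang:rewriting,zhang:short}), and your induction sketch does not say what the inductive step would do when a boundary is not yet witnessed. Deferring this step to \cite{adjan} is, to be fair, exactly what the paper does --- it states the theorem with a citation and no proof --- but as a proof your write-up is a correct reduction plus a citation at the decisive point.
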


For arbitrary special monoid presentations this result is no longer true, as the following example demonstrates. 

\begin{example}\label{ex:AdjanFailsForNonOR}
Consider the monoid presentation 
\[
\Mpres{a,b,c,d}{
ab=1, cabd=1, cdd=1
}
\]
There are no overlaps, but $d = 1$ is easily seen to be a consequence of the defining relations. In particular the letter $d$ represents an invertible element. From this it follows that all the letters here are invertible, but this is not something that the Adjan algorithm will discover. 
\end{example}  

In fact, it is proved in \cite{Narendran91} 
that the problem of whether a finitely presented special monoid is a group is undecidable. From this is follows that there is no algorithm which takes finite special monoid presentations as input and computes the minimal invertible pieces of the relator.   

It is natural to ask whether there is an algorithm which takes finite one-relator special inverse monoid $\Ipres{A}{r=1}$ as input and computes the minimal invertible pieces of the relator $r$. This is a question that was considered by Margolis, Meakin and Stephen in \cite{OHarePaper}. As already mentioned above, in that paper the following example is introduced 
\[
\OHare=\Ipres{a,b,c,d}{abcdacdadabbcdacd = 1}=\Ipres{a,b,c,d}{r = 1},
\]
called the O'Hare monoid. As explained above, one thing that makes this example significant is that the defining relator word clearly has no overlaps with itself and hence the Adjan overlap algorithm would terminate instantly, and return $r$ as a single invertible piece of itself. However, Margolis,  Meakin and Stephen \cite{OHarePaper}, using van Kampen diagrams and Stephen's procedure, succeed in showing that $r$ in fact has a finer decomposition, namely 
\[
r\equiv abcd\cdot 
acd\cdot
ad\cdot
abbcd\cdot
acd.
\]
They provide no further information about the group of units of the monoid $\OHare$.

Our aim now is to give a unit computing algorithm for special inverse monoid presentations which improves on the Adjan algorithm. Specifically our new algorithm, which we shall call the \emph{Benois algorithm}, will have the following properties
\begin{enumerate}
\item it correctly computes the minimal invertible pieces of the O'Hare example, and 
\item it always computes a refinement of the Adjan algorithm.
\end{enumerate}
These properties mean that every invertible piece that the Adjan algorithm discovers, the Benois algorithm also discovers, and there are examples where the Benois algorithm discovers strictly smaller pieces than the Adjan algorithm does.  

Let $M = \Ipres{A}{r_i=1\ (i\in I)}$ be a finitely presented special inverse monoid. Let 
\[
\Sigma = \bigcup_{i \in I}(\pref(r_i) \cup \pref(r_i^{-1}))
\subseteq \overline{A}^*.
\]
Let $V = \mathrm{red}(\Sigma^*) \subseteq F_A$.  For each $i \in I$ decompose
\begin{equation}
\label{eq:dia1}
r_i \equiv r_{i 1} r_{i 2} \ldots r_{i {k_i}} 
\end{equation}
such that for every proper prefix $p$ of $r_i$ we have 
\begin{equation}
\label{eq:dia2}
\mathrm{red}(p^{-1}) \in V 
\Leftrightarrow
p \equiv r_{i 1} r_{i 2} \ldots r_{i j} \ \mbox{for some} \
j \in \{ 1, \ldots, k_i \}.
\end{equation}
Intuitively, this decomposition is obtained by `reading' $r_i$ from left to right, and `marking' each prefix $p$ with the property that $\mathrm{red}(p^{-1})\in V$.

\begin{lem}\label{lem:benois:pieces}
  For each $i \in I$, the factorisation \eqref{eq:dia1} is a decomposition into invertible pieces.  
\end{lem}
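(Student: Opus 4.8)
The plan is to verify the two defining conditions for a \emph{set of invertible pieces}: first, that each factor $r_{ij}$ represents an invertible element of $M$; and second, that each relator $r_i$ lies in the submonoid of $\overline{A}^*$ generated by the collection of all such factors. The second point is immediate by construction, since \eqref{eq:dia1} literally expresses $r_i$ as a product of the $r_{ij}$. So the substance of the lemma is entirely in showing that each $r_{ij}$ is invertible in $M$.

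The key idea is to read off invertibility from the set $V=\red(\Sigma^*)$. First I would establish the meaning of the condition $\red(p^{-1})\in V$ in terms of the maximal group image $G=\Gpres{A}{r_i=1\ (i\in I)}$. Observe that every element of $\Sigma$ is a prefix of some $r_i$ or of some $r_i^{-1}$, and since each $r_i=1$ in $M$ (and hence in $G$), such prefixes have a controlled behaviour under the map $\psi:M\to G$ and $\pi_G:F_A\to G$ from the setup of Theorem~\ref{thm:Usandwiched}. The crucial observation is that a word $w\in\overline{A}^*$ with $\red(w)\in V$ is a product of prefixes of the relators and their inverses, and therefore represents an element of $M$ that is a product of right-invertible elements (each prefix of a relator is right invertible in a special inverse monoid, being a prefix of something equal to $1$). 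I would make precise that $\red(p^{-1})\in V$ signals that the suffix of $r_i$ following $p$ is forced to be invertible.

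Concretely, for a prefix factorisation point $p\equiv r_{i1}\ldots r_{ij}$, I would use Lemma~\ref{lem_good} to move between the free group and the monoid: since prefixes of $r_i$ are right invertible in $M$, and since $r_i=1$ forces the complementary suffix to be left invertible, the two-sided invertibility of each individual piece $r_{ij}$ follows by combining the marking condition \eqref{eq:dia2} at consecutive breakpoints $p_{j-1}\equiv r_{i1}\ldots r_{i,j-1}$ and $p_j\equiv r_{i1}\ldots r_{ij}$. The precise mechanism is that $\red(p_{j-1}^{-1})\in V$ and $\red(p_j^{-1})\in V$ together certify, via membership in $\red(\Sigma^*)$, that both $p_{j-1}$ and $p_j$ are invertible in $M$; invertibility of the single factor $r_{ij}=p_{j-1}^{-1}p_j$ then follows from closure of the group of units under products and inverses, using Lemma~\ref{lem_good}\ref{it:lg3} to pass freely between the reduced and unreduced forms.

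The main obstacle I anticipate is justifying rigorously that membership $\red(p^{-1})\in V=\red(\Sigma^*)$ actually \emph{implies} invertibility in $M$, rather than merely in the group $G$. The set $V$ is defined purely at the level of the free group, so the real work is the monoid-theoretic content: I must show that if $\red(p^{-1})$ can be written as a reduced product of prefixes of relators and inverse relators, then $p$ represents an invertible element of the inverse monoid $M$. This requires an argument that each such prefix is right invertible in $M$ (straightforward, as it is a prefix of a word equal to $1$), that its inverse-word $p^{-1}$ is correspondingly left invertible, and that the free-reduction bookkeeping underlying $V$ respects these one-sided invertibility properties when passed through $\pi_M$. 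I expect this to hinge on a careful application of parts \ref{it:lg1} and \ref{it:lg2} of Lemma~\ref{lem_good} to absorb the cancelling factors $xx^{-1}$ that arise in the free reduction, showing they do not disturb invertibility.
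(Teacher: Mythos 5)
Your proposal is correct and follows essentially the paper's own route: prefixes of the relators and of their inverses are right invertible in $M$, hence so is every word in $\Sigma^*$ and, via Lemma~\ref{lem_good}\ref{it:lg3}, every word in $V=\red(\Sigma^*)$; combining right invertibility of each breakpoint prefix $p$ with $\red(p^{-1})\in V$ (using the left-sided dual of Lemma~\ref{lem_good}\ref{it:lg3} to identify $p^{-1}$ with $\red(p^{-1})$ in $M$) gives two-sided invertibility of each prefix and hence of each piece. The ``obstacle'' you flag at the end is discharged in the paper precisely by this direct use of part~\ref{it:lg3} and its dual, with no further bookkeeping via parts~\ref{it:lg1} and~\ref{it:lg2} needed.
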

\begin{proof}
We need to prove that each $r_{ij}$ represents an invertible element of $M$.
This is equivalent to each prefix $p_{ij}\equiv r_{i1}r_{i2}\dots r_{ij}$ being invertible, and we proceed to prove this latter assertion.
Since every word in $\Sigma$ clearly represents a right invertible element of $M$ it follows that every word in $\Sigma^*$ represents a right invertible element of $M$. Applying Lemma~\ref{lem_good}(iii) it follows that every word in $V$ represents a right invertible element of $M$. 
Since $p_{ij}$ represents a right invertible element of $M$ it follows from Lemma~\ref{lem_good}(iii) that $p _{ij}= \red(p_{ij})$ in $M$. 
The word $p_{ij}^{-1}$ represents a left invertible element of $M$, and hence $p_{ij}^{-1} = \red(p_{ij}^{-1})$ also holds in $M$ 
by an obvious dual of Lemma~\ref{lem_good}(iii). 
By \eqref{eq:dia2} we have $\red(p_{ij}^{-1}) \in V$, implying that $p_{ij}^{-1}$ represents a right invertible element of $M$, and therefore $p_{ij}$ represents an invertible element of $M$, as required.
\end{proof}

Note that this lemma does not claim that \eqref{eq:dia1} is a decomposition into \emph{minimal} invertible pieces. In fact, it remains an open question whether this is the case. 

Next we turn our attention to the question of whether the above decomposition of the relations into invertible pieces is computable. It is a consequence of a theorem of Benois that the free group $F_A$ has decidable submonoid membership problem; see \cite{Lohrey15}.

\begin{lem}\label{lem:computing:benois}
There is an algorithm which takes any finite inverse monoid presentation  $\Ipres{A}{r_i = 1 \; (i \in I)}$ and any word $w \in \overline{A}^*$, and decides whether $\mathrm{red}(w) \in V$. 
  Therefore there is an algorithm which takes any finite inverse monoid presentation  $\Ipres{A}{r_i = 1 \; (i \in I)}$ and computes the decomposition into invertible pieces given by \eqref{eq:dia1} for all relators $r_i$ in the presentation.
\end{lem}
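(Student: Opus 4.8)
The plan is to establish the key decision procedure first, and then reduce the second assertion to it via the finiteness of the presentation and the combinatorics of prefixes. The first task is to decide, given $w \in \overline{A}^\ast$, whether $\red(w) \in V = \red(\Sigma^\ast)$. Since $V$ is by definition the set of elements of $F_A$ represented by the submonoid $\Sigma^\ast \leq \overline{A}^\ast$, membership $\red(w) \in V$ is precisely the statement that the group element $\red(w)$ lies in the submonoid of $F_A$ generated by the finite set $\Sigma$. The finiteness of $\Sigma$ is guaranteed because $I$ is finite and each $\pref(r_i) \cup \pref(r_i^{-1})$ is a finite set of prefixes of two fixed words. Thus the first assertion is exactly an instance of the \emph{submonoid membership problem} for the free group $F_A$, applied to the finitely generated submonoid $\gen{\Sigma}$ and the element $\red(w)$.

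The main step, then, is to invoke the theorem of Benois that the submonoid membership problem is decidable for free groups, as recorded in \cite{Lohrey15}. Concretely, I would recall that $\red(\Sigma^\ast)$ is a rational subset of $F_A$: the set $\Sigma^\ast$ is a rational (indeed recognisable) subset of $\overline{A}^\ast$, and applying the free reduction homomorphism $\red$ sends rational subsets to rational subsets, so $V$ is rational in $F_A$. Benois' theorem asserts that for a rational subset of a free group one can effectively compute a finite automaton (over $\overline{A}$, accepting only reduced words) recognising it, and hence membership of any given reduced word $\red(w)$ can be tested by running that automaton. This yields the algorithm of the first sentence of the lemma.

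With the membership oracle in hand, the second assertion follows by a straightforward finite search. Given the presentation, for each $i \in I$ I would read the relator $r_i$ from left to right, and for each of its finitely many proper prefixes $p$ compute $\red(p^{-1})$ and test whether $\red(p^{-1}) \in V$ using the algorithm just described. The prefixes $p$ that pass this test are exactly the partial products $r_{i1}r_{i2}\dots r_{ij}$ by the defining property \eqref{eq:dia2}, so marking them determines the cut points of the factorisation \eqref{eq:dia1} unambiguously. Since each $r_i$ has only finitely many prefixes and $I$ is finite, this is a terminating procedure, and Lemma~\ref{lem:benois:pieces} guarantees that the output is genuinely a decomposition into invertible pieces.

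I expect the only genuine obstacle to be the first step, namely confirming that Benois' theorem delivers an \emph{effective} decision procedure rather than merely an abstract closure property; everything else is bookkeeping. The point to verify carefully is that $\Sigma$ is finite and that the passage $\Sigma^\ast \mapsto \red(\Sigma^\ast)$ preserves rationality effectively, so that the automaton for $V$ can actually be built from the presentation. Once that is in place, the reduction of the second statement to finitely many membership tests over the prefixes of the $r_i$ is immediate, and no further machinery is needed.
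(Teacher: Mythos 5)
Your proof is correct and takes essentially the same approach as the paper: the paper likewise notes that $V$ is a finitely generated submonoid of the free group $F_A$, invokes Benois' theorem (via \cite{Lohrey15}) to decide whether $\red(w) \in V$, and then declares the second assertion immediate. Your elaborations --- the finiteness of $\Sigma$, the effective rationality of $\red(\Sigma^\ast)$, and the finite search over prefixes of each $r_i$ --- simply make explicit the bookkeeping the paper leaves implicit.
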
 

\begin{proof}
 By definition the set $V$ is a finitely generated submonoid of the free group $F_A$, and the word $\red(w)$ is an element of the free group $F_A$. Hence Benois' Theorem can be applied to decide whether or not $\red(w) \in V$.  The second assertion in the statement of the lemma is then immediate. 
\end{proof}

We shall call the algorithm for computing invertible pieces given in Lemma~\ref{lem:computing:benois} the \emph{Benois algorithm}. 

Our aim is to prove the following result which explains the relationship between the Benois algorithm and the Adjan algorithm.    

\begin{thm}\label{thm_findsmore}
Let $M = \Ipres{A}{r_i=1 (i \in I)}$. Then for all $i \in I$ the decomposition of $r_i$ into pieces computed by the Benois algorithm is a refinement of the decomposition computed by the Adjan algorithm. 
\end{thm}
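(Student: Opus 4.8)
The plan is to show that every piece computed by the Adjan algorithm is a (possibly trivial) concatenation of consecutive Benois pieces; equivalently, that every marked boundary point of the Benois decomposition of $r_i$ is also a boundary point of the Adjan decomposition. Concretely, suppose $r_i \equiv s_1 s_2 \dots s_{m}$ is the Adjan decomposition into biprefix-code pieces from $B(R)$, and let $p$ be a proper prefix of $r_i$ that ends at an Adjan boundary, i.e.\ $p \equiv s_1 \dots s_t$ for some $t$. I want to prove that $p$ is also a Benois boundary, which by \eqref{eq:dia2} means exactly that $\red(p^{-1}) \in V$. Since the Benois decomposition marks \emph{every} prefix $p$ with $\red(p^{-1}) \in V$, establishing this inclusion for each Adjan boundary shows that the set of Adjan boundaries is contained in the set of Benois boundaries, which is precisely the statement that Benois refines Adjan.

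First I would set up notation carefully, recalling that $U(R) = \Mgen{B(R)} \le A^*$ and that $\Sigma = \bigcup_{i}(\pref(r_i)\cup\pref(r_i^{-1}))$ with $V = \red(\Sigma^*)$. The key observation is that the Adjan construction lives in the free monoid $A^*$ (over positive letters only), whereas the Benois construction lives in the free group $F_A$ via $V$, so I must relate an $A^*$-level factorisation to group-level membership in $V$. The natural bridge is that each Adjan piece $s_j \in B(R) \subseteq U(R)$ represents an invertible element of $M$ — this is fact (a) in the excerpt — and, crucially, that the \emph{positive} prefix $p \equiv s_1 \dots s_t$ of $r_i$ is itself a prefix of a relator, hence $p \in \Sigma$, so $\red(p) \in V$. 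The heart of the argument will be to upgrade this to $\red(p^{-1}) \in V$; since $V$ is a submonoid of $F_A$ (not a subgroup), membership of $\red(p)$ does not automatically give membership of $\red(p^{-1})$, and this is where the real content lies.

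To handle $\red(p^{-1}) \in V$, I would use the fact that $p$ is an Adjan boundary to produce an explicit expression for $p^{-1}$ as a product of elements of $\Sigma$. The idea: since $r_i \equiv p \cdot q$ where $q \equiv s_{t+1}\dots s_m$ is the remaining suffix, and since $r_i = 1$ in $M$ gives $r_i^{-1} = 1$ as well, we have $r_i^{-1} \equiv q^{-1} p^{-1}$, and $r_i^{-1} \in \Sigma$ by definition of $\Sigma$. The remaining suffix $q$ is also a product of Adjan pieces, each of which lies in $B(R) \subseteq U(R)$, and invertible pieces can be inverted using their expressions as products of members of $\Sigma$ (every generator of $U(R)$ is obtained by closing $R$ under property $(\A)$, whose witnesses $\alpha,\beta,\gamma$ are themselves factors of words already shown to be invertible, hence ultimately factors of relators or their inverses). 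So I would argue by induction on the Adjan closure process that every element of $U(R)$, and in particular each $s_j$ together with its inverse, has its reduced inverse lying in $V$; combined with $\red(r_i^{-1}) = 1 \in V$ and the submonoid closure of $V$, this yields $\red(p^{-1}) = \red(q)\cdot \red(r_i^{-1}) \in V$ after the appropriate cancellation.

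The main obstacle, as flagged above, is precisely the passage from the positive-word combinatorics of the Adjan algorithm to group-level membership in the submonoid $V$: one must show that inverting an Adjan piece keeps one inside $V$, despite $V$ being only a submonoid. I expect the cleanest route is to prove the auxiliary claim that \emph{for every $w \in U(R)$ one has both $\red(w) \in V$ and $\red(w^{-1}) \in V$}, by induction mirroring the inductive construction of $U(R)$: the base case $R \subseteq \Sigma$ gives $\red(r_i), \red(r_i^{-1}) \in V$ directly, the product-closure step is immediate since $V$ is a submonoid, and the property-$(\A)$ step requires checking that if $\alpha\beta, \beta\gamma \in U(R)$ have reduced forms and reduced inverses in $V$, then so do $\alpha$, $\beta$, $\gamma$ individually — this last verification, using the left- and right-invertibility afforded by Lemma~\ref{lem_good} together with the fact that prefixes and suffixes of $\Sigma^*$-words reduce into $V$, is where I would concentrate the technical work. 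Once that claim is in hand, applying it to the Adjan boundary prefix $p$ and its complementary suffix $q$ finishes the proof.
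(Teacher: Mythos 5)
Your overall architecture is the same as the paper's: you correctly reduce the theorem to showing that every Adjan boundary prefix $p$ of $r_i$ satisfies the Benois criterion $\red(p^{-1})\in V$ of \eqref{eq:dia2}, and you propose to get there via an auxiliary closure claim proved by induction mirroring the construction of $U(R)$ (base case $R$, closure under products, closure under property $(\A)$). Your target claim is in fact true (in the paper it follows from $U(R)\subseteq\U$), and your final assembly is sound: writing $r_i\equiv pq$, one has $\red(p^{-1})=\red(q)\cdot\red(r_i^{-1})$ in $F_A$ with $r_i^{-1}\in\Sigma$, so $\red(p^{-1})\in V$ would indeed follow from $\red(q)\in V$ and the fact that $V$ is a submonoid of $F_A$.

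The genuine gap sits exactly where you defer the ``technical work'': the property-$(\A)$ step cannot be carried out with your inductive invariant, which lives at the level of \emph{reduced} words. Knowing $\red(\beta^{-1}\alpha^{-1})\in V$ tells you nothing about $\red(\beta^{-1})$, because $\beta^{-1}$ is a prefix of the \emph{unreduced} word $\beta^{-1}\alpha^{-1}$, and cancellation between $\beta^{-1}$ and $\alpha^{-1}$ destroys that positional information in the reduced form. Indeed, as a standalone implication your verification target is false: take $\alpha\equiv u$, $\beta\equiv u^{-1}$, $\gamma\equiv u$ for an arbitrary word $u$; then $\red(\alpha\beta)$, $\red((\alpha\beta)^{-1})$, $\red(\beta\gamma)$, $\red((\beta\gamma)^{-1})$ are all empty, hence in $V$, while $\red(\beta)\equiv u^{-1}$ is arbitrary. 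So the reduced-level hypotheses alone can never yield the conclusion, and the invariant must be strengthened before the induction can close. The paper's key idea, which your sketch is missing, is precisely this strengthening: one tracks membership of the unreduced words $w$ and $w^{-1}$ in $\Sclo$, the closure of $\Sigma^*$ under \emph{partial} free reduction (this is the set $\U$), proves that $\Sclo$ is prefix closed via a splitting lemma for partial reductions (Lemma \ref{lem:split} and Corollary \ref{lem:split:cor}: if $\gamma\rightarrow^*\delta_1\delta_2$ then $\gamma\equiv\gamma_1\gamma_2$ with $\gamma_j\rightarrow^*\delta_j$, whence Lemma \ref{lem:pref:closed}), and only then runs your induction to show $\U$ has property $(\A)$ (Lemma \ref{lem:key:lemma:BA}), so that $U(R)\subseteq\U$ by minimality and Lemma \ref{lem:UandBenois} finishes. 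The tools you name cannot substitute for this machinery: Lemma \ref{lem_good} gives invertibility \emph{in the monoid} $M$, a semantic property strictly weaker than the syntactic membership $\red(p^{-1})\in V$ that the Benois algorithm tests (the paper's bicyclic-monoid caveat makes exactly this distinction); and prefix-closedness of $\Sigma^*$ itself does not apply to the mixed words such as $\beta^{-1}\alpha^{-1}$ arising in the $(\A)$ step, which are at best partial reductions of $\Sigma^*$-words --- hence the indispensability of the splitting lemma.
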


We need a few more definitions and lemmas before we can prove Theorem~\ref{thm_findsmore}. 
For any two words $\alpha, \beta \in \overline{A}^*$ we write $\alpha \rightarrow \beta$ if $\beta$ can be obtained from $\alpha$ by a single application of a rewrite rule $xx^{-1} \rightarrow 1$ or $x^{-1}x \rightarrow 1$ (with $x \in A$). Let us write $\rightarrow^*$ for the reflexive transitive closure of $\rightarrow$ i.e. $\alpha \rightarrow^* \beta$ means that $\beta$ can be obtained from $\alpha$ by a finite (possibly empty) sequence of deletions $xx^{-1} \rightarrow 1$ or $x^{-1}x \rightarrow 1$. If $\alpha \rightarrow^* \beta$ we say that $\alpha$ can be \emph{partially freely reduced} to $\beta$ (partially since $\beta$ need not be a reduced word).           

Define 
\[
\Sclo = \{ \beta \in \overline{A}^* \::\:
\alpha \rightarrow^* \beta \;
\mbox{for some } 
\alpha \in \Sigma^* \}. 
\]
So $\Sclo$ is the closure of the set $\Sigma^*$ under partial free reduction $\rightarrow^*$.    

It follows from Lemma~\ref{lem_good} that every word in $\Sclo$ represents a right invertible element of $M$.

\begin{remark}
Although we do not need it here, it may be shown that $\Sclo$ is a regular language, i.e. there is a finite state automaton $\mathcal{A}$ such that the language $L(\mathcal{A})$ recognised by the automaton is $L(\mathcal{A}) = \Sclo$.       
The proof of this can be found in \cite{Lohrey15}. 
The idea is to use an automaton saturation procedure.  
We begin with a finite state automaton $\mathcal{B}$ with $L(\mathcal{B})=\Sigma^*$ and then add $\epsilon$ transitions wherever we can read $xx^{-1}$, and repeat this process until no more $\epsilon$ transitions need to be added. 
Note that given  reduced word $u$ we can check if $u \in V$ by testing whether $u \in L(\mathcal{A})$.    
\end{remark}

Define $\U \subseteq \Sclo$ in the following way 
\[
\U = \{ \beta \in \Sclo \::\:  \beta^{-1} \in \Sclo \}. 
\] 
Note that every word $u \in \U$ represents an invertible element of $M$.   

The important thing to note about the set $\U$ is the following: 
Let $p$ be a prefix of some relator $r_i$ with $i \in I$. Then by definition $p \in \Sigma \subseteq \Sclo$. If in addition $p \in \U$ then $p$ is an invertible prefix of a relator and we will prove below that this invertible prefix $p$ will be computed by the Benois algorithm. 
However it is very important to note that the converse need not be true since it is possible that 
$p \in \Sclo$, $p^{-1} \not\in \Sclo$ but $\red(p^{-1}) \in \Sclo$.    
 
\begin{lem}\label{lem:UandBenois}
Let $p$ be a prefix of the relator $r_i$ where $i \in I$. If $p \in \U$ then $p$ is an invertible prefix of $r_i$ computed by the Benois algorithm.          
\end{lem}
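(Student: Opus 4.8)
The goal is to show that if $p \in \U$ then $p$ is an invertible prefix of $r_i$ that the Benois algorithm detects. Recall that the Benois algorithm marks a prefix $p$ of $r_i$ precisely when $\red(p^{-1}) \in V$, where $V = \red(\Sigma^*)$. So the plan is twofold: first, verify that $p$ indeed represents an invertible element of $M$; and second — the substantive part — verify that the marking condition $\red(p^{-1}) \in V$ is triggered.

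\medskip

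For the invertibility, I would argue directly from the definition of $\U$. By hypothesis $p \in \U$, which means $p \in \Sclo$ and $p^{-1} \in \Sclo$. As noted in the excerpt just before the lemma, it follows from Lemma~\ref{lem_good} that every word in $\Sclo$ represents a right invertible element of $M$. Applying this to both $p$ and $p^{-1}$, the element $\pi_M(p)$ is right invertible and its inverse word $p^{-1}$ is also right invertible, which forces $\pi_M(p)$ to be (two-sided) invertible. This is exactly the observation recorded in the remark that ``every word $u \in \U$ represents an invertible element of $M$,'' so this half is essentially immediate.

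\medskip

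The heart of the matter is connecting membership in $\U$ to the Benois marking condition, namely showing $\red(p^{-1}) \in V$. The key relationship I would exploit is between $\Sclo$ and $V$: since $\Sclo$ is the closure of $\Sigma^*$ under partial free reduction $\rightarrow^*$, and $V = \red(\Sigma^*)$, any element of $\Sclo$ freely reduces to an element of $V$. Concretely, if $\beta \in \Sclo$ then $\alpha \rightarrow^* \beta$ for some $\alpha \in \Sigma^*$, whence $\red(\beta) \equiv \red(\alpha) \in \red(\Sigma^*) = V$, using that free reduction is confluent and that $\rightarrow^*$ only performs (partial) free reductions. Now from $p \in \U$ we have in particular $p^{-1} \in \Sclo$, so applying this observation to $\beta \equiv p^{-1}$ gives $\red(p^{-1}) \in V$, which is precisely the Benois marking condition \eqref{eq:dia2}. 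Hence $p$ is one of the marked prefixes $r_{i1} \cdots r_{ij}$, i.e.\ an invertible prefix of $r_i$ detected by the Benois algorithm.

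\medskip

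I expect the main (though modest) obstacle to be making the passage $\beta \in \Sclo \Rightarrow \red(\beta) \in V$ fully rigorous: one must be careful that $\rightarrow^*$ consists only of cancellations of the form $xx^{-1} \rightarrow 1$ or $x^{-1}x \rightarrow 1$, so that $\red(\alpha) \equiv \red(\beta)$ whenever $\alpha \rightarrow^* \beta$ — this is just the confluence of free reduction and the fact that partial reduction is a subsequence of full reduction, but it deserves an explicit sentence. The rest is bookkeeping: once $\red(p^{-1}) \in V$ is established, one simply invokes the definition of the decomposition \eqref{eq:dia1}, \eqref{eq:dia2} to conclude that the prefix $p$ is marked, and hence computed, by the Benois algorithm.
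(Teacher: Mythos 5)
Your proposal is correct and follows essentially the same route as the paper's proof: from $p \in \U$ one gets $p^{-1} \in \Sclo$, hence $\red(p^{-1}) \in V$, which is exactly the marking condition \eqref{eq:dia2}. If anything, your explicit confluence step ($\alpha \rightarrow^* p^{-1}$ with $\alpha \in \Sigma^*$ implies $\red(p^{-1}) \equiv \red(\alpha) \in \red(\Sigma^*) = V$) is slightly more careful than the paper's compressed chain $\red(p^{-1}) \in \Sclo = \red(\Sigma^*) = V$, whose middle equality is really the statement that every element of $\Sclo$ freely reduces into $V$.
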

\begin{proof}
Since $p \in \U$ it follows by definition of $\U$ that $p^{-1} \in \Sclo$. 
It follows that $\red(p^{-1}) \in \Sclo$ since by definition the set $\Sclo$ is closed under the operation of partial free reduction of words, 
and hence  
\[
\red(p^{-1}) \in \red(\Sclo) = \red(\Sigma^*) = V.
\]
It follows from the definition of the Benois algorithm that  $p$ is an invertible piece of $r_i$ computed by the Benois algorithm.   
\end{proof}
 
\begin{remark}
It is important to note that we are \emph{not} claiming that $\U$ contains \emph{all} the invertible prefixes of relators computed by the Benois algorithm. 
\end{remark}

We will now prove that $\U$ does contains \emph{all} prefixes of relators computed by the Adjan algorithm. Combining this with Lemma~\ref{lem:UandBenois} will prove Theorem~\ref{thm_findsmore}. 

\begin{lem}\label{lem:split}
Let $\gamma, \delta \in \overline{A}^*$ with $\gamma \rightarrow \delta$. Then for any decomposition $\delta \equiv \delta_1 \delta_2$ there exists a decomposition $\gamma \equiv \gamma_1 \gamma_2$ such that $\gamma_1 \rightarrow^* \delta_1$ and $\gamma_2 \rightarrow^* \delta_2$.       
\end{lem}
\begin{proof}
Since $\gamma \rightarrow \delta$ we can write $\gamma \equiv \gamma' xx^{-1} \gamma''$ where $\delta \equiv \gamma' \gamma''$ and $x \in \overline{A}$. 

Now we have $\delta_1 \delta_2 \equiv \delta \equiv \gamma' \gamma''$.     
Then $\gamma'$ is a prefix of $\delta_1$, or else $\gamma^{\prime\prime}$ is a suffix of $\delta_2$.
We consider the former case, and the latter is dealt with analogously.
Write $\delta_1 \equiv \gamma' \mu$. 
So $\gamma'\gamma'' \equiv \delta \equiv \delta_1 \delta_2 \equiv \gamma' \mu \delta_2$. Hence $\gamma'' \equiv \mu \delta_2$ and so $\gamma \equiv \gamma' xx^{-1} \mu \delta_2$. So if we set $\gamma_1 \equiv \gamma' xx^{-1} \mu$ and $\gamma_2 \equiv \delta_2$ then $\gamma \equiv \gamma_1 \gamma_2$ and $\gamma_1 \rightarrow^* \gamma'\mu \equiv \delta_1$ and $\gamma_2 \rightarrow^* \delta_2$, as required.
\end{proof}

\begin{cor}\label{lem:split:cor}
Let $\gamma, \delta \in \overline{A}^*$ with $\gamma \rightarrow^* \delta$. Then for any decomposition $\delta \equiv \delta_1 \delta_2$ there exists a decomposition $\gamma \equiv \gamma_1 \gamma_2$ such that $\gamma_1 \rightarrow^* \delta_1$ and $\gamma_2 \rightarrow^* \delta_2$.       
\end{cor}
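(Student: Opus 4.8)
The plan is to prove the corollary by a straightforward induction on the number of elementary reduction steps witnessing $\gamma \rightarrow^* \delta$, using Lemma~\ref{lem:split} as the single-step engine. Write the reduction as a chain $\gamma \equiv \sigma_0 \rightarrow \sigma_1 \rightarrow \cdots \rightarrow \sigma_n \equiv \delta$ of length $n$, and induct on $n$. For the base case $n = 0$ we have $\gamma \equiv \delta$, so given a decomposition $\delta \equiv \delta_1 \delta_2$ we simply take $\gamma_1 \equiv \delta_1$ and $\gamma_2 \equiv \delta_2$; then $\gamma_1 \rightarrow^* \delta_1$ and $\gamma_2 \rightarrow^* \delta_2$ hold vacuously via the empty reduction sequence.

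For the inductive step I would peel off the \emph{first} step and apply the induction hypothesis to the shorter tail. Given $\delta \equiv \delta_1 \delta_2$, apply the inductive hypothesis to $\sigma_1 \rightarrow^* \delta$, a chain of length $n-1$, to obtain a decomposition $\sigma_1 \equiv \tau_1 \tau_2$ with $\tau_1 \rightarrow^* \delta_1$ and $\tau_2 \rightarrow^* \delta_2$. Now apply Lemma~\ref{lem:split} to the single step $\gamma \rightarrow \sigma_1$ together with the decomposition $\sigma_1 \equiv \tau_1 \tau_2$; this yields a decomposition $\gamma \equiv \gamma_1 \gamma_2$ with $\gamma_1 \rightarrow^* \tau_1$ and $\gamma_2 \rightarrow^* \tau_2$. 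Chaining the reductions by transitivity of $\rightarrow^*$ gives $\gamma_1 \rightarrow^* \tau_1 \rightarrow^* \delta_1$ and $\gamma_2 \rightarrow^* \tau_2 \rightarrow^* \delta_2$, that is $\gamma_1 \rightarrow^* \delta_1$ and $\gamma_2 \rightarrow^* \delta_2$, as required.

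I do not expect any genuine obstacle here: the entire content is carried by Lemma~\ref{lem:split}, and the corollary is just its iteration along the reduction chain, with $\rightarrow^*$ being by definition the reflexive transitive closure of $\rightarrow$. The only point deserving a moment's care is the direction in which one peels off a step. One wants the single step to be the one closest to $\gamma$, so that Lemma~\ref{lem:split} is applied \emph{last}, lifting an already-known decomposition of $\sigma_1$ back to $\gamma$, while the induction hypothesis handles the tail $\sigma_1 \rightarrow^* \delta$. Peeling off the last step instead works equally well, applying Lemma~\ref{lem:split} first to split $\sigma_{n-1}$ and the induction hypothesis afterwards; either bookkeeping closes the argument, so the corollary follows.
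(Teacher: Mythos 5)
Your proof is correct and takes essentially the same route as the paper: both arguments simply iterate Lemma~\ref{lem:split} along the reduction chain $\gamma \equiv w_0 \rightarrow w_1 \rightarrow \cdots \rightarrow w_k \equiv \delta$, splitting each intermediate word compatibly and composing the resulting partial reductions. The paper phrases this as a simultaneous application of the lemma to every step rather than a formal induction (and its displayed chains write single-step arrows where the lemma only delivers $\rightarrow^*$), so your explicit induction with transitivity of $\rightarrow^*$ is, if anything, the slightly more careful write-up of the identical idea.
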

\begin{proof}
Since $\gamma \rightarrow^* \delta$ we can write 
\[
\gamma \equiv w_0 \rightarrow w_1 \rightarrow \ldots \rightarrow w_k \equiv \delta. 
\] 
We have $w_k \equiv \delta \equiv \delta_1 \delta_2$. Then applying  
Lemma~\ref{lem:split} 
to $w_i \rightarrow w_{i+1}$ for all $i$ it follows that each $w_i$ can be decomposed as $w_i \equiv w_i' w_i''$ and 
\[
w_0' \rightarrow w_1' \rightarrow \ldots \rightarrow w_{k-1}' \rightarrow \delta_1
\quad\text{and}\quad
w_0'' \rightarrow w_1'' \rightarrow \ldots \rightarrow w_{k-1}'' \rightarrow \delta_2.
\] 
Setting $\gamma_1 \equiv w_0'$ and $\gamma_2 \equiv w_0''$ then proves the result.   
\end{proof}

Note that the set $\Sigma^*$ is clearly prefix closed.  

\begin{lem}\label{lem:pref:closed}
The set $\Sclo$ is prefix closed i.e. for any words $\alpha, \beta \in \overline{A}^*$ if $\alpha \in \Sclo$ and $\beta$ is a prefix of $\alpha$ then $\beta \in \Sclo$.       
\end{lem}
\begin{proof}
Let $\beta$ be a prefix of $\alpha$.    
Write $\alpha \equiv \beta \beta'$.  
Since $\alpha \in \Sclo$ there exists a word $\gamma \in \Sigma^*$ such that $\gamma \rightarrow^* \alpha \equiv \beta \beta'$.     
It follows from Corollary~\ref{lem:split:cor} that there exists a decomposition $\gamma \equiv \gamma_1 \gamma_2$ with $\gamma_1 \rightarrow^* \beta$ and $\gamma_2 \rightarrow^* \beta'$.
Since $\gamma \in \Sigma^*$ and $\gamma_1$ is a prefix of $\gamma$ it follows that $\gamma_1 \in \Sigma^*$.         
Since $\gamma_1 \in \Sigma^*$ and $\gamma_1 \rightarrow^* \beta$ it follows that $\beta \in \Sclo$.  
\end{proof}

Note that $\Sigma^*$ is closed under products.  

\begin{lem}\label{lem:closed:for:products}
The set $\Sclo$ is closed under products 
i.e. if $\alpha \in \Sclo$ and $\beta \in \Sclo$ then $\alpha\beta \in \Sclo$.    
\end{lem}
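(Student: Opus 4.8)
The plan is to unwind the definition of $\Sclo$ and reduce the claim to the compatibility of partial free reduction with concatenation, together with the already-noted fact that $\Sigma^*$ is closed under products.

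First I would take the witnesses supplied by the definition of $\Sclo$: since $\alpha \in \Sclo$ there is some $\gamma \in \Sigma^*$ with $\gamma \rightarrow^* \alpha$, and since $\beta \in \Sclo$ there is some $\delta \in \Sigma^*$ with $\delta \rightarrow^* \beta$. As $\Sigma^*$ is closed under products (as noted immediately before the statement of the lemma), the concatenation $\gamma\delta$ again lies in $\Sigma^*$, so it is an admissible starting point for a partial free reduction.

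Next I would record the key structural observation: the one-step relation $\rightarrow$ is compatible with concatenation on both sides, since a deletion $xx^{-1} \rightarrow 1$ or $x^{-1}x \rightarrow 1$ performed inside a word is unaffected by prepending or appending further letters. Passing to the reflexive--transitive closure, $\rightarrow^*$ inherits the same compatibility. Hence $\gamma \rightarrow^* \alpha$ yields $\gamma\delta \rightarrow^* \alpha\delta$, and $\delta \rightarrow^* \beta$ yields $\alpha\delta \rightarrow^* \alpha\beta$; composing these two reduction sequences gives $\gamma\delta \rightarrow^* \alpha\beta$.

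Finally, since $\gamma\delta \in \Sigma^*$ and $\gamma\delta \rightarrow^* \alpha\beta$, the word $\alpha\beta$ is by definition a member of $\Sclo$, which is the desired conclusion. There is no substantial obstacle here; the only point requiring a moment's care is the compatibility of $\rightarrow^*$ with concatenation, and this is immediate from the purely local nature of the deletion rules. I would therefore expect this lemma to be a short companion to Lemma~\ref{lem:pref:closed}, using the analogous closure property of $\Sigma^*$ (here closure under products rather than under taking prefixes) as its starting point.
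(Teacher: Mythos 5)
Your proof is correct and follows essentially the same route as the paper's: take witnesses $\gamma,\delta\in\Sigma^*$ reducing to $\alpha$ and $\beta$, note $\gamma\delta\in\Sigma^*$, and reduce the concatenation to $\alpha\beta$. If anything, you are more careful than the paper, which leaves the compatibility of $\rightarrow^*$ with concatenation implicit (and writes $\rightarrow$ where $\rightarrow^*$ is meant); your explicit two-stage reduction $\gamma\delta \rightarrow^* \alpha\delta \rightarrow^* \alpha\beta$ fills in exactly that detail.
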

\begin{proof}
Let $\alpha_0, \beta_0 \in \Sigma^*$ such that 
$\alpha_0 \rightarrow^* \alpha$ and 
$\beta_0 \rightarrow^* \beta$. 
It follows that $\alpha_0 \beta_0 \in \Sigma^*$ with $\alpha_0\beta_0 \rightarrow \alpha\beta$ hence $\alpha\beta \in \Sclo$.         
\end{proof}

\begin{lem}\label{lem:relators:in:U}
We have $r_i \in \U$ for all $i \in I$.   
\end{lem}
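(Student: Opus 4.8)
The plan is to verify the two membership conditions that together define $\U$, directly from the construction of $\Sigma$: namely, that both $r_i$ and its inverse $r_i^{-1}$ lie in $\Sclo$.

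First I would note that $r_i$ is an improper prefix of itself, so $r_i\in\pref(r_i)\subseteq\Sigma$ by the very definition of $\Sigma$. Since every element of $\Sigma$ lies in $\Sigma^*$, and since any $\alpha\in\Sigma^*$ satisfies $\alpha\rightarrow^*\alpha$ via the empty sequence of deletions, we have $\Sigma^*\subseteq\Sclo$; hence $r_i\in\Sclo$.

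Next I would apply exactly the same argument to $r_i^{-1}$. It too is an improper prefix of itself, and $\Sigma$ was defined to contain $\pref(r_i^{-1})$ as well as $\pref(r_i)$; therefore $r_i^{-1}\in\pref(r_i^{-1})\subseteq\Sigma\subseteq\Sigma^*\subseteq\Sclo$.

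Combining these, we have $r_i\in\Sclo$ and $r_i^{-1}\in\Sclo$, which is precisely the condition for membership in $\U=\{\beta\in\Sclo:\beta^{-1}\in\Sclo\}$, so $r_i\in\U$. There is no genuine obstacle to overcome here: the statement is an immediate unwinding of the definitions, and the only point to take care over is recalling that $\Sigma$ was deliberately built so as to include the prefixes of each relator together with the prefixes of its inverse, which is exactly what makes both the forward and the inverse membership hold.
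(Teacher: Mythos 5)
Your proof is correct and matches the paper's own argument exactly: both note that $r_i \in \pref(r_i) \subseteq \Sigma$ and $r_i^{-1} \in \pref(r_i^{-1}) \subseteq \Sigma$, that $\Sigma \subseteq \Sigma^* \subseteq \Sclo$ (via the empty sequence of reductions), and then invoke the definition of $\U$. No gaps; your version is simply a slightly more explicit unwinding of the same definitions.
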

\begin{proof}
By definition of $\Sigma$ both $r_i \in \Sigma$ and $r_i^{-1} \in \Sigma$. Since $\Sigma$ is a subset of $\Sclo$ it the follows 
from the definition of $\U$ 
that $r_i \in \U$.         
\end{proof}

The following is immediate from the definition of $\U$.   

\begin{lem}\label{lem:closed:under:inverses}
The set $\U$ is closed under inverses   
i.e. if $\alpha \in \U$ then $\alpha^{-1} \in \U$.   
\end{lem}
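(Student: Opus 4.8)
The plan is to unwind the definition of $\U$ directly and exploit the fact that inversion is an involution on $\overline{A}^\ast$, so that the condition defining membership in $\U$ is manifestly symmetric under $\beta \mapsto \beta^{-1}$.

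Concretely, suppose $\alpha \in \U$. By the definition $\U = \{\beta \in \Sclo \::\: \beta^{-1} \in \Sclo\}$, this says precisely that both $\alpha \in \Sclo$ and $\alpha^{-1} \in \Sclo$. To conclude that $\alpha^{-1} \in \U$, I must verify the two defining conditions for $\alpha^{-1}$: first that $\alpha^{-1} \in \Sclo$, and second that $(\alpha^{-1})^{-1} \in \Sclo$. The first of these is exactly the second condition I already have in hand. For the second, I use that inversion of words over $\overline{A}$ is an involution, so that $(\alpha^{-1})^{-1} \equiv \alpha$, whence $(\alpha^{-1})^{-1} \in \Sclo$ is just the statement that $\alpha \in \Sclo$, which is the first condition I already have. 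Both requirements are therefore met, and $\alpha^{-1} \in \U$.

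There is essentially no obstacle here, which is why the statement is flagged as immediate: the entire content is that the two-sided condition ``$\beta$ and $\beta^{-1}$ both lie in $\Sclo$'' is visibly invariant under interchanging $\beta$ and $\beta^{-1}$. The only point requiring any verification at all is the involution identity $(\alpha^{-1})^{-1}\equiv\alpha$, and this follows directly from the definition of the inverse of a word given in the preliminaries. No properties of $\Sclo$ beyond its mere definition—in particular, neither its prefix-closure nor its closure under products—are needed.
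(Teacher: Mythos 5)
Your proof is correct and matches the paper exactly: the paper gives no separate argument, simply remarking that the lemma ``is immediate from the definition of $\U$,'' which is precisely your observation that the condition $\beta\in\Sclo$ and $\beta^{-1}\in\Sclo$ is symmetric under $\beta\mapsto\beta^{-1}$ since $(\alpha^{-1})^{-1}\equiv\alpha$. Your closing remark is also accurate: no further properties of $\Sclo$ are needed.
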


\begin{lem}\label{lem:Uclosed:under:products}
The set $\U$ is closed under products  
i.e. if $\alpha \in \U$ and $\beta \in \U$ then $\alpha\beta \in \U$.    
\end{lem}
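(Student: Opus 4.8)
The plan is to unpack the definition of $\U$ and verify its two defining clauses separately for the product $\alpha\beta$. Recall that $\U = \{\gamma \in \Sclo : \gamma^{-1} \in \Sclo\}$, so to conclude $\alpha\beta \in \U$ it suffices to establish both that $\alpha\beta \in \Sclo$ and that $(\alpha\beta)^{-1} \in \Sclo$.

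First I would handle membership of $\alpha\beta$ in $\Sclo$. Since $\alpha,\beta \in \U \subseteq \Sclo$ by the definition of $\U$, the closure of $\Sclo$ under products established in Lemma~\ref{lem:closed:for:products} immediately yields $\alpha\beta \in \Sclo$.

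For the second clause I would pass to inverses. Using the identity $(\alpha\beta)^{-1} \equiv \beta^{-1}\alpha^{-1}$ for words over $\overline{A}$, and noting that $\alpha,\beta \in \U$ forces $\alpha^{-1},\beta^{-1} \in \Sclo$, a second application of Lemma~\ref{lem:closed:for:products}---this time to the pair $\beta^{-1},\alpha^{-1}$---gives $\beta^{-1}\alpha^{-1} \equiv (\alpha\beta)^{-1} \in \Sclo$. Combining the two clauses shows $\alpha\beta \in \U$, as required.

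I do not anticipate a genuine obstacle here: the argument is a short bookkeeping exercise on the two defining conditions of $\U$, resting entirely on the already-established closure of $\Sclo$ under products. The only point requiring a little care is the order reversal when forming the inverse of a product, which dictates that the second invocation of Lemma~\ref{lem:closed:for:products} be applied to $\beta^{-1}$ and $\alpha^{-1}$ in that order.
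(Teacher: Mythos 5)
Your proof is correct and matches the paper's argument essentially step for step: both establish $\alpha\beta \in \Sclo$ and $(\alpha\beta)^{-1} \equiv \beta^{-1}\alpha^{-1} \in \Sclo$ via Lemma~\ref{lem:closed:for:products} and conclude by the definition of $\U$. The only cosmetic difference is that the paper cites Lemma~\ref{lem:closed:under:inverses} to get $\alpha^{-1},\beta^{-1}\in\Sclo$, while you extract the same fact directly from the definition of $\U$, which is equivalent.
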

\begin{proof}
Since $\U$ is closed under inverses by Lemma~\ref{lem:closed:under:inverses} it follows that $\alpha, \alpha^{-1}, \beta, \beta^{-1} \in \Sclo$.
But $\Sclo$ is closed under products by Lemma~\ref{lem:closed:for:products}, hence $\alpha\beta \in \Sclo$ and $(\alpha\beta)^{-1} \equiv \beta^{-1} \alpha^{-1} \in \Sclo$ hence $\alpha\beta \in \U$ by definition of $\U$.        
\end{proof}

Note that the set $\U$ will not in general be closed under prefixes.  

\begin{lem}\label{lem:closed:for:pfreductions}
The set $\U$ is closed under partial free reductions 
i.e.  
if $\alpha \in \U$ and $\alpha \rightarrow^* \beta$ then $\beta \in \U$.    
\end{lem}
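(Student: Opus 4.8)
The plan is to unwind the definitions and reduce everything to a single observation: partial free reduction is compatible with inversion of words. Recall that $\alpha \in \U$ means precisely $\alpha \in \Sclo$ and $\alpha^{-1} \in \Sclo$. Recall also that $\Sclo$ is closed under $\rightarrow^*$: this is immediate from the definition, since if $\xi \in \Sclo$, say $\gamma \rightarrow^* \xi$ with $\gamma \in \Sigma^*$, and $\xi \rightarrow^* \eta$, then transitivity of $\rightarrow^*$ gives $\gamma \rightarrow^* \eta$, whence $\eta \in \Sclo$. Thus, to conclude $\beta \in \U$, I need only establish the two memberships $\beta \in \Sclo$ and $\beta^{-1} \in \Sclo$.

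The first, $\beta \in \Sclo$, is immediate: since $\alpha \in \U \subseteq \Sclo$ and $\alpha \rightarrow^* \beta$, the closure of $\Sclo$ under $\rightarrow^*$ noted above yields $\beta \in \Sclo$ at once.

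The crux is obtaining $\beta^{-1} \in \Sclo$. For this I would first verify that a single reduction step survives inversion, namely that $\gamma \rightarrow \delta$ implies $\gamma^{-1} \rightarrow \delta^{-1}$. Writing $\gamma \equiv \gamma' x x^{-1} \gamma''$ and $\delta \equiv \gamma' \gamma''$ with $x \in \overline{A}$, one computes $\gamma^{-1} \equiv (\gamma'')^{-1} x x^{-1} (\gamma')^{-1}$, using that $(x x^{-1})^{-1} \equiv x x^{-1}$; deleting this cancelling pair then produces $(\gamma'')^{-1}(\gamma')^{-1} \equiv \delta^{-1}$, so indeed $\gamma^{-1} \rightarrow \delta^{-1}$. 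Iterating this along a reduction sequence $\alpha \equiv w_0 \rightarrow w_1 \rightarrow \cdots \rightarrow w_k \equiv \beta$ gives $\alpha^{-1} \rightarrow^* \beta^{-1}$. Since $\alpha \in \U$ we have $\alpha^{-1} \in \Sclo$, and applying closure of $\Sclo$ under $\rightarrow^*$ once more yields $\beta^{-1} \in \Sclo$.

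Combining the two memberships, $\beta \in \Sclo$ and $\beta^{-1} \in \Sclo$, so $\beta \in \U$ by the definition of $\U$. The only step that requires any argument beyond unwinding definitions is the compatibility of $\rightarrow$ with inversion, and even that is a short syntactic check; so I do not anticipate any genuine obstacle, and the whole proof should occupy only a few lines.
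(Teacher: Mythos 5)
Your proposal is correct and follows essentially the same route as the paper's proof: both reduce the claim to the two memberships $\beta \in \Sclo$ and $\beta^{-1} \in \Sclo$, with the latter obtained from $\alpha \rightarrow^* \beta \Rightarrow \alpha^{-1} \rightarrow^* \beta^{-1}$. The only difference is that you explicitly carry out the short syntactic check that a reduction step survives inversion (using $(xx^{-1})^{-1} \equiv xx^{-1}$), a detail the paper dismisses as immediate ``from the definition of $\rightarrow^*$''.
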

\begin{proof}
Since $\alpha \in \U$ it follows from the definition of $\U$ that 
$\alpha \in \Sclo$ and $\alpha^{-1} \in \Sclo$.      
Since $\alpha \rightarrow^* \beta$ it follows from the definition of $\rightarrow^*$ that $\alpha^{-1} \rightarrow^* \beta^{-1}$.    
Hence $\beta \in \Sclo$ and $\beta^{-1} \in \Sclo$.
Since $\beta \in \Sclo$ and $\beta^{-1} \in \Sclo$ it follows from the definition of $\U$ that $\beta \in \U$.   
\end{proof}

We are now in a position to prove the key lemma that we need to prove our main result relating the Benois algorithm and the Adjan algorithm. 

\begin{lem}\label{lem:key:lemma:BA} 
The set $\U$ is closed under the Adjan overlap algorithm 
i.e. for all words $\alpha, \beta, \gamma \in \overline{A}^*$, if $\alpha\beta \in \U$ and $\beta\gamma \in \U$ then 
$\{\alpha, \beta, \gamma\} \subseteq \U$. 
In other words, the submonoid $\U$ of $\overline{A}^*$ has property ($\A$).   
  \end{lem}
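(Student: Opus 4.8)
The plan is to prove property $(\A)$ entirely at the level of the language $\Sclo$, exploiting the three closure properties already established: $\Sclo$ is prefix-closed (Lemma~\ref{lem:pref:closed}), closed under products (Lemma~\ref{lem:closed:for:products}), and closed under partial free reduction $\rightarrow^*$ (immediate from its definition as the $\rightarrow^*$-closure of $\Sigma^*$). Since membership in $\U$ means that both a word and its inverse lie in $\Sclo$, the goal reduces to showing, for each of $\alpha,\beta,\gamma$, that the word and its inverse are in $\Sclo$.

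First I would unpack the hypotheses. From $\alpha\beta\in\U$ we get $\alpha\beta\in\Sclo$ and $(\alpha\beta)^{-1}\equiv\beta^{-1}\alpha^{-1}\in\Sclo$; from $\beta\gamma\in\U$ we get $\beta\gamma\in\Sclo$ and $\gamma^{-1}\beta^{-1}\in\Sclo$. Now observe that each of $\alpha$, $\beta$, $\beta^{-1}$, $\gamma^{-1}$ is a \emph{prefix} of one of these four words; so prefix-closure of $\Sclo$ yields $\alpha\in\Sclo$, $\beta\in\Sclo$, $\beta^{-1}\in\Sclo$ and $\gamma^{-1}\in\Sclo$. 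In particular $\beta\in\U$ immediately, since both $\beta$ and $\beta^{-1}$ lie in $\Sclo$.

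The crux is obtaining $\alpha^{-1}\in\Sclo$ and $\gamma\in\Sclo$, for which prefix-closure alone does not suffice, since $\Sclo$ is \emph{not} closed under inverses. The key idea is to simulate cancellation by combining products with partial free reduction. For $\alpha$: since $\beta\in\Sclo$ and $\beta^{-1}\alpha^{-1}\in\Sclo$, product-closure gives $\beta\beta^{-1}\alpha^{-1}\in\Sclo$, and this word partially freely reduces to $\alpha^{-1}$ by deleting the block $\beta\beta^{-1}$, whence $\alpha^{-1}\in\Sclo$ by closure under $\rightarrow^*$; together with $\alpha\in\Sclo$ this gives $\alpha\in\U$. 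Symmetrically for $\gamma$: from $\beta^{-1}\in\Sclo$ and $\beta\gamma\in\Sclo$, product-closure gives $\beta^{-1}\beta\gamma\in\Sclo$, which reduces to $\gamma$, so $\gamma\in\Sclo$; with $\gamma^{-1}\in\Sclo$ this gives $\gamma\in\U$. Hence $\{\alpha,\beta,\gamma\}\subseteq\U$.

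I expect the only genuinely non-formal step to be this multiply-then-reduce manoeuvre, which is exactly where the full strength of the hypothesis $\alpha\beta,\beta\gamma\in\U$ (rather than merely $\in\Sclo$) is used: it is the membership of the inverse words $\beta^{-1}\alpha^{-1}$ and $\beta^{-1}$ in $\Sclo$ that supplies the cancelling factors. Everything else reduces to routine bookkeeping with prefixes.
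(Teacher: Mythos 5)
Your proof is correct and takes essentially the same route as the paper's: both arguments rest on prefix-closure of $\Sclo$ to capture $\beta$ and $\beta^{-1}$, and on the same multiply-by-a-cancelling-factor-then-partially-reduce manoeuvre to recover $\alpha$ and $\gamma$. The only difference is cosmetic: the paper packages these steps through its closure lemmas for $\U$ (inverses, products, partial free reductions), obtaining $\alpha\in\U$ from $\alpha\beta\beta^{-1}\in\U$, whereas you inline the same facts at the level of $\Sclo$ and reach $\alpha$ via $\beta\beta^{-1}\alpha^{-1}\rightarrow^*\alpha^{-1}$ combined with prefix-closure.
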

\begin{proof} 
Since $\alpha\beta, \beta\gamma \in \U$ and $\U$ is closed under inverses by Lemma~\ref{lem:closed:under:inverses} it follows that
\[
\{\alpha\beta, \; \beta^{-1}\alpha^{-1}, 
\; \beta\gamma, \; \gamma^{-1}\beta^{-1} \}
\subseteq \U \subseteq \Sclo. 
\]  
Since $\Sclo$ is prefix closed 
by Lemma~\ref{lem:pref:closed},
and $\beta^{-1}\alpha^{-1}, \beta\gamma \in \Sclo$,  
it follows that $\beta^{-1}, \beta \in \Sclo$ which implies $\beta \in \U$ and $\beta^{-1}
 \in \U$.       
Since $\U$ is closed for products by Lemma~\ref{lem:Uclosed:under:products}, and   
$\beta^{-1} \in \U$ and $\beta\gamma \in \U$, it follows that $\beta^{-1}\beta\gamma \in \U$, and hence $\gamma \in \U$ since $\U$ is closed under partial free reductions by Lemma~\ref{lem:closed:for:pfreductions}.   
   
Also, $\alpha\beta \in \U$ and $\beta^{-1} \in \U$ implies $\alpha \beta \beta^{-1} \in \U$ by Lemma~\ref{lem:Uclosed:under:products}
which thus $\alpha \in \U$ 
by Lemma~\ref{lem:closed:for:pfreductions}.        

This completes the proof that $\{\alpha, \beta, \gamma\} \subseteq \U$. 
  \end{proof}

We now have all we need to prove our main result of this section. 

\begin{proof}[Proof of Theorem~\ref{thm_findsmore}]
Let $p$ be some prefix of some relator $r_i$, with $i \in I$. Write $r_i \equiv pq$.
Suppose $p$ is an invertible prefix of $r_i$ computed by the Adjan algorithm. 
By definition of the Adjan algorithm, this means that $p$ belongs to the submonoid $U(R)$ of $\overline{A}^*$ generated by the finite biprefix code $B(R)$ computed by the Adjan algorithm, where $R = \{r_i \::\: i \in I \}$.  
Since $r_i \in \U$ for all $i \in I$ by Lemma~\ref{lem:relators:in:U} 
and $\U$ is closed under the Adjan algorithm by Lemma~\ref{lem:key:lemma:BA},
it follows that $\U$ is a submonoid of $\overline{A}^*$ with $R \subseteq \U$ and $\U$ has property ($\A$).   
By definition $U(R)$ is the smallest submonoid of $\overline{A}^*$ such that $R \subseteq U(R)$ and $U(R)$ satisfies ($\A$).    
Hence it follows that $U(R) \subseteq \U$.   
Since $p$ belongs to $U(R)$ by assumption, it follows that $p$ also belongs to $\U$.     
But then by Lemma~\ref{lem:UandBenois}, since $p \in \U$ it follows that $p$ is an invertible prefix of a relator (specifically of the relator $r_i$) computed by the Benois algorithm.    
\end{proof}

\begin{defn}
We call a set of words $W \subseteq X^*$ \emph{overlap free} if for all $w_1, w_2 \in W$ we have 
\[
\mathrm{pref}(w_1)
\cap
\mathrm{suff}(w_2) = 
\begin{cases}
\{ \epsilon \} & \mbox{if $w_1 \neq w_2$} \\ 
\{ \epsilon, w_1 \} & \mbox{if $w_1 = w_2$}. 
\end{cases} 
\]
\end{defn} 

\begin{defn}
We call a set $W \subseteq \overline{A}^*$ \emph{$i$-overlap free} if $W \cup W^{-1}$ is overlap free. 
\end{defn} 

\begin{remark} 
It is not clear (and possibly not true in general) that the set of Benois pieces is $i$-overlap free. 
However, it is a consequence of the results above that if all the relators $r_i \; (i \in I)$ are reduced words then the Benois pieces are all $i$-overlap free.    
To see this, suppose that $p$ is a prefix of $r_i$ with $p \in V$ and 
$\red(p^{-1}) \in V$
(i.e. $p$ is computed by the Benois algorithm). 
Since $p$ is a reduced word, it follows that $p^{-1}$ is also reduced, so $p^{-1} \equiv \red(p^{-1}) \in V$. So we have both $p \in V$ and $p^{-1} \in V$ where $V = \red(\Sigma^*) \subseteq \Sclo$. 
Since $p \in \Sclo$ and $p^{-1} \in \Sclo$ it follows from the definition of $\U$ that $p \in \U$.  
It then follows by Lemma~\ref{lem:key:lemma:BA} that when all the relators $r_i$ are reduced words, all the pieces of relators computed by the Benois algorithm also belong to the set $\U$. But then since $\U$ is closed under taking inverses by Lemma~\ref{lem:closed:under:inverses} it follows from Lemma~\ref{lem:key:lemma:BA} that the set pieces of the relators computed by the Benois algorithm is $i$-overlap free. 
  \end{remark}

\begin{example}\label{ex:OHareBenois}
We now illustrate the Benois algorithm by applying it to the O'Hare monoid  
\[
\OHare=\Ipres{a,b,c,d}{abcdacdadabbcdacd = 1}=\Ipres{a,b,c,d}{r = 1}.
\]
We claim that when applied to this example the Benois algorithm will compute the following decomposition of $r$ into invertible pieces
\[
r\equiv abcd\cdot 
acd\cdot
ad\cdot
abbcd\cdot
acd.
\]
For notational convenience we set 
\[
\alpha \equiv abcd, \;
\beta \equiv acd, \; 
\gamma \equiv ad, \; 
\delta \equiv abbcd. 
\]
Using the same notation as above, we have 
\[
\Sigma = \mathrm{pref}(r) \cup \mathrm{suff}(r)^{-1} = 
\{ a, ab, abcd, abcda, \ldots, r \} \cup \{ d^{-1}, (cd)^{-1}, (acd)^{-1}, \ldots, r^{-1} \},  
\]
and $V = \mathrm{red}(\Sigma^*)$ which is a subset of $F_A$.  Since $r$ is a positive word, it follows that for every prefix $p$ of $r$, both $p$ and $p^{-1}$ are reduced words.  It follows that the Benois algorithm will compute the set of all prefixes $p$ of $r$ such that $p^{-1} \in V$.         

Let us now list some of the prefixes $p$ that this algorithm will compute.    
It is a straightforward calculation to verify that  
\[
\red( \;
\beta(\alpha \beta \gamma \delta \beta)^{-1} (\alpha \beta \gamma) 
\; )
\equiv \delta^{-1}
\]
and 
\[
\red(\beta^{-1} \alpha \delta^{-1}) \equiv \alpha^{-1}. 
\]

Then we have:
\begin{align*}
\alpha^{-1} &\equiv 
\red(\beta^{-1} \alpha \delta^{-1}  ) 
\equiv 
\red( \; 
\beta^{-1} \cdot 
\alpha 
\beta \cdot
(\alpha\beta\gamma\delta\beta)^{-1} \cdot
(\alpha\beta\gamma)
\; ) \in V \\  
(\alpha \beta)^{-1} &\equiv 
\beta^{-1} \alpha^{-1} \equiv
\red(\;
\beta^{-1} 
\beta^{-1} 
\alpha 
\delta^{-1} \;)   
\equiv
\red(\;
\beta^{-1} \cdot  
\beta^{-1} \cdot 
\alpha 
\beta \cdot
(\alpha\beta\gamma\delta\beta)^{-1} \cdot
(\alpha\beta\gamma)
\;) \in V  \\
(\alpha \beta \gamma)^{-1} &\equiv   
\red(
(\delta \beta)
(\alpha \beta \gamma \delta \beta)^{-1}
)
\equiv 
\red(
(\alpha \beta^{-1} \alpha) \beta 
(\alpha \beta \gamma \delta \beta)^{-1}
) \\
&\equiv 
\red(
\alpha \cdot \beta^{-1} \cdot \alpha \beta 
\cdot (\alpha \beta \gamma \delta \beta)^{-1}
) \in V \\
 (\alpha \beta \gamma \delta)^{-1} &=
 \red(\beta (\alpha \beta \gamma \delta \beta)^{-1}) 
=
\red(
\alpha \delta^{-1} \alpha (\alpha \beta \gamma \delta \beta)^{-1} ) \\ 
&=
\red(
(\alpha \beta) \cdot (\delta \beta)^{-1} \cdot  \alpha \cdot   
(\alpha \beta \gamma \delta \beta)^{-1} ) 
\in V.  
\end{align*}
This proves that the Benois algorithm computes a decomposition which is a refinement of the above decomposition. 
To show that it is exactly this decomposition that is computed it may be shown, by appealing to bicyclic monoid homomorphic images of this monoid, that none of the pieces of this decomposition has a proper non-empty invertible prefix.
Indeed, if there were a finer decomposition into minimal invertible pieces, then one quickly sees that in all possible cases this would imply that all the generators are invertible, and so the monoid $\OHare$ would need to be a group.  
However, $\OHare$ is not a group since the map from $\{a,b,c,d\}^*$ onto $\{x,y\}^*$ defined by $a \mapsto a$, $b \mapsto 1$, $c \mapsto 1$ and $d \mapsto d$ induces a surjective a homomorphism from $\OHare$ onto the bicyclic monoid $\Ipres{a,d}{ad=1}$.  

We leave the details of this as an exercise for the reader. 
\end{example}

\section{Examples}
\label{sec:Examples}

This section will contain some examples of one-relator groups, and one-relator inverse monoids, to which the results of the previous sections can be applied. 
We have already seen some applications in Section~\ref{sec:Makanin}, e.g. to the case where the pieces $p_i$ are powers of generators from $A$ (see \ref{it:F1} in Section~\ref{sec:Makanin}).   

We are motivated by the question of whether the group of units of a one-relator inverse monoid is a one-relator group. 
Recall that at the start of Section~\ref{sec:Makanin} we gave the example of the one-relator inverse monoid    
\[
M=\Ipres{a,b,c}{a (bc^2b^{-1}) a (bc^3b^{-1}) a (bc^2b^{-1}) a=1}.
\]
with minimal invertible pieces $a$, $bc^2b^{-1}$, and $bc^{3}b^{-1}$. 
We observed there that \[\Gpres{x,y,z}{xyxzxyx=1}=F_{x,y}\] is not a presentation for the group of units of the monoid $M$.  
A key feature of that example was that $\{a,bc^2b^{-1},bc^3b^{-1}\}$ is not a basis for $\Ggen{a,bc^2b^{-1},bc^3b^{-1}}\leq F_{a,b,c}$. 
On the other hand, $\{a, bcb^{-1} \}$ is a basis for $\Ggen{a,bc^2b^{-1},bc^3b^{-1}}\leq F_{a,b,c}$ 
and this basis satisfies the conditions of Theorem~\ref{thm:posets} (setting $\mu(a)=a$ and $\mu(bcb^{-1})=c$). Using this fact, we can express each of the original pieces $\{a,bc^2b^{-1},bc^3b^{-1}\}$ in terms of the basis $\{a, bcb^{-1} \}$, which rewrites the relator word  
\[
a (bc^2b^{-1}) a (bc^3b^{-1}) a (bc^2b^{-1}) a
\]
as
\[
a (bcb^{-1})^2 a (bcb^{-1})^3 a (bcb^{-1})^2 a
\]
and then by Corollary \ref{cor:ourMakanin} we can conclude that the group of units of $M$ is the one relator group with presentation 
\[
\Gpres{x,t}{xt^2xt^3xt^2x=1}. 
\]
It is natural to ask whether this approach might be used for other one-relator inverse monoids. 
As explained in the introduction, one of the key motivating examples for the work is the O'Hare monoid. We shall now show how the group of units of the O'Hare monoid can indeed be computed using a similar approach to the example above.

\subsection{The group of units of the O'Hare monoid}
\label{subsec:Ohare}
Recall from the introduction that the O'Hare monoid $\OHare$ is the inverse monoid defined by the following inverse monoid presentation  
\[
\Ipres{a,b,c,d}{abcdacdadabbcdacd = 1}=\Ipres{a,b,c,d}{r = 1}. 
\]
As explained in the introduction, this was one of the main motivating examples which prompted the research presented in this paper.  
As already mentioned above, 
Margolis, Meakin and Stephen \cite{OHarePaper}  proved that the decomposition into minimal invertible pieces of this defining relator is  
\[
r\equiv abcd\cdot 
acd\cdot
ad\cdot
abbcd\cdot
acd.
\]
They prove this using van Kampen diagrams and Stephen's procedure. We gave an alternative proof of this fact in Example~\ref{ex:OHareBenois} using the Benois algorithm (see Lemma~\ref{lem:computing:benois}). As in Example~\ref{ex:OHareBenois} for notational convenience we set 
\[
\alpha \equiv abcd, \;
\beta \equiv acd, \; 
\gamma \equiv ad, \; 
\delta \equiv abbcd. 
\]
We shall now show how the results from Section~\ref{sec:Makanin} can be applied to prove that the group of units $U(\OHare)$ of the O'Hare monoid $\OHare$ is the free group of rank $2$.

This is interesting for two reasons.  
Firstly, it shows that the groups of units of the O'Hare monoid $\OHare$ is a one-relator group (since every free group is vacuously a one-relator group). 
Secondly while the group of units of $\OHare$ is a one-relator group, the group of units is \emph{not} isomorphic to the group 
\[
H = \Gpres{\alpha, \beta, \gamma, \delta}{
\alpha \beta \gamma \delta \beta = 1},  
\]  
which is the presentation obtained by replacing each minimal invertible piece by a letter in the obvious way. 
Indeed, in the presentation for $H$ we can remove the redundant generator $\delta$, and so $H$ is the free group on 
$\{\alpha,\beta,\gamma\}$.

This gives another example (in addition to the example discussed at the start of Section~\ref{sec:Makanin}) showing that Makanin's Theorem for special monoids (Theorem~\ref{thm:SpecialMakanin}) does not generalise directly to special inverse monoids. 
We will construct further examples later in this paper which show just how dramatically Makanin's Theorem fails to generalise to special inverse monoids. 

So we are left with the task of proving that the group of units $U(\OHare)$ of $\OHare$ is a free group of rank two.   

The first step will be to write down an alternative one-relator presentation for $\OHare$ which has minimal invertible pieces that satisfy condition \ref{it:F3} from Section~\ref{sec:Makanin}.   

Since $\alpha$, $\beta$, $\gamma$ and $\delta$ all represent invertible elements of the monoid $\OHare$ it follows that  
\[
\delta \alpha^{-1} = abbcd d^{-1} c^{-1} b^{-1} a^{-1},  
\]
and
\[
\beta \gamma^{-1} = acd d^{-1} a^{-1} 
\]
are both invertible in $\OHare$. Hence by Lemma~\ref{lem_good} it follows that 
these words are equal in $\OHare$ to the words obtained by freely reducing them, namely the words $aba^{-1}$ and $aca^{-1}$. This shows that  
\[
aba^{-1}, \ aca^{-1} \ \mbox{and} \ ad
\] 
are all invertible in $\OHare$.  
It follows that any product of these words is also invertible in $\OHare$. In particular the word  
\[
u \equiv 
(aba^{-1}) 
(aca^{-1})
(ad)
(aca^{-1})
(ad)
(ad)
(aba^{-1})
(aba^{-1})
(aca^{-1})
(ad)
(aca^{-1})
(ad)
\]
is invertible in $\OHare$. Since this word is invertible it follows from Lemma~\ref{lem_good} that it is equal in $\OHare$ to the word obtained by freely reducing this word.    
But it is easy to see that the free reduction of this word gives the defining relator $r$ in the original presentation for $\OHare$.    
Hence $u = r = 1$ holds in the monoid $\OHare$.  

Conversely, let $M$ be the inverse monoid defined by the presentation 
\[ 
\Ipres{a,b,c,d}{
u=1} 
\]
where $u$ is the word above.  
Since $u$ is invertible in $M$ it follows that $u = \mathrm{red}(u)$ in $M$. Hence $u=r$ in $M$. 

Combining these observations we have proved 
\[
\Ipres{a,b,c,d}{r=1} = 
\Ipres{a,b,c,d}{r=1, u=1} = 
\Ipres{a,b,c,d}{u=1}.   
\]   
So the O'Hare monoid is defined by the presentation 
\[
\Ipres{a,b,c,d}{
(aba^{-1}) 
(aca^{-1})
(ad)
(aca^{-1})
(ad)
(ad)
(aba^{-1})
(aba^{-1})
(aca^{-1})
(ad)
(aca^{-1})
(ad)
=1
}.
\] 
Next we claim that working with this new presentation for $\OHare$ the minimal invertible pieces of the defining relator $u$ are $aba^{-1}$, $aca^{-1}$ and $ad$. 
We already proved above that each of these words represents an invertible elements of $\OHare$, so these are certainly invertible pieces of the defining relator $u$.    
For minimality, if any of these pieces was not a minimal invertible piece then in all cases it would follow that $a$ is invertible. But $a$ is not invertible in $\OHare$ since   
$ad$ is a minimal invertible piece of $r$.

We claim that the decomposition of $u$ into minimal invertible pieces in  the presentation $\Ipres{a,b,c,d}{u=1}$  
satisfies condition \ref{it:F3} from Section~\ref{sec:Makanin}. 
For this we just need to observe that each piece contains a letter which appears in that piece exactly once, and does not appear in any of the other pieces. Here we can take $b$ in the piece $aba^{-1}$, $c$ in the piece $aca^{-1}$, and the letter $d$ in the piece $ad$.       

Since condition \ref{it:F3} holds it follows that the hypotheses of Corollary~\ref{cor:ourMakanin} are satisfied and hence applying this corollary we conclude that the group of units $U(\OHare)$ of $\OHare$ is isomorphic to 
\[
\Gpres{x,y,z}{xyzyzzxxyzyz=1}.
\]

Comparing this with the example at the start of Section~\ref{sec:Makanin} what we have proved here is that the situation with the O'Hare monoid is similar. 
Indeed, the original set of pieces $\{abcd, acd, ad, abbcd \}$ is not a basis for $\Ggen{abcd, acd, ad, abbcd } \leq F_{a,b,c,d}$. 
On the other hand, $\{aba^{-1}, aca^{-1}, ad \}$ is a basis for    
$\Ggen{abcd, acd, ad, abbcd } \leq F_{a,b,c,d}$ 
and this basis satisfies the conditions of Theorem~\ref{thm:posets} (setting $\mu(aba^{-1})=b$, $\mu(aca^{-1})=c$ and $\mu(ad)=d$).
If we rewrite the pieces in the original O'Hare presentation in terms of this basis and then apply Corollary~\ref{cor:ourMakanin} we conclude (via the calculations above) that the group of units of $\OHare$ is isomorphic to the one-relator group $\Gpres{x,y,z}{xyzyzzxxyzyz=1}$.

Now we have
\begin{eqnarray*}
\Gpres{x,y,z}{xyzyzzxxyzyz=1} & = & \Gpres{x,y,z,t}{xttzxxtt=1, t=yz} \\
& = & \Gpres{x,y,z,t}{xttzxxtt=1, y=tz^{-1}} \\
& = & \Gpres{x,z,t}{xttzxxtt=1} \cong \mathrm{FG}(x,t).  
\end{eqnarray*}
The last isomorphism follows upon removing the redundant generator $z$ in the previous presentation.
This completes the proof that the group of units of the O'Hare monoid is a free group of rank $2$.

Both the O'Hare monoid and the example at the start of Section~\ref{sec:Makanin} are examples of one-relator inverse monoids 
\[
\Ipres{A}{r=1}
\]  
which show that the obvious generalisation of Makanin's theorem to special one-relator monoids does not hold. 
However, for both these examples it was possible to resolve the issue by finding a suitable basis for the subgroup of $F_A$ generated by the pieces of $r$ and then rewriting each of the pieces, and the relator $r$, in terms of this new basis. 
In both cases this gave us a one-relator presentation for the group of units of the inverse monoid in question. 
Specifically, these examples do not resolve the question of whether the group of units of a special one-relator inverse monoid is a one-relator group.    

Given these examples it is natural to ask whether in fact the key property that we need for a set of pieces to be free for substitutions is that they are a basis for the subgroup of $F_A$ that they generate (like in these two examples).    
The following example which was originally due to Higman shows that this is not the case. 
Specifically, it shows that the conditions in the theorems in Section~\ref{sec:Makanin} cannot be weakened to just insisting that the set of pieces is Nielsen reduced.

\subsection{The G. Higman example}\label{subsection_Higman}

This example appears in the following paper of Steve Pride 
\cite{Pride76}
where he attributes the example to Graham Higman. As explained there, if we let 
\[
B = \Gpres{a,b}{b^{-1} a^2 b = a^3}
\] 
then the subgroup $\Ggen{a^4,b} \leq B$ is not free and every presentation of the group $\Ggen{a^4,b}$ with respect to the generating set $\{a^4, b\}$ requires at least two relators. Note the group $B$ is the well-known Baumslag-Solitar group $BS(2,3)$.   

This example arose as part of an investigation by Pride of conditions under which subgroups of one-relator groups are again one-relator. 
One interesting thing in this example is that the two-generated subgroup 
$\Ggen{a^4,b}$ 
of the one-relator group $B$ does not admit a one-relator presentation with respect to the generators $\{a^4, b\}$ (as proved by Higman). 
On the other hand, the subgroup $\Ggen{a^4,b}$ is actually a one-relator group. 
Indeed it may be shown that $\Ggen{a^4,b} = B$ that is, $\{a^4,b\}$ is a generating set for $B$. 
To see this it suffices to observe that in $B$ we have 
\[
a^2 =
[a^6] (a^4)^{-1} = 
[(b^{-1} a^2 b)^2] (a^4)^{-1}=
[b^{-1} a^4 b](a^4)^{-1} \in  \Ggen{a^4,b}. 
\]
But now since $a^2$ and $b$ both belong to $\Ggen{a^4,b}$ it follows that $a^3 = b^{-1} a^2 b \in \Ggen{a^4,b}$, and finally since $a^3$ and $a^2$ both belong to $\Ggen{a^4,b}$ it follows that $a = (a^3)(a^2)^{-1}$ also belongs to $\Ggen{a^4,b}$. Hence  $\Ggen{a^4,b} = \Ggen{a,b} = B$. 
So this is a concrete example showing that a $2$-generated one-relator group can admit a one-relator presentation with respect to a generating set of size two, but not admit any one-relator presentation with respect to another generating set of size two. Among other things this shows how sensitive the property of being one-relator is to the choice of finite generating set for the group.  

The following result shows how this example can be adapted to give an example of a Nielsen reduced set which is not free for substitutions into one-relator groups. 
We shall not need the definition of Nielsen reduced here. 
The definition can be found in \cite[page~6]{lyndonschupp}.  
The thing that is relevant for us here is the result 
(see \cite[Proposition~2.6]{lyndonschupp}) that if 
$X \subseteq F_A$ is Nielsen reduced then $\Ggen{X} \leq F_A$ is free with basis $X$.

\begin{prop}\label{thm_Higman}
Let $X = \{ ab^{-1}a^2, b, a^{4} \}$, which is Nielsen reduced a subset of the free group $F_{a,b}$. Let  
$\phi : F_{x,y,z} \rightarrow F_{a,b}$ be the unique homomorphism extending
\[
x \mapsto ab^{-1}a^2, \quad
y \mapsto b, \quad
z \mapsto a^{4}. 
\]
Set 
\[
K = \Gpres{x,y,z}{xyz^{-1}=1}
\]
and
\[
G =  
\Gpres{a,b}{(ab^{-1}a^2)(b)(a^{-4})=1}.
\]
Let $\hat{\phi}: K \rightarrow G$ be the homomorphism induced by $\phi$. Then $\hat{\phi}$ is not injective, and hence is not an isomorphism between $K$ and the subgroup $H$ of $G$ generated by the set $X = \{ ab^{-1}a^2, b, a^{4} \}$.
That is, the Nielsen reduced set $X = \{ ab^{-1}a^2, b, a^{4} \}$ is not free for substitution into the one-relator group
$\Gpres{x,y,z}{xyz^{-1}=1}$ 
via
$x \mapsto ab^{-1}a^2, y \mapsto b$ and $z \mapsto a^{4}$. 
\end{prop}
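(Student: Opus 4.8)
The plan is to recognise $G$ as the Baumslag--Solitar group from the preceding Higman discussion, to identify the image subgroup $H$ with the Higman subgroup $\Ggen{a^4,b}$, and then to extract a contradiction from the fact that $K$ is free of rank two while $H$ is not free. First I would simplify both presentations. In $K=\Gpres{x,y,z}{xyz^{-1}=1}$ the single relator lets me eliminate $z$ via $z=xy$, so $K\cong F_{x,y}$ is free of rank $2$. In $G$ the relator $(ab^{-1}a^2)(b)(a^{-4})$ reduces to $ab^{-1}a^2ba^{-4}$, and cancelling the leading $a$ and trailing $a^4$ shows the relation is equivalent to $b^{-1}a^2b=a^3$; hence $G=B=BS(2,3)$, exactly the group considered just above. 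This same computation confirms that $\hat{\phi}$ is well defined, since $\phi(xyz^{-1})$ is precisely the defining relator of $G$.

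Next I would identify the image. By definition $H=\im\hat{\phi}=\Ggen{ab^{-1}a^2,\,b,\,a^4}$. The defining relation of $G$ gives $ab^{-1}a^2=a^4b^{-1}$, so the generator $ab^{-1}a^2$ already lies in $\Ggen{a^4,b}$; together with the obvious inclusion $\Ggen{a^4,b}\subseteq H$ this yields $H=\Ggen{a^4,b}$. The Higman discussion has already established that $\Ggen{a^4,b}=B=G$, and, crucially for us, that this subgroup is \emph{not} free.

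Finally I would conclude. The homomorphism $\hat{\phi}\colon K\rightarrow G$ is surjective onto $H$, and $K\cong F_2$ is free; were $\hat{\phi}$ injective, it would be an isomorphism $F_2\cong H$, forcing $H$ to be free and contradicting the previous step. Hence $\hat{\phi}$ is not injective, so it is not an isomorphism between $K$ and $H$, and $X$ is not free for substitution into $\Gpres{x,y,z}{xyz^{-1}=1}$ via the stated map.

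There is essentially no hard analytic step here: the non-freeness of $H$ is imported directly from the Higman example, so no fresh argument is needed for that. The only point requiring care is that the contradiction must be run through $H$ itself rather than merely through $G$, which is why the identification $H=\Ggen{a^4,b}$ is the genuine crux; once that bookkeeping and the two presentation simplifications are in place, the conclusion is immediate.
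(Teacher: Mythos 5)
Your proposal is correct, but it follows a genuinely different route from the paper's own proof. The paper argues directly and self-containedly: after noting that $K$ is free of rank $2$ (it eliminates $x$ via the relator, obtaining $K\cong F_{y,z}$, whereas you eliminate $z$), it exhibits the explicit pair of distinct elements $z^2y$ and $yz^3$ of $K$ and computes, using the relation $a^2b=ba^3$ in $G$, that
\[
\phi(z^2y)=a^8b=(a^2)^4b=b(a^3)^4=ba^{12}=\phi(yz^3),
\]
thereby producing a concrete nontrivial element of $\ker\hat{\phi}$ by a two-line calculation. You instead identify the image: from $ab^{-1}a^2=a^4b^{-1}$ in $G$ you correctly deduce $H=\Ggen{a^4,b}$, which by the preceding Higman discussion equals $G=\mathrm{BS}(2,3)$ and in particular is not free, so the free group $K\cong F_2$ cannot map isomorphically onto it. Your identification of $H$ and the surrounding logic are sound, but note where the weight of the argument falls: you outsource the essential content to the cited fact that $\Ggen{a^4,b}$ (equivalently $\mathrm{BS}(2,3)$) is not free, which is itself a nontrivial theorem (proved, e.g., via non-Hopficity or Higman's argument as presented in Pride's paper), whereas the paper's verification is entirely elementary, needs nothing beyond the defining relation, and moreover hands you an explicit kernel element. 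What your route buys in exchange is conceptual transparency: it pins down $H$ exactly ($H=G=\mathrm{BS}(2,3)$), explaining \emph{why} no isomorphism with $F_2$ can exist, rather than merely certifying a single coincidence of images.
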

\begin{proof}
It is a straightforward exercise to check that $X$ is a Nielsen reduced set of reduced words in the free group $F_{a,b}$, 
and hence $\Ggen{X} \leq F_{a,b}$ is freely generated by $X$.    
Certainly $\phi$ defines a homomorphism since $\phi(xyz^{-1})=1$ in $G$. 
Clearly, $K$ is isomorphic to the free group on $y$ and $z$. 
In particular the words $z^2 y$ and $yz^3$ represent distinct elements in the group $K$. On the other hand 
\[
\phi(z^2 y) = \phi(y z^{3}) 
\]
in $G$. 
Indeed, in $G$ we have $a^2b=ba^3$, and hence
\[
\phi(z^2y)=a^8b=(a^2)^4b=b(a^3)^4=ba^{12}=\phi(yz^3),
\]
as required.
\end{proof}

Note that as it stands the example in Proposition~\ref{thm_Higman} does not immediately give a special inverse monoid with interesting properties, since the set  
$X = \{ ab^{-1}a^2, b, a^{4} \}$ 
cannot be the set of minimal invertible words in an inverse monoid presentation (since for instance $a^4$ invertible implies $a$ is invertible and hence $a^4$ is not a minimal invertible piece).    
Also, in this example the subgroup generated by the pieces 
$X = \{ ab^{-1}a^2, b, a^{4} \}$
is in fact a one-relator group since (as explained before the statement of the theorem), but this theorem shows that it does not admit the ``obvious'' one relator presentation given by replacing the pieces by letters in the obvious way.   

However, this example is the first evidence that there may exist examples of one-relator special inverse monoids with group of units not being a one-relator group.  This will be explored further in the next section.

\section{A construction}
\label{sec:Construction}

In this section we shall give a general construction which will then be used in the next section to give examples of special inverse monoids that exhibit unexpected behaviour in terms of their groups of units.

The construction we give here was used in \cite{gray:undecidable} to give an example of a one-relator special inverse monoid with undecidable word problem.  
It has also been used in the papers \cite{GrayDolinka2021} and \cite{GraySilvaSzakacs} to construct counterexamples to questions about the prefix membership problem for one-relator groups, and the word problem for finitely presented inverse monoids with hyperbolic Sch\"{u}tzenberger graphs.  
To make this paper self contained we provide full details of the construction here. 
Also, we use slightly different notational conventions than those used in \cite{gray:undecidable}. 
Some of the results we need about this construction have already been proved in \cite{gray:undecidable}, but there are some other facts about the construction that we need here that are not proved in that paper, so we will provide the necessary proofs.  

\subsection{The construction}

Let $A=\{a_1,\dots,a_n\}$ be a finite non-empty alphabet. Let $Q = \{ r_i \::\: i \in I \}$ be a subset of $\overline{A}^*$, where $I$ is non-empty and we assume there is a distinguished symbol $1 \in I$. 
Let $W = \{ w_j \::\: j=1,\dots, k \}$ be a finite subset of $\overline{A}^*$. Let $t$ be a symbol not in $A$. 

We define two groups which depend on $Q$ by
\begin{align}
\label{eq:KQ}
K_Q &= \Gpres{A}{r_i=1 \; (i \in I)} \\
\label{eq:GQ}
G_Q &= \Gpres{A, t}{r_i=1 \; (i \in I)} = K_Q \ast F_t. 
\end{align} 

Given a list of words $u_1, \ldots, u_m \in \overline{A}^*$ we define  
\[
e(u_1, u_2, \ldots, u_m) = u_1 u_1^{-1} u_2 u_2^{-1} \ldots u_m u_m^{-1},
\]
noting that this clearly freely reduces to $1$ in the free group $F_A$. 

We also define two inverse monoids, the first depending on $Q$, and the second on both $Q$ and $W$, as follows:
\begin{align}
\label{eq:NQ}
N_Q &= \Ipres{A}{r_i=1 \; (i \in I)} \\
\label{eq:MQW}
M_{Q,W} &= \Ipres{A, t}{fr_1=1, 
r_i=1 \; (i \in I \setminus \{ 1 \})} 
\end{align}
where 
\begin{equation}
\label{eq:fdef}
f = 
e(a_1, \ldots, a_n,  
tw_1t^{-1}, \ldots, tw_kt^{-1}, 
a_1^{-1}, \ldots, a_n^{-1}) .  
\end{equation}

We make the following observations about the groups and monoids we have just defined:
\begin{enumerate}[leftmargin=10mm,itemsep=1mm,label=\textup{(M\arabic*)}]
\item \label{it:maxim}
$K_Q$ is the maximal group image of $N_Q$, and $G_Q$ is the maximal group image of $M_{Q,W}$
(under the natural homomorphisms).
\item \label{it:altpres}
Presentation \eqref{eq:MQW} for $M_{Q,W}$ is equivalent 
to
\begin{equation}
\label{eq:MQWalt}
M_{Q,W}= \Ipres{A, t}{ 
r_i=1 \ 
 a_j a_j^{-1} = a_j^{-1} a_j = 1, \ 
 t w_l t^{-1} t w_l^{-1} t^{-1} = 1 \ (i\in I,\ j \in [n],\ l\in [k])  
}
\end{equation}
in the sense that two words over $\overline{(A \cup \{t\})}$ are equal modulo the relations of 
\eqref{eq:MQW} if and only if they are equal modulo the relations \eqref{eq:MQWalt}.  
\item \label{it:ainv}
All the generators $a_j \in A$ are invertible in $M_{Q,W}$, while $t$ and all of the elements $t w_l t^{-1}$ are right invertible in $M$. 
\item \label{it:Bhomi}
$M_{Q,W}$ maps naturally onto the bicyclic monoid $B=\Ipres{b}{bb^{-1}=1}$ via
$a_j\mapsto 1$, $t\mapsto b$.
\item \label{it:tnoni}
$t$ is not invertible in $M_{Q,W}$.
\end{enumerate}

Indeed, observation  
\ref{it:maxim} 
is immediate from the definitions and the general fact that the maximal group image of an inverse monoid defined by a presentation is the group defined by the same presentation.  
\ref{it:altpres}
can be proved using Lemma~\ref{lem_good}. 
A full proof is given in \cite[Lemma~3.3]{gray:undecidable}. 
\ref{it:ainv} follows from \ref{it:altpres} and the defining relations in the presentation \eqref{eq:MQWalt}. 
\ref{it:Bhomi} is immediate from the fact that $M_{Q,W}$ is given by the presentation \eqref{eq:MQWalt} and then the observation that replacing $a$ by $1$ and $t$ by $b$ in that presentation just leaves the relation $bb^{-1}bb^{-1}=1$ which certainly holds in $B$.   
\ref{it:tnoni} follows from \ref{it:Bhomi} since $b$ is not invertible in the bicyclic monoid $B$, and the image of an invertible element of a monoid under a surjective homomorphism must again be an invertible element.    

We will make use of the following general lemma, which follows immediately from the universal properties of free products.

\begin{lem}\label{lem_submon_free_prod}
Let $G$ and $H$ be groups. Let $M$ be a submonoid of $G$ and $N$ be a submonoid of $H$. Then the submonoid of the free product $G \ast H$ generated by $M \cup N$ is isomorphic to $M \ast N$. 
\end{lem}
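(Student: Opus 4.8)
The plan is to realise the submonoid $S=\langle M\cup N\rangle\le G\ast H$ as a homomorphic image of the monoid coproduct $M\ast N$, and then to check that the comparison map is injective.

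First I would build the comparison map. Composing the inclusions $M\hookrightarrow G\hookrightarrow G\ast H$ and $N\hookrightarrow H\hookrightarrow G\ast H$ gives monoid homomorphisms $M\to G\ast H$ and $N\to G\ast H$; by the universal property of the monoid free product (coproduct) $M\ast N$ these induce a unique monoid homomorphism $\theta:M\ast N\to G\ast H$ restricting to these inclusions on the two factors. Since $M\ast N$ is generated by the images of $M$ and $N$, the image of $\theta$ is precisely the submonoid of $G\ast H$ generated by $M\cup N$, namely $S$. Thus it remains only to prove that $\theta$ is injective; this is the one real point of the argument, the existence of $\theta$ being automatic from the universal property.

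For injectivity I would compare normal forms in the two free products. Every non-identity element of $M\ast N$ has a unique expression $c_1c_2\cdots c_n$ (with $n\ge 1$) as an alternating product in which each $c_i$ is a non-identity element of $M$ or of $N$ and consecutive syllables lie in different factors. Because $M$ is a submonoid of $G$ we have $1_M=1_G$, so a syllable $c_i\in M\setminus\{1\}$ is a non-trivial element of $G$, and likewise for $N$ and $H$. Hence $\theta(c_1\cdots c_n)=c_1\cdots c_n$ is a reduced alternating word in the group free product $G\ast H$, i.e.\ a product of non-trivial syllables drawn alternately from $G$ and $H$. By the normal form theorem for free products of groups such a word represents a non-trivial element, and distinct normal forms of $M\ast N$ yield distinct reduced words and hence distinct elements of $G\ast H$. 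Therefore $\theta$ is injective, so it is an isomorphism onto its image $S$, as required.

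The main obstacle is the injectivity step, and in particular making sure the monoid normal form for $M\ast N$ transports cleanly to the group normal form for $G\ast H$; the observation that unlocks this is simply that a submonoid shares the ambient identity, so non-identity syllables remain non-trivial after inclusion into $G$ or $H$. If one preferred to avoid invoking the monoid normal form theorem, an alternative would be to verify directly that $S$ together with the two inclusions satisfies the universal property of $M\ast N$, but this ultimately requires the same normal-form bookkeeping.
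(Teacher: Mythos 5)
Your proposal is correct and matches the paper's intent: the paper states this lemma without a written proof, asserting only that it ``follows immediately from the universal properties of free products,'' and your argument is exactly the canonical justification behind that assertion --- the universal property yields the comparison map $\theta$, and injectivity follows by transporting the alternating normal form of the monoid free product $M\ast N$ to a reduced word in $G\ast H$. Your key observation, that a submonoid of a group contains the ambient identity (indeed the only idempotent of a group is $1_G$), so non-identity syllables stay non-trivial in $G$ and $H$, is precisely the point that makes the normal-form comparison go through, and it correctly fills in the one detail the paper's one-line justification glosses over.
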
 

The following lemma is probably well known, but we give a proof for completeness. 

\begin{lem}\label{lem_HandConjH}
Let $H$ be a group. Then the subgroup of the free product $G = H \ast F_t$ generated by $H \cup tHt^{-1}$ is isomorphic to $H \ast H$. 
\end{lem}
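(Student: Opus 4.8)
The plan is to exhibit an explicit isomorphism built from the universal property of free products, and then to establish injectivity using the normal form theorem for free products inside $G = H \ast F_t$. Write $H_1$ and $H_2$ for two copies of $H$, so that $H \ast H = H_1 \ast H_2$. First I would observe that the inclusion $H_1 \hookrightarrow G$, $h \mapsto h$, and the conjugation map $H_2 \to G$, $h \mapsto t h t^{-1}$, are both injective homomorphisms into $G$ (conjugation is always injective, with image $tHt^{-1}$). By the universal property of the free product these combine into a single homomorphism $\psi : H \ast H \to G$ whose image is exactly the subgroup $\langle H \cup tHt^{-1}\rangle$. It therefore suffices to prove that $\psi$ is injective.

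For injectivity I would take an arbitrary nontrivial $w \in H \ast H$ and write it in reduced form $w = x_1 x_2 \cdots x_n$ as an alternating product of nontrivial syllables drawn from $H_1$ and $H_2$. Applying $\psi$ syllable by syllable, each factor $x_i \in H_1$ contributes a single syllable $x_i \in H$, while each factor $x_i \in H_2$ contributes the block $t\,\overline{x}_i\,t^{-1}$, where $\overline{x}_i \in H$ is the corresponding element of $H$, nontrivial because $x_i \neq 1$. Reading the resulting word $\psi(w)$ over the two factors $H$ and $F_t = \langle t\rangle$ of $G$, the key point is that it is already an alternating sequence of nontrivial syllables: inside each block $t\,\overline{x}_i\,t^{-1}$ the syllables $t$ and $t^{-1}$ are separated by the nontrivial $H$-syllable $\overline{x}_i$, and any two consecutive blocks, as well as any block adjacent to an $H_1$-syllable, are always separated by a nontrivial $H$-syllable. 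Hence no two $F_t$-syllables and no two $H$-syllables are ever adjacent.

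It follows that $\psi(w)$ is a reduced word of positive syllable length in the free product $H \ast F_t$, so by the normal form theorem it represents a nontrivial element of $G$. Therefore $\ker\psi = \{1\}$, and $\psi$ is an isomorphism from $H \ast H$ onto $\langle H \cup tHt^{-1}\rangle$, as required.

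The main obstacle is the bookkeeping in the second step, namely verifying reducedness at every syllable boundary of $\psi(w)$: one must check the transition from an $H_2$-block ending in $t^{-1}$ to a following $H_1$-syllable, and from an $H_1$-syllable to a following $H_2$-block beginning with $t$, together with the edge cases where $w$ begins or ends in $H_2$ or consists of a single syllable. In each case the alternation $H, F_t, H, F_t, \dots$ with nontrivial syllables is preserved, so no cancellation occurs. No structural hypothesis on $H$ beyond being a group is needed, so once the normal form is set up the argument is purely formal.
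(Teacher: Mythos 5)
Your proof is correct and follows essentially the same route as the paper's: both arguments come down to applying the normal form theorem in $G = H \ast F_t$ to show that an alternating product of nontrivial elements from $H$ and $tHt^{-1}$ expands to a reduced (hence nontrivial) syllable sequence, since each $tht^{-1}$ contributes syllables $t,\,h,\,t^{-1}$ with no cancellation at the junctions. The only cosmetic difference is that the paper packages the conclusion by citing the free-product recognition criterion of Lyndon--Schupp (Lemma 1.7), whereas you unpack the same content by building $\psi : H \ast H \to G$ from the universal property and verifying injectivity directly.
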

\begin{proof}
Set $\overline{H} = t H t^{-1} \leq G = H * F_t$. 
Let $L$ be the subgroup of $G$ generated by $H \cup \overline{H}$. 
Now $H$ and $\overline{H}$ are subgroups of $L$ and $H \cap \overline{H} = \varnothing$ by the normal form theorem for elements in free products of groups. Given a reduced sequence $g_1, g_2, \ldots, g_n$ where the $g_i$ all belong to $L$ and alternate between $H$ and $\overline{H}$, if $n > 0$ then it again follows from the normal form theorem in $G$ that $g_1 g_2 \ldots g_n \neq 1$. Thus all the conditions of 
\cite[Lemma 1.7]{lyndonschupp} are satisfied, and we conclude that $L \cong H * \overline{H}$. Clearly $H \cong \overline{H}$ since they are conjugate subgroups of $G$. This completes the proof that $L \cong H * H$. 
\end{proof}

We are now in a position to state and prove the main result of this section which establishes the key properties of the monoid $M_{Q,W}$ that we will need in our applications and examples in the next section.  

\begin{thm}\label{thm_GeneralConstructionOneSided}
With the above definitions and notation, the special inverse monoid $M_{Q,W}$ has the following properties.
\begin{enumerate}[leftmargin=10mm,itemsep=1mm,label=\textup{(\roman*)}]
\item \label{it:constvi}
The presentation \eqref{eq:MQW} of $M_{Q,W}$ has the same number of defining relations as the original presentation \eqref{eq:KQ} of $K_Q$. 
\item \label{it:consti}
The monoid $M_{Q,W}$ is E-unitary.
\item \label{it:constii}
Let $T_W$ be the submonoid of the group $K_Q$ generated by $W$. Then the submonoid of right units $R(M_{Q,W})$ of $M_{Q,W}$ is isomorphic to the submonoid of the group $G_Q=K_Q \ast F_t$ generated by 
$\{ t\} \cup K_Q \cup t T_W t^{-1}$. 
\item \label{it:constiii}
Let $V_{Q,W}$ be the submonoid of $R(M_{Q,W})$ generated by 
$A \cup A^{-1} \cup tWt^{-1}$.  
Then $V_{Q,M}$ is isomorphic to the free product $K_Q \ast T_W$ and the complement of $V_{Q,W}$ in $R(M_{Q,W})$ is an ideal of $R(M_{Q,W})$.  
\item \label{it:constiv}
The group of units $U(M_{Q,W})$ of $M_{Q,W}$ is isomorphic to the free product 
$
K_Q \ast H_{W'} 
$
where $H_{W'}$ is the subgroup of $K_Q$ generated by the subset $W'$ of $W$ defined by
\[
W' = \{ w_l \in W: tw_lt^{-1} \ \mbox{is invertible in} \; M_{Q,W} \}. 
\]
In particular if $W = W^{-1}$ then $W'=W$ and so $U(M_{Q,W}) \cong K_Q \ast H_W$.   
\item \label{it:constv}
If $R(M_{Q,W})$ is finitely presented then  $K_Q$ and $T_W$ are both finitely presented. 
\end{enumerate} 
\end{thm}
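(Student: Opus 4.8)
The plan is to deduce finite presentability of $K_Q$ and $T_W$ from the structural description of the monoid of right units $R=R(M_{Q,W})$, and in particular from its free-product decomposition. Recall from part~\ref{it:constiii} that $V_{Q,W}\cong K_Q\ast T_W$ (a free product of monoids) sits inside $R$ as the submonoid generated by $A\cup A^{-1}\cup tWt^{-1}$, and that its complement $I=R\setminus V_{Q,W}$ is an ideal of $R$. The strategy has three steps: (1) show that $R$ finitely presented forces $V_{Q,W}$ to be finitely presented; (2) recover the free factors $K_Q$ and $T_W$; and (3) upgrade the statement for $K_Q$ from monoids to groups.

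For step~(1) the first task is to show that $I$ is a \emph{finitely generated} ideal; in fact I would show it is the principal ideal generated by $t$. Using the realisation of $R$ from part~\ref{it:constii} as the submonoid of $G_Q=K_Q\ast F_t$ generated by $\{t\}\cup K_Q\cup tT_Wt^{-1}$, every $x\in R$ is a product of these generators. If none of the factors equals $t$, then $x$ is a product of generators lying in $K_Q\cup tT_Wt^{-1}$, which are exactly the generators of $V_{Q,W}$, so $x\in V_{Q,W}$; hence any $x\in I$ must use at least one factor $t$, and splitting $x$ at the first such occurrence writes $x=utv$ with $u,v\in R\cup\{1\}$. Thus $I=R^1tR^1$. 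The Rees quotient $R/I$ is then precisely $V_{Q,W}^0$ (the free product with a zero adjoined), and the standard fact that a Rees quotient of a finitely presented monoid by a finitely generated ideal is again finitely presented shows $V_{Q,W}^0$ is finitely presented. Finally, adjoining or removing a zero preserves finite presentability: given a finite presentation $\mathrm{Mon}\langle X\mid\mathcal{R}\rangle$ of $N^0$ one checks that no word over $X\setminus\{0\}$ can represent the absorbing element $0$, so any derivation between two such words stays within $X\setminus\{0\}$, and deleting the generator $0$ together with all relations involving it yields a finite presentation of $N$. Applying this with $N=V_{Q,W}$ gives that $V_{Q,W}$ is finitely presented.

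Step~(2) is routine. In a monoid free product each free factor is a retract: the retraction $K_Q\ast T_W\to K_Q$ is the homomorphism fixing $K_Q$ and collapsing $T_W$ to $1$, given by the universal property, and symmetrically for $T_W$. Since a retract of a finitely presented monoid is finitely presented, applying this to $V_{Q,W}\cong K_Q\ast T_W$ shows that both $K_Q$ and $T_W$ are finitely presented as monoids, which already settles the claim for the monoid $T_W$. For step~(3), $K_Q$ is a group, so I would invoke the standard fact that a group which is finitely presented as a monoid is finitely presented as a group (one adds the finitely many inversion relations). This completes the argument.

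I expect step~(1) to be the main obstacle, since the complement ideal $I$ is infinite, so no finite Rees index argument is available; the crux is the observation that $I$ is nonetheless \emph{principal}, generated by $t$, which is what unlocks the Rees-quotient and zero-removal machinery. The remaining steps are applications of standard closure properties — retracts, free products, and the passage between monoid and group presentations.
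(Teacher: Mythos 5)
Your proposal addresses only the last assertion, item \ref{it:constv}, taking items \ref{it:constii} and \ref{it:constiii} as given; this matches the paper's own logical structure, since its proof of \ref{it:constv} is likewise built on \ref{it:constiii}. Within that scope your argument is correct, but the crucial first step is done by a genuinely different route. The paper disposes of it in one line by invoking the general fact that finite presentability is inherited by a submonoid whose complement is an ideal, citing a corollary of \cite[Theorem~B]{GRBoundaries2011}. You instead exploit the specific structure of $R=R(M_{Q,W})$: since every expression of an element of $R$ over the generators $\{t\}\cup K_Q\cup tT_Wt^{-1}$ that avoids the bare letter $t$ lands in $V_{Q,W}$, the complement ideal is \emph{principal}, $I=R^1tR^1$, so the Rees quotient $R/I\cong V_{Q,W}^0$ is finitely presented (quotient by a finitely generated ideal), and your zero-removal argument --- every generating set of $N^0$ must contain $0$, no word over the remaining generators represents $0$, and derivations between such words never pass through a word containing a zero generator --- recovers finite presentability of $V_{Q,W}$. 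This is more elementary and self-contained, avoiding the boundaries machinery entirely, at the cost of being special to this construction; the paper's cited fact requires no finite generation of the ideal and applies to arbitrary ideal complements. One small point you should make explicit: as written you only establish $I\subseteq R^1tR^1$; for the reverse inclusion you need $t\in I$, i.e.\ $t\notin V_{Q,W}$, which is immediate from the homomorphism onto the bicyclic monoid in observation \ref{it:Bhomi} (or the exponent-sum map $G_Q\to\Z$ killing $K_Q$), under which $V_{Q,W}$ maps to $1$ while $t$ does not; then $R^1tR^1\subseteq I$ because $I$ is an ideal by \ref{it:constiii}. Your remaining steps coincide with the paper's: the free factors $K_Q$ and $T_W$ are retracts of $V_{Q,W}\cong K_Q\ast T_W$, retracts of finitely presented monoids are finitely presented (the paper cites the proof of \cite[Theorem~3.4]{WangPride2000} for this), and for the group $K_Q$ finite presentability as a monoid and as a group agree --- a point the paper leaves implicit.
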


\begin{proof}
\ref{it:constvi}
is obvious since they both have $|I|$ defining relations, and \ref{it:consti} is \cite[Theorem 3.8]{gray:undecidable}.
\medskip

\ref{it:constii} By \cite[Lemma~1.5]{margolis93}, since $M_{Q,W}$ is E-unitary, it follows that the canonical homomorphism $\theta: M_{Q,W} \rightarrow G_Q=K_Q\ast F_t$ from $M_{Q,W}$ onto its maximal group image
(see \ref{it:maxim}) induces an embedding of each $\mathscr{R}$-class of $M$ into $G_Q$. In particular $\theta$ induces an embedding of the right units $R(M_{Q,W})$ into $G_Q$. By the argument given in the proof of 
\cite[Proposition~4.2]{imm01} it follows that $R(M_{Q,W})$ is generated by the prefixes of the defining relators in the presentation \eqref{eq:MQWalt} of $M_{Q,W}$.  
Note that in \cite[Proposition~4.2]{imm01} an assumption is made that the defining relators are all cyclically reduced words, but this hypothesis is not used in the proof, and so the statement holds with that assumption removed (cf. Theorem \ref{IMM:gens}).

This implies that $R(M_{Q,W})$ is isomorphic to the submonoid of $G_Q$ generated by 
the prefixes of the defining relators in the presentation  
\eqref{eq:MQWalt}.
We claim that this is equal to the submonoid of $G_Q$ generated by the set 
\[
Y = A \cup A^{-1} \cup \{t \} \cup \{ tw_l t^{-1}: l \in [k] \}. 
\]
Indeed, we have already observed that all these words represent right invertible elements (see \ref{it:ainv}).
Furthermore, 
all the prefixes of relators in \eqref{eq:MQWalt} are equal 
in $M_{Q,W}$ 
to products of those words.
This is immediate for the prefixes of $r_i$, $a_ja_j^{-1}$, $a_j^{-1}a_j$, as they are products of generators from $A$.
The proper prefixes of $tw_lt^{-1}$ are products of generators from $A\cup\{t\}$.
Finally, longer prefixes of $tw_lt^{-1}tw_l^{-1}t^{-1}$ are equal 
in $M_{Q,W}$ 
to prefixes of $tw_lt^{-1}$ by 
Lemma \ref{lem_good}\ref{it:lg1}. 
This completes the proof of the claim that $R(M_{Q,W})$ is isomorphic to the submonoid of $G_Q$ generated by $Y$.   

Since the submonoid of $G_Q$ generated by $A \cup A^{-1}$ is $K_Q$, 
and the submonoid of $G_Q$
generated by $\{ tw_j t^{-1}: j \in [k] \}$ is $t T_W t^{-1}$, this completes the proof of \ref{it:constii}. 
\medskip

\ref{it:constiii} 
As in the previous part,  E-unitarity of $M_{Q,W}$ implies that $V_{Q,W}$ is isomorphic to the submonoid of
 $G_Q$ generated by $K_Q \cup t T_W t^{-1}$. 
 By Lemma~\ref{lem_HandConjH} the submonoid of $K_Q \ast F_t$ generated by $K_Q \cup t K_Q t^{-1}$ is isomorphic to $K_Q \ast K_Q$. Combining this fact with Lemma~\ref{lem_submon_free_prod} we conclude that the submonoid of $K_Q \ast F_t$ generated by $K_Q \cup t T_W t^{-1}$ is isomorphic to $K_Q \ast T_W$. 
Hence we have shown that $V_{Q,W}$ is isomorphic to $K_Q \ast T_W$. 

To see that $R(M_{Q,W}) \setminus V_{Q,W}$ is an ideal in $R(M_{Q,W})$ it suffices to observe that 
under the natural epimorphism from $M_{Q,W}$ onto the bicyclic monoid $B$
(see \ref{it:Bhomi}), the submonoid $R(M_{Q,W})$ 
is mapped onto
$R'=\{ b^i\::\: i\in \N_0\}$, while $R(M_{Q,W}) \setminus V_{Q,W}$ is mapped onto $V'=\{ b^i\::\: i\in \N\}$, and that
$V'$ is an ideal of $R'$. 
Since the preimage of an ideal, with respect to a surjective homomorphism, is itself an ideal this 
completes the proof that the complement of $V_{Q,W}$ in $R(M_{Q,W})$ is an ideal of $R(M_{Q,W})$.  
 
\medskip

\ref{it:constiv}
Observe that $U(M_{Q,W})$ is contained in $R(M_{Q,W})$ and the complement $R(M_{Q,W}) \setminus U(M_{Q,W})$ is an ideal of the monoid $R(M_{Q,W})$. 
Also, it follows from the proof of part (iii) that $R(M_{Q,W})$ is the submonoid of $M_{Q,W}$ generated by $\{t\} \cup A \cup A^{-1} \cup tWt^{-1}$. 
Since 
$R(M_{Q,W}) \setminus U(M_{Q,W})$ is an ideal of the monoid $R(M_{Q,W})$
it follows that  
\[
Z=(\{t\} \cup A \cup A^{-1} \cup tWt^{-1}) \cap U(M_{Q,W})
\]
is a monoid generating set for $U(M_{Q,W})$ 
(this follows from the more general fact \cite[Lemma~3.5]{gray:undecidable}). 
By \ref{it:tnoni}, $t$ is not invertible in $M_{Q,W}$ and hence $t \not\in Z$.  
By \ref{it:ainv}, $A \cup A^{-1} \subseteq Z$. 
The remaining elements of $Z$ are those from the set $tWt^{-1} \cap U(M_{Q,W})$ which is by definition equal to the set $tW't^{-1}$ where    
\[
W' = \{ w_l \in W: tw_lt^{-1} \ \mbox{is invertible in} \; M_{Q,W} \}. 
\]
This proves that the group of units $U(M_{Q,W})$ is isomorphic to the submonoid of   
$R(M_{Q,W})$ generated by 
\[
Z = A \cup A^{-1} \cup tW't^{-1}. 
\] 
It follows that the image of $U(M_{Q,W})$ under the isomorphism from \ref{it:constii} is the 
submonoid of $G_Q = K_Q\ast F_t$ generated by image of $Z$ under  
the isomorphism from \ref{it:constii}. 
Since the submonoid of $K_Q\ast F_t$ generated by $A \cup A^{-1}$ is $K_Q$   
it follows that 
the image of $U(M_{Q,W})$ under the isomorphism from \ref{it:constii}
is precisely the submonoid of 
$K_Q \ast F_t$ 
generated by $K_Q\cup tW't^{-1}$.
Since $U(M_{Q,W})$ is a group this must also be equal to the subgroup 
of 
$K_Q \ast F_t$ 
generated by $K_Q\cup tW't^{-1}$.
Under the isomorphism from \ref{it:constiii}, this corresponds to 
$K_Q\ast H_{W'}$, proving the first assertion.
For the second assertion observe that if $W=W^{-1}$ then $W'=W$ immediately from the presentation
\eqref{eq:MQWalt} for $M_{Q,W}$.
\medskip

\ref{it:constv} If $R(M_{Q,W})$ is finitely presented then by part \ref{it:constiii} it follows that $V_{Q,W}$ is finitely presented since its complement is an ideal. 
The fact that finite presentability is inherited by submonoids with ideal complement is well know e.g. it is a corollary of \cite[Theorem~B]{GRBoundaries2011}. 
Therefore $K_Q \ast T_W$ is finitely presented which implies that $K_Q$ and $T_W$ are both finitely presented being retracts of $K_Q \ast T_W$.
Here we have used the fact that any retract of a finitely presented monoid is again finitely presented. This follows from the proof of  \cite[Theorem~3.4]{WangPride2000} 
(see in particular the remark in  
\cite{WangPride2000} immediately after the proof of Theorem~3.4 of that paper).
\end{proof}

\section{Applications to further examples}
\label{sec:applications}

The central theme in this article as outlined in Section~\ref{sec:intro} has been to investigate the extent to which analogues of the results of Adjan/Makanin/Zhang for finitely presented special monoids can be proved for special inverse monoid presentations.  
Both the example at the start of Section~\ref{sec:Makanin} and the O'Hare monoid example considered in Subsection~\ref{subsec:Ohare} show that while the obvious na\"{i}ve generalisation to special inverse monoids does not hold, 
by an appropriate choice of basis, the hypotheses of Theorem~\ref{thm:Usandwiched} are satisfied by these two examples. In particular in each case their groups of units are one-relator groups. 
As explained in Section~\ref{sec:intro} it is natural to ask whether the assumptions of Theorem~\ref{thm:Usandwiched} are perhaps always satisfied, or at least always in the case of one-relator presentations.
The main results in Section~\ref{sec:Makanin} identify several sufficient conditions under which the assumptions of Theorem~\ref{thm:Usandwiched} are satisfied, see for example the conditions \ref{it:F1},  \ref{it:F2},  and \ref{it:F3}.  

In this section we will use the general construction and Theorem~\ref{thm_GeneralConstructionOneSided} from the previous section to prove that in general none of the main results of Adjan/Makanin/Zhang for special monoids can be extended to special inverse monoids. 

We are also prove a result which shows a close relationship between finite presentability of the units of special one-relator inverse monoids and the question of coherence of one-relator groups.

\subsection{A one-relator special inverse monoid whose group of units if not one-relator}

If we input a one-relator group $K_Q$ into the construction from Section~\ref{sec:Construction} then 
Theorem~\ref{thm_GeneralConstructionOneSided}\ref{it:constiv}
shows that the group of units of the one-relator special inverse monoid $M_{Q,W}$ is isomorphic to the free product of groups $K_Q \ast H_{W}$ where by appropriate choice of $W$ in the construction, the group 
$H_W'$ can be any finitely generated subgroup of $K_Q$ that we please. 
In particular the set $W$ can be chosen so that the group of units of 
the one-relator special inverse monoid 
$M_{Q,W}$ is isomorphic to $K_Q \ast K_Q$. 
Since in general the free product 
$K_Q \ast K_Q$ 
of a one-relator group $K_Q$ with itself is not a one-relator group, this can be used to construct examples of special one-relator inverse monoids whose groups of units are not one-relator. 
This leads to the following result. 

\begin{thm}\label{thm:notOneRelUnits}
There exists a one-relator special inverse monoid $M = \Ipres{A}{w=1}$ whose group of units $G$ is not a one-relator group with respect to any finite generating set. 
Moreover, $M$ can be chosen to be E-unitary.  
\end{thm}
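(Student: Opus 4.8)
The plan is to feed a carefully chosen one-relator group into the construction of Section~\ref{sec:Construction} and read off its group of units from Theorem~\ref{thm_GeneralConstructionOneSided}. Concretely, I would take the one-generator alphabet $A=\{a\}$, index set $I=\{1\}$, and single relator $r_1\equiv aa$, so that $K_Q=\Gpres{a}{a^2=1}\cong\mathbb{Z}_2$ is a genuine one-relator group. For the auxiliary word set I would take $W=\{a,a^{-1}\}$, which is symmetric ($W=W^{-1}$) and generates all of $K_Q$. The resulting monoid $M=M_{Q,W}=\Ipres{a,t}{fr_1=1}$ has a single defining relation by Theorem~\ref{thm_GeneralConstructionOneSided}\ref{it:constvi} (since $|I|=1$), and is E-unitary by Theorem~\ref{thm_GeneralConstructionOneSided}\ref{it:consti}; this already secures both the ``one-relator'' and the ``moreover E-unitary'' parts of the statement.

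The key step is the identification of the group of units. Since $W=W^{-1}$ we have $W'=W$, so Theorem~\ref{thm_GeneralConstructionOneSided}\ref{it:constiv} gives
\[
U(M)\cong K_Q\ast H_W = K_Q\ast K_Q\cong \mathbb{Z}_2\ast\mathbb{Z}_2,
\]
the infinite dihedral group. It then remains only to show that this group admits no one-relator presentation with respect to \emph{any} finite generating set.

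For that final step I would use an isomorphism invariant, so that the argument is automatically independent of the chosen generating set. The abelianisation of $\mathbb{Z}_2\ast\mathbb{Z}_2$ is $\mathbb{Z}_2\oplus\mathbb{Z}_2$, whose torsion subgroup is \emph{not} cyclic. On the other hand, any finitely generated one-relator group $\Gpres{x_1,\dots,x_m}{s=1}$ has abelianisation isomorphic to $\mathbb{Z}^m$ modulo the single exponent-sum vector of $s$, which is always of the form $\mathbb{Z}^{m-1}\oplus\mathbb{Z}/d\mathbb{Z}$ (or $\mathbb{Z}^m$ when $s$ has trivial exponent sums); in every case the torsion subgroup of the abelianisation is cyclic. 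Comparing these invariants shows $\mathbb{Z}_2\ast\mathbb{Z}_2$ cannot be one-relator, completing the proof.

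The only genuine obstacle is conceptual rather than computational: one must rule out \emph{all} finite one-relator presentations simultaneously, not merely the ``obvious'' one obtained by replacing pieces by letters. Phrasing the obstruction as an isomorphism invariant of the abelianisation handles this cleanly; alternatively one could invoke B.~B.~Newman's theorem on torsion in one-relator groups, which forces the finite subgroups of a one-relator group with torsion to form a single conjugacy class, whereas $\mathbb{Z}_2\ast\mathbb{Z}_2$ has two non-conjugate subgroups of order two by the Kurosh subgroup theorem.
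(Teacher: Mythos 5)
Your proposal is correct, and it follows the same skeleton as the paper: both feed a one-relator group $K_Q$ into the Section~\ref{sec:Construction} construction with $W=W^{-1}$ generating a copy of $K_Q$, invoke Theorem~\ref{thm_GeneralConstructionOneSided}\ref{it:constvi}, \ref{it:consti}, \ref{it:constiv} to get a one-relator E-unitary special inverse monoid with $U(M)\cong K_Q\ast K_Q$, and then show this free product is not one-relator via an isomorphism invariant. Where you differ is in the choice of $K_Q$ and of the invariant. The paper takes $K_Q$ to be the genus-$2$ surface group $\Gpres{a,b,c,d}{[a,b][c,d]=1}$, which is torsion-free, and rules out one-relator presentations of $K_Q\ast K_Q$ using Lyndon's Identity Theorem: a torsion-free one-relator group has $H_2\in\{0,\mathbb{Z}\}$, whereas $H_2(K_Q\ast K_Q)\cong\mathbb{Z}\oplus\mathbb{Z}$. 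You instead take $K_Q\cong\mathbb{Z}_2$, obtaining the infinite dihedral group $\mathbb{Z}_2\ast\mathbb{Z}_2$, and observe that the torsion subgroup of the abelianisation of any one-relator group $\Gpres{x_1,\dots,x_m}{s=1}$ is cyclic (Smith normal form of the single exponent-sum vector), while $(\mathbb{Z}_2\ast\mathbb{Z}_2)^{\mathrm{ab}}\cong\mathbb{Z}_2\oplus\mathbb{Z}_2$ has non-cyclic torsion; this is generating-set independent, as required. Your route buys a strictly more elementary obstruction (first homology rather than $H_2$ and Lyndon's theorem) and arguably the smallest possible example; the paper's route buys a torsion-free counterexample, which the elementary abelianisation invariant could never detect (a torsion-free group has torsion-free abelianisation of its surface-group factors' products only up to the free part, so one genuinely needs $H_2$ there). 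Two minor remarks on your closing aside: the result that all elements of finite order in a one-relator group $\Gpres{X}{r^n=1}$ are conjugate to powers of $r$ is due to Karrass, Magnus and Solitar rather than B.~B.~Newman (Newman's contribution is the spelling theorem), and to run that alternative argument you should first note that a one-relator group with torsion must have relator a proper power, so that all its finite subgroups are conjugate into the single finite cyclic subgroup $\Ggen{r}$, contradicting the two non-conjugate order-two free factors of $\mathbb{Z}_2\ast\mathbb{Z}_2$; but since your main argument is the abelianisation count, nothing essential hinges on this.
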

\begin{proof}
Let $A = \{a,b,c,d\}$, let $Q = \{ aba^{-1}b^{-1}cdc^{-1}d^{-1} \}$ and set $W = \{a^{\pm 1},b^{\pm 1},c^{\pm 1},d^{\pm 1}\}$. 
Then by Theorem~\ref{thm_GeneralConstructionOneSided} \ref{it:constvi}, \ref{it:consti}, \ref{it:constiv}  the monoid
$M=M_{Q,W}$ is a one-relator E-unitary special inverse monoid 
with group of units $U(M) \cong K \ast K$ where
\[
K = K_Q = 
\Gpres{a,b,c,d}{[a,b][c,d]=1} =
\Gpres{a,b,c,d}{aba^{-1}b^{-1}cdc^{-1}d^{-1}=1}. 
\]
Note that $K$ is a torsion-free one-relator group. In fact it is a one-relator surface group: the fundamental group of a surface of genus $2$. To complete the proof it now suffices to prove that the free product $K \ast K$ of this one-relator group with itself is not a one-relator group with respect to any finite generating set. 
One way to see this is as follows.   

It follows from Lyndon's Identity Theorem \cite{lyndon50}, 
see also \cite[page~37, Example 3]{brown94}, 
that $H_2 (K) = \mathbb{Z}$, where $H_2(K)$ denotes the second homology group of $K$. 
Indeed, it follows from Lyndon's results that if $G = \Gpres{X}{s=1}$ is a    
torsion-free one relator group, where $s$ is a cyclically reduced word, then 
\[
H_2(G) \cong \begin{cases}
\mathbb{Z} & \mbox{if } s \in F_X' \\
0 & \mbox{otherwise,} 
\end{cases}
\]
where $F_X'$ is the derived subgroup of the free group. 
From $H_2(K) = \mathbb{Z}$ it follows by 
\cite[Corollary 6.2.10]{weibel94}
or 
\cite[page 220, Theorem~14.2]{HiltonAndStambach}
that 
\[
H_2(K \ast K) \cong 
H_2(K) \oplus H_2(K) \cong 
\mathbb{Z} \oplus \mathbb{Z}.
\]
Clearly $K \ast K$ is torsion free, since $K$ is torsion free, and thus by Lyndon's result above it follows that $K \ast K$ cannot be a one-relator group with respect to any generating set. 
\end{proof}

\subsection{A one-relator special inverse monoid whose 
group of units is finitely presented but 
the monoid of right units is not finitely presented}

An important structural result arising from the work of Adjan/Makanin/Zhang is that the monoid of right units of a finitely presented special monoid is isomorphic to a free product of its group of units and a finitely generated free monoid.  
The results in this subsection will show that this is far from being true in the case of special inverse monoids.  

Theorem~\ref{thm_GeneralConstructionOneSided}\ref{it:constv}
tells us that, 
for that construction,  
finite presentability of 
the right units $R(M_{Q,W})$ of the special inverse monoid $M_{Q,W}$ is determined by finite presentability of the group $K_Q$, and of the submonoid $T_W$ of $K_Q$. 
Since there are examples of
finitely generated submonoids of one-relator groups that are not finitely presented, this leads to the following result.  

\begin{thm}\label{thm_rightUnitsNotFP}
There exists a one-relator special inverse monoid $M = \Ipres{A}{r=1}$ with 
the following properties:
\begin{itemize}
\item the group of units $G$ of $M$ is finitely presented, and
\item the submonoid of right units $R$ of $M$ is finitely generated but not finitely presented.      
\end{itemize}
Hence there is no finitely presented monoid $N$ such that 
$R \cong G \ast N$.   
In particular, there is no free monoid $F$ such that $R \cong G \ast F$.   
Throughout, $M$ can be chosen to be E-unitary.  
\end{thm}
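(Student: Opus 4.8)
The plan is to run the construction of Section~\ref{sec:Construction} on a carefully chosen one-relator group $K_Q$ and finite set $W$, and to read the two bullet points off Theorem~\ref{thm_GeneralConstructionOneSided}. I would take $K_Q$ to be the free group $F_{a,b}$ (a one-relator group, e.g. presented as $\Gpres{a,b,c}{c=1}$ so that $|I|=1$), and I would choose $W$ to consist of finitely many \emph{positive} words, $W\subseteq\{a,b\}^\ast$. Writing $M=M_{Q,W}$, parts \ref{it:constvi} and \ref{it:consti} of Theorem~\ref{thm_GeneralConstructionOneSided} immediately give that $M$ is a one-relator, $E$-unitary special inverse monoid, while the proof of part \ref{it:constii} shows that the monoid of right units $R=R(M)$ is finitely generated, namely by the finite set $\{t\}\cup A\cup A^{-1}\cup tWt^{-1}$. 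Thus the whole problem reduces to arranging, purely through the choice of $W$, that the group of units is finitely presented while $R$ is not.

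The first bullet is handled by the positivity of $W$. In the notation of Theorem~\ref{thm_GeneralConstructionOneSided}\ref{it:constiv} we have $W'=\{w\in W: twt^{-1}\text{ is invertible in }M\}$ and $U(M)\cong K_Q\ast H_{W'}$, so it suffices to prove $W'=\varnothing$. If $twt^{-1}$ were invertible then its inverse $tw^{-1}t^{-1}$ would also be a right unit, hence would lie in $R(M)$; via the embedding of $R(M)$ into $G_Q=K_Q\ast F_t$ from part \ref{it:constii} and the free-product normal form, this forces $w^{-1}\in T_W=\Mgen{W}$, the submonoid of $K_Q$ generated by $W$. But $T_W$ is a submonoid of the free monoid $\{a,b\}^\ast$, so it contains no inverse of a non-empty word. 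Hence $W'=\varnothing$, the group $H_{W'}$ is trivial, and $U(M)\cong K_Q=F_{a,b}$ is finitely presented.

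The second bullet is where the real work lies. By the contrapositive of Theorem~\ref{thm_GeneralConstructionOneSided}\ref{it:constv}, it suffices to arrange that $T_W=\Mgen{W}$, viewed as a submonoid of $\{a,b\}^\ast$ (equivalently of $F_{a,b}$, since products of positive words involve no cancellation), is finitely generated but \emph{not} finitely presented. The main obstacle, therefore, is to exhibit a concrete finite set $W$ of positive words whose generated submonoid is not finitely presented, and to prove this non-finite-presentability. Finitely generated, non-finitely-presented submonoids of free monoids are classical; I would select one and verify non-finite-presentability directly, for instance by exhibiting an infinite irredundant family of relations among the generators arising from the overlap structure of the words of $W$ inside $\{a,b\}^\ast$. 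With such a $W$ in hand, part \ref{it:constv} yields that $R$ is not finitely presented.

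Finally the displayed consequences are formal. Since a free product of two finitely presented monoids is again finitely presented (one takes the disjoint union of presentations of the factors), an isomorphism $R\cong G\ast N$ with $G=U(M)$ finitely presented and $N$ finitely presented would make $R$ finitely presented, contradicting the second bullet; hence no finitely presented $N$ exists. In particular there is no free monoid $F$ with $R\cong G\ast F$: a free monoid of finite rank is finitely presented and so is excluded by the previous sentence, whereas a free monoid of infinite rank is not finitely generated and so cannot be a retract of the finitely generated $R$. $E$-unitarity of $M$ is again part \ref{it:consti}, completing the proof.
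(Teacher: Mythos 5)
Your proposal is correct and essentially reproduces the paper's own proof: both take $K_Q$ to be a free group (the paper sets $Q=\varnothing$ so the single relator is just $f$, while you realise $F_{a,b}$ as the one-relator group $\Gpres{a,b,c}{c=1}$), choose $W$ to be a finite set of positive words generating a non-finitely-presented submonoid of the free monoid (the paper cites \cite[Example 4.5]{campbell96} rather than leaving the example to be selected, as you do), and then read both bullet points and the formal consequences off Theorem~\ref{thm_GeneralConstructionOneSided}\ref{it:constvi}, \ref{it:consti}, \ref{it:constii}, \ref{it:constv} exactly as you propose. The only local divergence is the first bullet, where the paper bypasses your $W'=\varnothing$ normal-form computation (which is correct but unnecessary) by observing that $E$-unitarity embeds the finitely generated group of units into the free maximal group image $F_{A\cup\{t\}}$, so it is free of finite rank and hence finitely presented.
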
 
\begin{proof}
Let $A = \{a_1, \ldots, a_n\}$, 
let $Q= \varnothing$ and let $W = \{w_1, \ldots, w_k\}$ be a finite subset of $A^*$ such that the submonoid of $A^*$ generated by $W$ is not finitely presented
(e.g. \cite[Example 4.5]{campbell96}). Then we have 
\[
M_{Q,W} = \Ipres{A,t}
{
e(a_1, \ldots, a_n,
tw_1t^{-1}, \ldots, tw_kt^{-1},
a_1^{-1}, \ldots, a_n^{-1}
) =1
}.
\]
The maximal group image of $M_{Q,W}$ is the free group $G_Q=F_{A \cup \{ t \}}$, and $K_Q = F_A$. The submonoid $T_W$ of $K_Q$ generated by $W$ is not finitely presented by choice of $W$, and hence by Theorem~\ref{thm_GeneralConstructionOneSided}\ref{it:constv} the submonoid of right units $R(M_{Q,W})$ is not finitely presented. 
The submonoid of right units $R(M_{Q,W})$ is finitely generated by 
Theorem~\ref{thm_GeneralConstructionOneSided}\ref{it:constii}.
Since $M_{Q,W}$ is E-unitary and the maximal group image is free it follows that the group of units of $M_{Q,W}$ is a finitely generated free group and in particular is finitely presented. 
Since the group of units $G$ is finitely  presented it follows that for any finitely presented monoid $N$ the free product $G \ast N$ is also finitely presented. 
As $R$ is not finitely presented we conclude that      
there is no finitely presented monoid $N$ such that
$R \cong G \ast N$.
For the last statement, if $R \cong G \ast F$ then since $R$ is finitely generated it would follow that $F$ is finitely generated and hence finitely presented (since $F$ is free). But since $G$ is finitely presented this contradicts the fact that $R \cong G \ast F$ is not finitely presented.
\end{proof}

\begin{remark}
If we modify the above construction slightly and instead simply take the inverse monoid 
\[
\Ipres{A,t}
{e(tw_1t^{-1}, \ldots, tw_kt^{-1}) 
=1
}
\]
then again the right units will not be finitely presented, but the group of units in this case will be the trivial group.
To prove the latter statement it is sufficient to observe that the elements $tw_lt^{-1}$ (with $w_l$ non-empty)  are not invertible in
the submonoid $tT_Wt^{-1}$ of $G_{Q,W}=F_{A \cup \{ t \}}$. 
To see this note that  in the proof of Theorem~\ref{thm_rightUnitsNotFP} we took $W = \{w_1, \ldots, w_k\}$ be a certain finite subset of the free monoid $A^*$. Hence the submonoid $T_W$ of $K_Q = F_A$ generated by $W$ must also be contained in $A^*$ and hence must have trivial group of units, since $T_W \subseteq A^* \subseteq F_A$ and $A^*$ has trivial group of units. Since the submonoid $tT_Wt^{-1}$ of $G_{Q,W}=F_{A \cup \{ t \}}$ is isomorphic to the monoid $T_W$ it follows that $tT_Wt^{-1}$ also has trivial group of units.    
\end{remark}

\begin{remark}
The easy example $M = \Ipres{a,b}{aba^{-1}b^{-1}=1}$ shows that even in cases where the group of units and the monoid of right units are both finitely presented, it is not the case that the monoid of right units decomposes as the free product of the groups of units and a free (or free inverse) monoid. 
Indeed, in this example it can be shown that the group of units is trivial, while the submonoid of right units is 
isomorphic to the free 
commutative monoid of rank $2$. 

Indeed, 
since $aba^{-1}b^{-1}$ is cyclically reduced it follows that the monoid $M$ is E-unitary, hence the submonoid of right units $R$ of $M$ 
is isomorphic to the submonoid of the group 
$G = \Gpres{a,b}{aba^{-1}b^{-1}=1}$ 
generated by the prefixes $a$, $ab$, and $aba^{-1}=b$. This is clearly equal to 
the submonoid of this group generated by $\{a,b\}$  
which the free commutative monoid of rank 2. 
The group of units of the free commutative monoid of rank 2 is easily seen to be the trivial group, from which it follows that $M$ has trivial group of units, as claimed. 
\end{remark}

\subsection{A finitely presented special inverse monoid whose group of units is not finitely presented}

Another key result due to Makanin (see Theorem~\ref{thm:SpecialMakanin} above) is that the group of units of a finitely presented special monoid is a finitely presented group, with the same number of defining relations. 
Here we show  that the analogous result does not hold for finitely presented special inverse monoids.

\begin{thm}\label{thm_FP}
There exists an inverse monoid $M$ defined by a finite special inverse monoid presentation 
$M = \Ipres{A}{r_i = 1 \; (i \in I)}$
such that $M$ has non-finitely-presented group of units $G$. 
Moreover, such an example exists where all $r_i$ belong to $A^+$, and so are all cyclically reduced, and $M$ is E-unitary. 
\end{thm}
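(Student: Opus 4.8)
The plan is to obtain such a monoid from the general construction of Section~\ref{sec:Construction}, exactly as in the two preceding theorems, by feeding in a suitably chosen group $K_Q$ and finite set $W$. By Theorem~\ref{thm_GeneralConstructionOneSided}\ref{it:constiv} the group of units of $M_{Q,W}$ is $K_Q \ast H_{W'}$, and if $W=W^{-1}$ this is $K_Q \ast H_W$, where $H_W=\Ggen{W}\leq K_Q$. Since the finite presentation \eqref{eq:MQW} has $|I|$ defining relations (Theorem~\ref{thm_GeneralConstructionOneSided}\ref{it:constvi}) and $M_{Q,W}$ is automatically E-unitary (Theorem~\ref{thm_GeneralConstructionOneSided}\ref{it:consti}), the whole problem reduces to choosing a \emph{finitely presented} group $K_Q$ that contains a \emph{finitely generated but non-finitely-presented} subgroup $H$, and then taking $W$ to be a finite symmetric generating set of $H$.

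For the crucial non-finite-presentability I would invoke the standard fact that such groups exist: for instance one may take $K_Q=F_2\times F_2$, which is finitely presented, and let $H$ be the kernel of the homomorphism $F_2\times F_2\to\mathbb{Z}$ sending each free generator to $1$, which is finitely generated but not finitely presented. With $W=W^{-1}$ a finite generating set of $H$ we obtain $U(M_{Q,W})\cong K_Q\ast H$. To see this is not finitely presented, observe that $H$ is a retract of $K_Q\ast H$ via the projection that kills $K_Q$ and fixes $H$; since a retract of a finitely presented group is itself finitely presented (the analogue of the monoid fact used from \cite{WangPride2000} in the proof of Theorem~\ref{thm_GeneralConstructionOneSided}\ref{it:constv}), finite presentability of $K_Q\ast H$ would force $H$ to be finitely presented, a contradiction. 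This already establishes the first assertion of the theorem, together with E-unitarity.

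The main obstacle is the strengthened claim that the relators may be taken to lie in $A^+$ (and hence be cyclically reduced). This cannot be achieved by the construction as it stands: the relator $fr_1$ built from $f=e(\dots)$ freely reduces to $r_1$, so $f$ collapses to $1$ in the free group, and it is exactly this collapse that makes the maximal group image split as $K_Q\ast F_t$. A genuinely positive relator involving $t$ admits no cancellation, so $t$ would satisfy a nontrivial relation and the group-of-units computation of Theorem~\ref{thm_GeneralConstructionOneSided}\ref{it:constiv} would no longer apply directly. My plan is therefore to replace $f$ by a positive word over an enlarged alphabet that reproduces the same right-unit and unit structure, exploiting the fact used throughout Section~\ref{sec:Benois} and in the O'Hare computation that prefixes of a positive relator are automatically right invertible, so that the roles of $t$ and of the conjugators $tw_lt^{-1}$ can be encoded positively.

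One would then re-run the analysis behind Theorem~\ref{thm_GeneralConstructionOneSided} for the positive presentation: verify E-unitarity directly from cyclic reducedness, identify the submonoid of right units and its group of units as in parts \ref{it:constii} and \ref{it:constiv}, and confirm that the group of units is still a free product of $K_Q$ with a finitely generated non-finitely-presented group. I expect the genuinely delicate step to be this positivisation, namely showing that passing to a positive relator leaves the group of units unchanged while keeping $M$ finitely presented; once that is in place, the non-finite-presentability of the resulting free product is inherited verbatim from the retract argument above.
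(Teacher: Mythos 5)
Your first assertion is proved correctly, and by the same route as the paper: take $Q$, $W$ finite with $W=W^{-1}$ and $H_W\leq K_Q$ finitely generated but not finitely presented (the kernel of $F_2\times F_2\to\mathbb{Z}$ sending all generators to $1$ is a valid choice, and matches the paper's appeal to \cite{grunewald78}), then apply Theorem~\ref{thm_GeneralConstructionOneSided}\ref{it:constvi}, \ref{it:consti}, \ref{it:constiv} and conclude via the retract argument that $K_Q\ast H_W$ is not finitely presented. Up to this point your argument is essentially identical to the paper's proof.

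However, for the ``moreover'' clause your proposal has a genuine gap, and you acknowledge it yourself: the positivisation is left as an unproven plan (``replace $f$ by a positive word \dots re-run the analysis \dots I expect the genuinely delicate step to be this positivisation''). Moreover, the route you sketch is harder than what the statement requires: Theorem~\ref{thm_FP} asks only for a \emph{finite} special presentation with positive relators, not a one-relator one, so there is no need to build a positive analogue of $f$ or to re-derive any of the structural results of Theorem~\ref{thm_GeneralConstructionOneSided}. The paper's actual argument is a presentation manipulation on the \emph{same} monoid: pass to the equivalent multi-relation presentation \eqref{eq:MQWalt} (observation \ref{it:altpres}), then perform Tietze transformations introducing fresh generators $t'$ and $a'$ $(a\in A)$ with relations $aa'=1$, $a'a=1$, $tt'=1$, and replace every occurrence of $a^{-1}$ and $t^{-1}$ in the relators by $a'$ and $t'$. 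The point that makes this legitimate is that in any inverse monoid the single relation $xy=1$ already forces $y=x^{-1}$: from $xy=1$ one gets $xyx=x$ and $yxy=y$, so $y$ is a von Neumann inverse of $x$, and inverses in an inverse semigroup are unique. Hence the primed letters are forced to equal the inverses of the unprimed ones, the new presentation defines the same monoid $M_{Q,W}$, and every relator is now a word in $A^+$ over the enlarged alphabet; E-unitarity and the group of units are unchanged because the monoid is unchanged. Your worry that a positive relator involving $t$ would destroy the splitting $G_Q=K_Q\ast F_t$ evaporates under this manoeuvre, since in the maximal group image the relation $tt'=1$ simply eliminates $t'$ again. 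Without this (or some equivalent) mechanism, your proof of the second assertion is incomplete.
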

\begin{proof}
It is well known that there exist
 finitely presented groups which contain finitely generated subgroups that are not finitely presented; 
 for instance such examples can already be found in the direct product of two free groups
 \cite{grunewald78}. 
In the construction described in Section~\ref{sec:Construction},
choose $Q = \{r_i \::\: i \in I \}$ and $W = \{w_j \::\:  j \in J \}$ such that $I$ and $J$ are finite, 
$W=W^{-1}$, and such that the subgroup $H_W$ of $K_Q = \Gpres{A}{r_i=1 \; (i \in I)}$ generated by $W$ is not finitely presented.   
Then by Theorem~\ref{thm_GeneralConstructionOneSided} \ref{it:constvi} , \ref{it:consti}, \ref{it:constiv} it follows that $M_{Q,W}$ is an E-unitary finitely presented special inverse monoid with non-finitely presented group of units $U(M_{Q,W})$.   
Here $U(M_{Q,W}) \cong K_Q \ast H_W$ is not finitely presented since $H_W$ is not finitely presented and $H_W$ is a retract of $K_Q \ast H_W$.    
For the last part of the statement, we  start from the presentation \eqref{eq:MQWalt} for $M_{Q,W}$ 
\begin{multline*}
M_{Q,W} = \Ipres{A, t}{ 
r_i=1 \; (i \in I), 
\; a a^{-1} = 1, 
\; a^{-1} a = 1 \; (a \in A), \\ 
\; t w_j t^{-1} t w_j^{-1} t^{-1} = 1, \; 
t w_j^{-1} t^{-1} t w_j t^{-1} = 1 \; 
(j \in J)  
}. 
\end{multline*}
We the apply a sequence of Tietze transformations on this presentation: we introduce new generators
$t'$ and $A'=\{a'\::\: a\in A\}$, representing $t^{-1}$ and $\{ a^{-1}\::\: a\in A\}$, and then replace all occurrences of the inverses by these new generators, to obtain the following presentation for $M_{Q,W}$: 
\begin{multline*}
\Ipres{A, A', t, t'}{ 
\overline{r_i}=1 \; (i \in I), 
\; a a' = 1, 
\; a' a = 1 \; (a \in A),
\; t t' = 1, \\
\; t \overline{w_j} t' t \overline{w_j^{-1}} t' = 1, \; 
t \overline{w_j^{-1}} t' t \overline{w_j} t' = 1 \; 
(j \in J)  
},
\end{multline*}
where $\overline{v}$ is the word obtained from $v$ by replacing each $a^{-1}$ by $a'$ for $a \in A$. Then this is a finite special inverse presentation for the same monoid $M_{Q,W}$ but with the property that all the defining relators are now strictly positive words. 
\end{proof} 

The examples given by this theorem show just how far the Makanin's result given in Theorem \ref{thm:SpecialMakanin} is from being true for special inverse monoid presentations. 

As explained in the proof of Theorem~\ref{thm_FP} there are well-known examples of finitely presented groups with finitely generated non-finitely presented subgroups, and the parent group can even be chosen to be a very easy group, namely the direct product of two free groups. 
The simple nature of these examples makes the process of writing down concrete examples of inverse monoids satisfying the hypothesis of Theorem~\ref{thm_FP} quite straightforward. We do just this in the next example. 

\begin{example}\label{thm:anti-Mak} 
We shall now use the result from \cite{grunewald78} to write down a concrete example of a finitely presented special inverse monoid with non-finitely presented group of units. 
Let $K_Q$ be the direct product $FG(c_1,c_2) \times FG(d_1,d_2)$ of two free groups of rank two which has the presentation 
\[
\Gpres{c_1, c_2, d_1, d_2}{c_id_jc_i^{-1}d_j^{-1} = 1, \; 
(i,j \in \{1,2\})}. 
\] 
Let $\phi: FG(c_1,c_2) \rightarrow FG(t)$ be the surjective homomorphism mapping $c_1 \mapsto t$ and $c_2 \mapsto 1$, and   
let $\psi: FG(d_1,d_2) \rightarrow FG(t)$ be the surjective homomorphism mapping $d_1 \mapsto t$ and $d_2 \mapsto 1$.   
Let 
\[
H = \{ (u,v): \phi(u) =\psi( v) \} \leq K_Q. 
\]
Then it follows from 
Grunewald's result \cite{grunewald78} that $H$ is a finitely generated but non-finitely presented subgroup of $K_Q$.   
In particular a finite group generating set for $H$ is given by the set
$\{(c_1,d_1), (c_2,1), (1,d_2)\}$.  
In terms of the presentation above these  generators are represented by the words $c_1d_1$, $c_2$ and $d_2$ respectively. 
So if we set 
\[
W =
\{ c_1d_1, c_2, d_2 \} \cup \{ d_1^{-1}c_1^{-1}, c_2^{-1}, d_2^{-1} \}  
\]        
then $W = W^{-1}$, and $K_Q$ and $H_W$ satisfy the hypotheses in the proof of Theorem~\ref{thm_FP}.    

Using this example, the argument in the proof of 
Theorem~\ref{thm_FP} then shows that if $M$ is the 
finitely presented special inverse monoid defined by the following finite presentation  
\begin{align*}
\mathrm{Inv}\lb c_1, c_2, d_1 d_2, t, C_1, C_2, D_1, D_2, T \mid 
& c_i C_i = 1, \; C_i c_i = 1, & &  (i \in \{1,2\}) \\ 
& d_i D_i = 1, \; D_i d_i = 1 & & (i \in \{1,2\}) \\
& tT=1, \\
& c_id_jC_iD_j=1, & & (i, j \in \{1,2\}), \\
& t c_2 T t C_2 T = 1, 
 & & t C_2 T t c_2 T = 1, \\
& t d_2 T t D_2 T = 1, 
& &  t D_2 T t d_2 T = 1, \\
& t c_1 d_1 T t D_1 C_1 T = 1, 
& &  t D_1 C_1 T t c_1 d_1 T = 1 \rb
  \end{align*}
then the group of units of $M$ is not finitely presented.  
Note that all the relators in this finite presentation are positive words.  
\end{example}

\subsection{Group of units finitely presented implies coherence}

Recall that a finitely presented group $G$ is said to be \emph{coherent} if every finitely generated subgroup of $G$ is finitely presented. 
A well-known question of Baumslag 
\cite[page 76]{Baumslag73}
asks whether every one-relator group is coherent. 
This question remains open in general but it has recently been shown by Louder and Wilton \cite{LW2018}, and independently by Wise \cite{Wise20}, that all one-relator groups with torsion are coherent. 

Given that above we have shown in Theorem~\ref{thm:notOneRelUnits} that the group of units of a special one-relator inverse monoid need not be a one-relator group, and we have shown in Theorem~\ref{thm_FP} that there are finitely presented special inverse monoids with non-finitely presented groups of units, it is natural to ask whether the group of units of a  special one-relator inverse monoid must be finitely presented. 

This question remains open, but in the subsection we present some results which show a close connection between this question and Baumslag's  open problem.  

\begin{thm}\label{thm_coho}
If all one-relator special inverse monoids 
$M = \Ipres{A}{r = 1}$ have finitely presented groups of units then all one-relator groups are coherent. 
In fact, if all E-unitary one-relator special inverse monoids $M = \Ipres{A}{r = 1}$ have finitely presented groups of units then all one-relator groups are coherent. 
\end{thm}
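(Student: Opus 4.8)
The plan is to turn the coherence question into an instance of the construction from Section~\ref{sec:Construction}, exploiting the fact that the group of units of $M_{Q,W}$ realises an arbitrary finitely generated subgroup of $K_Q$ as a free factor. Since every finitely generated subgroup $H$ of a one-relator group $K$ is thus a \emph{retract} of a group $K \ast H$ which the hypothesis forces to be finitely presented, coherence will follow from the standard fact that retracts of finitely presented groups are finitely presented.

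Concretely, I would argue as follows. Let $K=\Gpres{A}{r=1}$ be an arbitrary one-relator group and let $H\leq K$ be an arbitrary finitely generated subgroup; the goal is to show $H$ is finitely presented. First I would set $Q=\{r\}$, so that $K_Q=K$ is exactly this one-relator group and $I=\{1\}$ is a singleton. Since $H$ is finitely generated, I would choose a finite \emph{symmetric} generating set $W=W^{-1}$ for $H$ (taking any finite generating set together with the inverses of its elements), so that $H_W=H$. Feeding $Q$ and $W$ into the construction produces the special inverse monoid $M_{Q,W}$. By Theorem~\ref{thm_GeneralConstructionOneSided}\ref{it:constvi}, this presentation has the same number of defining relations as $\Gpres{A}{r=1}$, namely one, so $M_{Q,W}$ is a \emph{one-relator} special inverse monoid; by Theorem~\ref{thm_GeneralConstructionOneSided}\ref{it:consti} it is moreover E-unitary. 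By Theorem~\ref{thm_GeneralConstructionOneSided}\ref{it:constiv}, together with the observation that $W=W^{-1}$ forces $W'=W$, its group of units satisfies
\[
U(M_{Q,W}) \cong K_Q \ast H_W = K \ast H.
\]

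Now I would invoke the hypothesis: since $M_{Q,W}$ is an E-unitary one-relator special inverse monoid, its group of units $K\ast H$ is finitely presented. The final step is to observe that $H$ is a retract of $K\ast H$ via the homomorphism $K\ast H\to H$ which is the identity on $H$ and trivial on $K$ (well defined by the universal property of the free product), whose composition with the inclusion $H\hookrightarrow K\ast H$ is the identity on $H$. As a retract of a finitely presented group, $H$ is finitely presented (this is the group-theoretic counterpart of the fact already used in the proof of Theorem~\ref{thm_GeneralConstructionOneSided}\ref{it:constv}). Since $H$ and $K$ were arbitrary, every one-relator group is coherent. The ``in fact'' strengthening is automatic: the monoids $M_{Q,W}$ produced above are always E-unitary, so only the E-unitary form of the hypothesis is ever used.

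I do not anticipate a serious obstacle here, as all the substantive work is carried by Theorem~\ref{thm_GeneralConstructionOneSided}; the points requiring care are purely bookkeeping, namely ensuring that taking $|I|=1$ really yields a one-relator inverse monoid (guaranteed by part \ref{it:constvi}) and that the generating set $W$ is chosen symmetric so that part \ref{it:constiv} delivers the full free product $K\ast H$ rather than a free product with a possibly smaller subgroup. The only genuinely external ingredient is the standard retract fact, which I would state and apply without reproving.
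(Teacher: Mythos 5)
Your proof is correct and is essentially the paper's own argument: the paper proves the contrapositive (starting from a non-coherent one-relator group $K$ and a symmetric finite $W$ generating a non-finitely presented subgroup, it applies Theorem~\ref{thm_GeneralConstructionOneSided}\ref{it:constvi}, \ref{it:consti}, \ref{it:constiv} to obtain a one-relator E-unitary $M_{Q,W}$ with non-finitely presented units $K\ast H_W$), whereas you run the same construction in the direct direction and make explicit the retract fact, which the paper uses in the same way in the proof of Theorem~\ref{thm_FP}. The two formulations are logically identical, so there is nothing to add.
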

\begin{proof}
Suppose there is a one-relator group 
$G = \Gpres{A}{r=1}$ 
which is not coherent. Set $K_Q = G$ and let $W=A^{-1}$ be a finite subset of $F_A$ such that the subgroup $H_W$ of $G$ generated by $W$ is not finitely presented. 
Then  Theorem~\ref{thm_GeneralConstructionOneSided}  \ref{it:constvi}, \ref{it:consti}, \ref{it:constiv} imply that $M_{Q,W}$ is a one-relator E-unitary special inverse monoid with non-finitely presented group of units. 
\end{proof}

Since the group of units of an E-unitary special inverse monoid embeds in the maximal group image, we obtain both directions in that case.  

\begin{thm}\label{thm:coher2}
The following are equivalent: 
\begin{enumerate}
\item All one relator groups are coherent. 
\item Every E-unitary one-relator special inverse monoid $M = \Ipres{A}{r = 1}$ has a finitely presented group of units. 
\end{enumerate}
\end{thm}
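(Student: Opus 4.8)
The plan is to prove the two implications separately, observing that one of them is already established. The implication $(2)\Rightarrow(1)$ is precisely the content of the second (stronger) sentence of Theorem~\ref{thm_coho}: if every E-unitary one-relator special inverse monoid has finitely presented group of units, then all one-relator groups are coherent. So for this direction I would simply invoke Theorem~\ref{thm_coho}; nothing new is needed. The remaining task is the reverse implication $(1)\Rightarrow(2)$.

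For $(1)\Rightarrow(2)$ I would take an arbitrary E-unitary one-relator special inverse monoid $M=\Ipres{A}{r=1}$ and show its group of units $U=U(M)$ is finitely presented, assuming all one-relator groups are coherent. First I would identify the ambient group: the maximal group image of $M$ is the one-relator group $G=\Gpres{A}{r=1}$, with canonical homomorphism $\theta:M\rightarrow G$. Next I would exploit E-unitarity exactly as in the proof of Theorem~\ref{thm_GeneralConstructionOneSided}\ref{it:constii}: by \cite[Lemma~1.5]{margolis93}, $\theta$ embeds each $\mathscr{R}$-class of $M$ into $G$. Since the group of units $U$ is contained in the $\mathscr{R}$-class of $1_M$ (the right units), the restriction $\theta\restr_U$ is an injective monoid homomorphism; as $U$ is a group, $\theta(U)$ is a subgroup of $G$ and $\theta\restr_U:U\rightarrow\theta(U)$ is a group isomorphism.

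The third step is to check that $U$ is finitely generated. By Theorem~\ref{IMM:gens}, $U$ is generated by the minimal invertible pieces of the relators; for the single relator $r$ there are only finitely many such pieces $r_1,\dots,r_k$, and hence $\theta(U)$ is a finitely generated subgroup of the one-relator group $G$. Finally I would invoke the hypothesis: since $(1)$ asserts that all one-relator groups are coherent, $G$ in particular is coherent, so every finitely generated subgroup of $G$ is finitely presented. Thus $\theta(U)$ is finitely presented, and therefore so is $U\cong\theta(U)$, completing this direction.

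I do not anticipate a serious obstacle, since every ingredient is already available earlier in the paper: the maximal group image being one-relator, the E-unitary embedding of $\mathscr{R}$-classes, and the finite generation of the group of units. The only points requiring a little care are confirming that $U$ sits inside a single $\mathscr{R}$-class (so the embedding genuinely applies to it) and that Theorem~\ref{IMM:gens} really does yield a \emph{finite} generating set, which it does because a single finite-length relator admits only finitely many minimal invertible pieces.
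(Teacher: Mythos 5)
Your proposal is correct and follows essentially the same route as the paper: the direction $(2)\Rightarrow(1)$ is quoted from Theorem~\ref{thm_coho}, and for $(1)\Rightarrow(2)$ the paper likewise combines the embedding of the group of units into the maximal group image (which is where E-unitarity is used, as noted just before the theorem statement) with finite generation from Theorem~\ref{IMM:gens} and coherence of the one-relator group $G=\Gpres{A}{r=1}$. Your extra care in justifying the embedding via \cite[Lemma~1.5]{margolis93} applied to the $\mathscr{R}$-class of $1_M$, and in confirming that a single relator yields finitely many minimal invertible pieces, simply makes explicit steps the paper leaves implicit.
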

\begin{proof}
(ii) $\Rightarrow$ (i) This follows from Theorem~\ref{thm_coho}.

(i) $\Rightarrow$ (ii) Suppose all one-relator groups are coherent. 
Then 
by Theorem~\ref{IMM:gens}
given any E-unitary one-relator special inverse monoid $M = \Ipres{A}{r = 1}$ its group of units $U(M)$ is isomorphic to a 
finitely generated 
subgroup of 
the maximal group image $G = \Gpres{A}{r=1}$ which is coherent, hence $U(M)$ is finitely presented.  
\end{proof} 

Combining the argument of  
(i) $\Rightarrow$ (ii) from above
with the  
recent result of Louder and Wilton \cite{LW2018} and independently Wise \cite{Wise20} that one-relator groups with torsion are coherent, we obtain the following positive result. 

\begin{thm}
Let $M = \Ipres{A}{r^m=1}$ where $m>1$, $r \in \overline{A}^*$ and $M$ is E-unitary (in particular this is true if $r$ is a cyclically reduced word).   
Then the group of units $G$ of $M$ is finitely presented. 
\end{thm}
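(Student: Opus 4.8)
The plan is to run exactly the argument used for the implication (i)$\Rightarrow$(ii) of Theorem~\ref{thm:coher2}, the only genuinely new ingredient being that here the maximal group image is automatically coherent. Recall that the maximal group image of $M=\Ipres{A}{r^m=1}$ is the one-relator group $G=\Gpres{A}{r^m=1}$. I would first show that $G$ is coherent, then show that $U(M)$ is a finitely generated subgroup of $G$; being a finitely generated subgroup of a coherent group, $U(M)$ is then finitely presented, which is the desired conclusion. (Note that the $G$ in the statement denotes the group of units $U(M)$, which we are identifying with its image in the maximal group image, also called $G$ in the proof of Theorem~\ref{thm:coher2}; I will keep the two roles notationally distinct below.)

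To see that the maximal group image $\Gpres{A}{r^m=1}$ is coherent, I would pass to cyclically reduced form. If $\red(r)$ is the empty word $\epsilon$, then $r^m=1$ is a trivial relation in $F_A$, so the maximal group image is the free group $F_A$, which is coherent. Otherwise a cyclic conjugate of $\red(r)$ has the form $v^k$ with $v$ cyclically reduced and not a proper power and $k\geq 1$; since the normal closure of the relator is unchanged under cyclic conjugation, the maximal group image is isomorphic to $\Gpres{A}{v^{km}=1}$. Here the exponent $km\geq m>1$, so this is a one-relator group whose relator is a proper power, that is, a one-relator group \emph{with torsion}. By the theorem of Louder and Wilton \cite{LW2018}, and independently Wise \cite{Wise20}, every one-relator group with torsion is coherent, so the maximal group image is coherent in all cases.

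It then remains to place $U(M)$ inside the maximal group image as a finitely generated subgroup. Finite generation is immediate from Theorem~\ref{IMM:gens}: the word $r^m$ has only finitely many minimal invertible pieces, and these generate $U(M)$. For the embedding I would use E-unitarity exactly as in the proof of Theorem~\ref{thm_GeneralConstructionOneSided}\ref{it:constii}: by \cite[Lemma~1.5]{margolis93} the canonical map from $M$ onto its maximal group image embeds each $\mathscr{R}$-class of $M$, and since $U(M)$ is contained in the $\mathscr{R}$-class of $1_M$ (the right units), this map restricts to an embedding of $U(M)$ into the maximal group image. Equivalently, for E-unitary $M$ the preimage of the identity is the set of idempotents $E(M)$, and the only idempotent unit is $1_M$, so the restriction to $U(M)$ has trivial kernel. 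Combining the three facts — coherence of the maximal group image, finite generation of $U(M)$, and the embedding $U(M)\hookrightarrow$ maximal group image — finishes the proof.

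There is essentially no serious obstacle once coherence of one-relator groups with torsion is taken as an external input; the only point needing care is verifying that $\Gpres{A}{r^m=1}$ genuinely has torsion, which is why the degenerate case $\red(r)\equiv\epsilon$ must be separated out and why one passes to the cyclically reduced proper-power form $v^{km}$ before invoking the coherence theorem. The hypothesis that $M$ is E-unitary is precisely what licenses the embedding $U(M)\hookrightarrow$ maximal group image; as remarked elsewhere in the paper, it holds automatically when $r$ is cyclically reduced, which justifies the parenthetical in the statement.
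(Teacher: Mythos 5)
Your proof is correct and takes essentially the same route as the paper, whose proof consists precisely of combining the argument of (i)$\Rightarrow$(ii) in Theorem~\ref{thm:coher2} (E-unitarity embeds the group of units, finitely generated by the minimal invertible pieces via Theorem~\ref{IMM:gens}, into the maximal group image) with the Louder--Wilton/Wise theorem that one-relator groups with torsion are coherent. Your additional care in separating the degenerate case $\red(r)\equiv\epsilon$ and passing to the cyclically reduced proper-power form $v^{km}$ to certify that $\Gpres{A}{r^m=1}$ genuinely has torsion addresses a point the paper leaves implicit, and is a worthwhile refinement rather than a departure.
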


\section{Concluding remarks and open problems}

In this section we list some open problems which naturally arise from the work done in this article.  

\begin{question}\label{question_1}
Is the group of units of a one-relator inverse monoid $\Ipres{A}{r=1}$ finitely presented?
How about a torsion one-relator inverse monoid $\Ipres{A}{r^k=1}$ ($k>1$)?
\end{question}

\begin{question}
Do the minimal invertible pieces for $\Ipres{A}{r^k=1}$ and
$\Ipres{A}{r=1}$ coincide?
\end{question}

The analogous statement for monoid presentations is true as a consequence of Adjan overlap algorithm.
For inverse monoids, it is certainly true that the minimal pieces of $\Ipres{A}{r^k=1}$ are invertible in
$\Ipres{A}{r=1}$, but it is not clear that the converse is true.

\begin{question}
Is the group of units of a one-relator inverse monoid $\Ipres{A}{r=1}$ embeddable into a one-relator group?
\end{question}

Related to this we have the following question. 

\begin{question}
Does the group of units of a one-relator inverse monoid $\Ipres{A}{r=1}$ have a soluble word problem?
\end{question}

\begin{question}
Is every finitely generated subgroup of a one-relator group the group of units of some one-relator inverse monoid?
\end{question}

\begin{question}
Is every finitely generated, recursively presented group the group of units of a finitely presented special inverse monoid?
\end{question}

\begin{question}
The special one-relator inverse monoid counterexamples in  
Section~\ref{sec:applications}
are all of the form 
$\Ipres{A}{r=1}$ where $r$ is not a reduced word.   
If we add the hypothesis that $r$ is reduced, or cyclically reduced, is it still possible to construct counterexamples? For example, it is still open whether for every cyclically reduced word $r$ the group of units of    
$\Ipres{A}{r=1}$ 
is a one-relator group (although we suspect that it is not true). 
\end{question}

\textbf{Added in proof.} 
It has recently been announced in \cite{JZLinton23} that all one-relator groups are coherent. Equivalently, by Theorem~\ref{thm:coher2} above, that result shows that every E-unitary one-relator special inverse monoid has a finitely presented group of units. The corresponding question in the non E-unitary case remains open; see Question~\ref{question_1} above.  

\smallskip

\textbf{Competing interests declaration:} The authors declare none.


\begin{thebibliography}{99}

\bibitem{adjan}
S.~I. Adjan.
\newblock Defining relations and algorithmic problems for groups and
  semigroups.
\newblock Trudy Mat. Inst. Steklov., 85:123, 1966.

\bibitem{adog78}
S.I. Adjan, G.U. Oganesjan, On problems of equality and divisibility in semigroups with a single defining relation (Russian), Izv. Akad. Nauk SSSR Ser. Mat. 42 (1978), 219--225, 469.

\bibitem{Baumslag73}
G. Baumslag.
\newblock Some problems on one-relator groups, Proceedings
of the Second International Conference on the Theory of Groups
\newblock Australian Nat. Univ., Canberra, (Berlin) 1973. 

\bibitem{brown94}
K.S. Brown, {\it Cohomology of Groups}, Graduate Texts in Mathematics 87 (Springer-Verlag, 1994).

\bibitem{campbell96}
C.M. Campbell, E.F. Robertson, N. Ru\v{s}kuc, R.M. Thomas, On subsemigroups of finitely presented semigroups,
J. Algebra 180:1--21, 1996. 

\bibitem{gray:undecidable}
R.D. Gray, Undecidability of the word problem for one-relator inverse monoid via right-angled Artin subgroups of one-relator groups,
Invent. Math. 219:987--1008, 2020. 


\bibitem{GrayDolinka2021}
R.D. Gray, I. Dolinka, 
New results on the prefix membership problem for one-relator groups,  
Trans. Amer. Math. Soc. 374:4309--4358, 2021.


\bibitem{GRBoundaries2011}
R.D. Gray, N. Ru\v{s}kuc,
Generators and relations for subsemigroups via boundaries in Cayley graphs,
J. Pure Appl. Algebra, 215:2761--2779, 2011,

\bibitem{GraySilvaSzakacs}
R.D. Gray,  P. Silva, N. Szakacs. 
 Algorithmic properties of inverse monoids with hyperbolic and tree-like Sch\"{u}tzenberger graphs,  
J. Algebra 611:651--687, 2022.


\bibitem{grunewald78}
F.J. Grunewald, On some groups which cannot be finitely presented,
J. London Math. Soc. 17:427--436, 1978.

\bibitem{HiltonAndStambach}
P. Hilton, U. Stammbach, {\it A Course in Homological Algebra,} Graduate Texts in Mathematics 4 (Springer-Verlag, 1971). 

\bibitem{imm01}
S.V. Ivanov, S.W. Margolis, J.C. Meakin,
On one-relator inverse monoids and one-relator groups,
J. Pure Appl. Algebra 159:83--111, 2001. 

\bibitem{JZLinton23}
A. Jaikin-Zapirain, M. Linton,
On the coherence of one-relator groups and their group algebras,
arXiv:2303.05976.

\bibitem{Lallement1974}
G.~Lallement.
On monoids presented by a single relation,
J. Algebra 32:370--388, 1974.


\bibitem{Lohrey15}
M.~Lohrey.
\newblock The rational subset membership problem for groups: a survey.
\newblock In {\em Groups {S}t {A}ndrews 2013}, volume 422 of {\em London Math.
  Soc. Lecture Note Ser.}, pages 368--389. Cambridge Univ. Press, Cambridge,
  2015.

\bibitem{LW2018}
L. Louder, H. Wilton, One-relator groups with torsion are coherent, 
Math. Res. Lett. 27:1499--1511, 2020.

\bibitem{lyndon50}
R.C. Lyndon, Cohomology theory of groups with a single defining relation, Ann. of Math. 52:650--665, 1950.


\bibitem{lyndonschupp}
R.C. Lyndon,  P.E. Schupp, {\it Combinatorial Group Theory} (Springer-Verlag,  2001).


\bibitem{mks}
W. Magnus, A. Karrass, D. Solitar, {\it Combinatorial Group Theory}
(Wiley, 1966).

\bibitem{makanin}
G.S. Makanin, On the identity problem in finitely defined semigroups, Dokl. Akad. Nauk SSSR 
171:285--287, 1966.


\bibitem{margolis93}
S.W. Margolis, J.C. Meakin, Inverse monoids, trees and context-free languages,
Trans. Amer. Math. Soc. 335:259--276, 1993.


\bibitem{OHarePaper}
S.W. Margolis, J.C. Meakin, J.B. Stephen,
Some decision problems for inverse monoid presentations, Semigroups and their applications (Chico, Calif., 1986), 99--110, Reidel, Dordrecht, 1987. 

\bibitem{Narendran91}
P. Narendran, C. \'{O}'D\'{u}nlang and F. Otto, It is undecidable whether a finite special string-writing systems presents a group, Discrete Math. 98:153--159, 1991.

\bibitem{Pride76}
S.J. Pride, 
Certain subgroups of certain one-relator groups,
Math Z. 146:1--6, 1976.

\bibitem{Pride77}
S.J. Pride, The two-generator subgroups of one-relator groups with torsion,
 Trans. Amer. Math. Soc. 234:483--496, 1977.
 
\bibitem{stephen90}
J.B. Stephen,
Presentations of inverse monoids,
J. Pure Appl. Algebra 63:81--112, 1990.

\bibitem{WangPride2000}
X. Wang, S. J. Pride, 
Second order Dehn functions of groups and monoids,
Internat. J. Algebra Comput. 10(04):425--456, 2000

\bibitem{weibel94}
 C.A. Weibel, {\it An Introduction to Homological Algebra}, Cambridge Studies in Advanced Mathematics 38 (Cambridge University Press,  1994).

\bibitem{Wise20}
D.T. Wise,
Coherence, local indicability and nonpositive immersions,
J. Inst. Math. Jussieu 21:659--674, 2022.

 
\bibitem{zhang:rewriting}
L. Zhang,
Applying rewriting methods to special monoids,
Math. Proc. Cambridge Philos. Soc. 112:495--505, 1992. 

\bibitem{zhang:short}
L. Zhang,
A short proof of a theorem of Adjan,
Proc. Amer. Math. Soc. 116:1--3, 1992. 

\end{thebibliography}
\end{document}